\newcommand{\Q}{\mathbb{Q}}
\newcommand{\R}{\mathbb{R}}
\newcommand{\C}{\mathbb{C}}
\newcommand{\Z}{\mathbb{Z}}
\newcommand{\N}{\mathbb{N}}
\newcommand{\CP}{\C\mathrm{P}}
\newcommand{\Bl}{\mathrm{Bl}}
\newcommand{\smooth}{{\mathcal{C}^\infty}}
\newcommand{\im}{\mathrm{Im}}
\newcommand{\Gr}{\mathrm{Gr}}
\renewcommand{\d}{\mathrm{d}} % differential
\renewcommand{\div}{\mathrm{div}} % divisor
\newcommand*{\rom}[1]{\expandafter\@slowromancap\romannumeral #1@}
\DeclareMathOperator{\Gor}{Gor}
\DeclareMathOperator{\Kah}{K\ddot{a}h}
\DeclareMathOperator{\Var}{Var}
\DeclareMathOperator{\Hdg}{Hdg}
\DeclareMathOperator{\VHS}{VHS}
\DeclareMathOperator{\MHM}{MHM}
\newcommand{\taub}{\tau_d^\mathrm{bir}}
\newtheorem{prop}{Proposition}[section]
\newtheorem{thm}[prop]{Theorem}
\newtheorem{lemme}[prop]{Lemma}
\newtheorem{conj}[prop]{Conjecture}
\newtheorem{alphatheorem}{Theorem}
\theoremstyle{definition}
\newtheorem{defn}[prop]{Definition}
\newtheorem{ex}[prop]{Examples}
\theoremstyle{remark}
\newtheorem{rem}[prop]{Remark}
\numberwithin{equation}{section}
\providecommand{\keywords}[1]
{
	\small	
	\textbf{\textit{Keywords:}} #1
}
\title{Motivic integration \\
and birational invariance of BCOV invariants}
\author{Lie Fu and Yeping Zhang}
\date{}
\begin{document}
\maketitle
\date{}
\begin{abstract}
Bershadsky, Cecotti, Ooguri and Vafa constructed a real valued invariant for Calabi--Yau manifolds,
which is now called the BCOV torsion. Based on it, a metric-independent invariant, called BCOV invariant, was constructed by Fang--Lu--Yoshikawa and Eriksson--Freixas i Montplet--Mourougane.
The BCOV invariant is conjecturally related to the Gromov--Witten theory via mirror symmetry.
Based upon previous work of the second author, we prove the conjecture that birational Calabi--Yau manifolds have the same BCOV invariant.
We also extend the construction of the BCOV invariant
to Calabi--Yau varieties with Kawamata log terminal singularities, and prove its birational invariance for Calabi--Yau varieties with canonical singularities.
We provide an interpretation of our construction using the theory of motivic integration.
\end{abstract}

\keywords{Analytic torsion, Calabi--Yau manifolds, motivic integration, mirror symmetry.}

\thanks{\textbf{MSC 2020}: 14J32, 58J52, 14E18, 14J33, 14J17, 32Q25.}

\setcounter{tocdepth}{1}
\tableofcontents

\section{Introduction}

\subsection{Background: mirror symmetry}

BCOV torsion, introduced by Bershadsky, Cecotti, Ooguri, and Vafa in the outstanding papers \cite{bcov,bcov2},  is a real valued invariant for Calabi--Yau manifolds equipped with Ricci-flat metrics \cite{Yau78}.
More precisely, let $X$ be a Calabi--Yau manifold, i.e., a compact K\"ahler manifold with trivial canonical bundle,
and let $\omega$ be a Ricci-flat K\"ahler metric,
the \emph{BCOV torsion} of $(X,\omega)$ is the weighted product
\begin{equation}
\label{eq-intro-BCOVTorsion}
\mathcal{T}_{\operatorname{BCOV}}(X, \omega):=\prod_{p=1}^{\dim X} \mathcal{T}_p^{(-1)^pp} \;,
\end{equation}
where $\mathcal{T}_p$ is the analytic torsion,
introduced by Ray--Singer \cite{rs2},
of the $p$-th exterior power of the holomorphic cotangent bundle $\bigwedge^{p}(T^*{X})$ equipped with the induced metric.

The motivation of
Bershadsky--Cecotti--Ooguri--Vafa \cite{bcov, bcov2} comes from string theory and has impact on mirror symmetry,
which predicts that for a family of Calabi--Yau manifolds, there is another family of Calabi--Yau manifolds with maximal degeneration, called the \textit{mirror} family,
such that the symplectic geometry (e.g.~Gromov--Witten invariants) of the first family, called the A-model,
is ``equivalent'' to the complex geometry (e.g.~variation of Hodge structures) of the mirror family, called the B-model.
Candelas--de la Ossa--Green--Parkes \cite{cdgp} conjectured a precise relation
between the potential ($J$-function) of the genus zero Gromov--Witten invariants of quintic threefolds (A-model)
and the potential ($I$-function) of the Yukawa coupling for the quintic mirror family (B-model).
Such a relation is expected to hold in general for mirror Calabi--Yau families (see \cite{MorrisonJAMS93}) and gives
surprising predictions in enumerative geometry. This genus zero mirror symmetry conjecture
was proved by Givental \cite{GiventalIMRN, Givental98} and Lian--Liu--Yau \cite{LianLiuYau} for a large class of examples including the original case of quintic threefolds.
Bershadsky--Cecotti--Ooguri--Vafa \cite{bcov,bcov2} computed certain invariants on the B-model that conjecturally correspond to \emph{higher genus} Gromov--Witten invariants.
This allows them to put forth conjectural formulas for all genus Gromov--Witten invariants of quintic threefolds.
The genus one part of this conjecture was proved by Zinger \cite{Zinger08, ZingerJAMS} in the broader setting of Calabi--Yau hypersurfaces in projective spaces. A lot of progress has been made recently on the study of Gromov--Witten invariants of genus $\geqslant 2$
(see \cite{CLLL15, CLLL16, GuoJandaRuan17, GuoJandaRuan18, NMSP1, NMSP2, NMSP3, ChenJandaRuan19} and references therein).
Despite the recent increasing interest on A-model invariants in genus $\geqslant 2$, the research on B-model invariants is currently focused on the genus one theory and
the particular case of Bershadsky--Cecotti--Ooguri--Vafa's B-model invariant corresponding to the genus one Gromov--Witten invariant is the aforementioned BCOV torsion \eqref{eq-intro-BCOVTorsion}.

The central object in this paper is the following normalization of the BCOV torsion,
called the \emph{BCOV invariant}.
Let $X$ be an $n$-dimensional Calabi--Yau manifold equipped with a Ricci-flat metric of K\"ahler form $\omega$,
its BCOV invariant  \cite{fly, efm} is defined\footnote{Here we use the definition of \cite{efm}, which differs from the one in \cite{fly} by an explicit power of $2\pi$.} by
\begin{equation}
\label{eq-intro-T}
\mathcal{T}(X):=\mathcal{T}_{\operatorname{BCOV}}(X, \omega)
\left(\prod_{k=1}^n\operatorname{covol}_{L^2}\left(H^k(X, \Z),\omega\right)^{(-1)^kk}\right)\left(\big(2\pi\big)^{-n}\int_X\frac{\omega^n}{n!}\right)^{\frac{\chi(X)}{12}},
\end{equation}
where $\chi(X)$ is the topological Euler characteristic of $X$,
and $\operatorname{covol}_{L^2}\left(H^k(X, \Z)\right)$ is
the covolume of the lattice $\im\left(H^k(X, \Z)\to  H^k(X, \R)\right)$  with respect to the $L^2$-metric induced by $\omega$.
The virtue of the BCOV invariant is that it depends only on the complex structure, but not on the K\"ahler metric.
Fang--Lu--Yoshikawa \cite{fly}
constructed the BCOV invariant for strict Calabi--Yau threefolds
and studied its asymptotic behavior along degenerations.
%Their work confirmed the B-model genus one mirror symmetry conjecture (see \cite[Conjecture 1.2 (B)]{fly}) proposed by Bershadsky--Cecotti--Ooguri--Vafa \cite{bcov,bcov2} for the family of quintic threefolds.
%Eriksson--Freixas i Montplet--Mourougane \cite{efm}
%generalized the construction as well as the asymptotic study of the BCOV invariant to Calabi--Yau manifolds of arbitrary dimension. They proved  in \cite{efm2} the B-model genus one mirror symmetry conjecture for Calabi--Yau hypersurfaces in projective spaces and showed the compatibility with Zinger's result on the A-model \cite{Zinger08, ZingerJAMS}.
Their work confirmed the conjectural formula of Bershadsky--Cecotti--Ooguri--Vafa \cite{bcov,bcov2} for the BCOV invariant near the large complex structure limit of the quintic mirror family (see \cite[Conjecture 1.2 (B)]{fly}).
Eriksson--Freixas i Montplet--Mourougane \cite{efm}
generalized the construction as well as the asymptotic study of the BCOV invariant to Calabi--Yau manifolds of arbitrary dimension. They proved  in \cite{efm2}
a higher-dimensional generalization of the conjectured formula mentioned above,  and showed the compatibility with Zinger's result on the A-model \cite{Zinger08, ZingerJAMS}, thus completing the genus one mirror symmetry conjecture of Bershadsky--Cecotti--Ooguri--Vafa \cite{bcov,bcov2} in this case.

\medskip

Throughout this paper,
for an $n$-dimensional Calabi--Yau manifold $X$,
we will use the following ``normalized logarithmic BCOV invariant'':
\begin{equation}
\label{eq-tau-Tau}
\tau(X) = \log\mathcal{T}(X) + \frac{\log(2\pi)}{2} \sum_{k=0}^{2n} (-1)^kk(k-n)b_k(X) \;,
\end{equation}
where $b_k(X)$ is the $k$-th Betti number of $X$.
This normalization comes from \cite[(0.13)]{z2}.

\subsection{Birational invariance conjecture}
As two birationally isomorphic Calabi--Yau varieties share the same mirror,
their BCOV invariants should coincide.
This leads to the following conjecture.

\begin{conj}
\label{conj-BirInv}
For birational Calabi--Yau manifolds $X$ and $X'$,
we have $\tau(X)=\tau(X')$.
\end{conj}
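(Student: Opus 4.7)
The plan is to compare $\tau(X)$ and $\tau(X')$ indirectly, by passing to a common log resolution and introducing a logarithmic BCOV invariant attached to a simple normal crossings pair. The guiding observation is that, since $K_X$ and $K_{X'}$ are both trivial, the discrepancy divisors of any common resolution of $X$ and $X'$ must coincide. Concretely, by Hironaka I would choose a smooth projective variety $Y$ together with birational morphisms $\pi \colon Y \to X$ and $\pi' \colon Y \to X'$ whose exceptional loci form a simple normal crossings divisor, and then the canonical bundle formulas
\[
K_Y \;=\; \pi^* K_X + D \;=\; D, \qquad K_Y \;=\; (\pi')^* K_{X'} + D' \;=\; D'
\]
immediately give $D = D' = K_Y$ as effective integral divisors on $Y$. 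Thus the discrepancy data are intrinsic to $Y$ and do not depend on which side of the birational map one regards as the base.

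The key analytic step is to attach to every log smooth pair $(Y, E)$, with $E$ an effective integral divisor, a real number $\tau(Y, E)$, built from Quillen-type torsions of $\bigwedge^{p}\Omega_Y^{1}(\log E_{\mathrm{red}})$ together with lattice and volume corrections mirroring those in \eqref{eq-intro-T}, and then to establish the comparison identity $\tau(X) = \tau(Y, D)$ for any log resolution $\pi \colon Y \to X$ of a Calabi--Yau manifold. Applied to both resolutions above and combined with $D = D'$ from the first paragraph, this comparison yields at once
\[
\tau(X) \;=\; \tau(Y, D) \;=\; \tau(Y, D') \;=\; \tau(X'),
\]
which is exactly the statement of the conjecture.

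The main obstacle is the proof of the comparison identity $\tau(X) = \tau(Y, D)$ itself. By weak factorization it suffices to handle a single smooth blow-up inside a tower of log-crepant modifications, which reduces the problem to a very precise blow-up formula for the BCOV invariant. Such a formula must combine the Bismut--Lebeau immersion formula, the Bismut--Ma submersion formula for Quillen metrics, and a delicate match between the analytic defect produced at each exceptional stratum and the corresponding discrepancy coefficient; the needed analytic input is essentially what the second author developed in earlier work. The motivic integration framework introduced in the second half of the paper supplies the right conceptual language for this match: the identity $\tau(X) = \tau(Y, D)$ takes the form of a Kontsevich-type change of variables formula in a suitable Grothendieck-group-valued invariant, so that birational invariance follows essentially tautologically once the logarithmic invariant is known to be independent of the chosen log resolution, and this viewpoint moreover opens the door to extending the construction of $\tau$ to Kawamata log terminal Calabi--Yau varieties.
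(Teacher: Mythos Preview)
Your overall strategy---pass to a resolution, attach an invariant to the pair $(Y,D)$ with $D$ the discrepancy divisor, and prove it agrees with $\tau$ of the base---matches the paper's. But there is a genuine gap in how you expect the comparison identity $\tau(X)=\tau(Y,D)$ to come about, and your proposed construction of $\tau(Y,D)$ is not the one that works.

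First, the paper's pair invariant $\tau_d(X,\gamma)$ is \emph{not} built from torsions of $\bigwedge^{p}\Omega_Y^{1}(\log E_{\mathrm{red}})$. It is a stratified sum over all intersections $D_J=\bigcap_{j\in J}D_j$ of the ordinary BCOV torsions $\tau_{\mathrm{BCOV}}(D_J,\omega)$, weighted by $\prod_{j\in J}\frac{-m_j}{m_j+1}$, plus Bott--Chern correction terms. The particular weights are forced by the motivic-integration heuristic you allude to, and there is no indication that a log-cotangent construction produces the same thing.

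Second, and more seriously, the resulting pair invariant $\tau_d$ is \emph{not} invariant under blow-ups. Zhang's blow-up formula (Theorem~\ref{thm-bl}) shows that $\tau_d(X',f^*\gamma)-\tau_d(X,\gamma)$ equals an explicit nonzero expression involving $\tau_d$ of projective spaces. The core of the paper is to discover the correct further normalization
\[
\taub(X,\gamma)=\tau_d(X,\gamma)-\tfrac{1}{2}\tau(\CP^1)\chi'_d(X,\gamma)+\bigl(-\tfrac{1}{2}\tau(\CP^2)+\tfrac{3}{4}\tau(\CP^1)\bigr)\chi''_d(X,\gamma),
\]
prove by an induction on dimension that $\taub$ vanishes on all $(\CP^n,\gamma_{m_1,\dots,m_n})$, and conclude blow-up invariance of $\taub$. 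Only then does weak factorization give $\taub(X,\gamma_0)=\taub(X',\gamma_r)$. To recover $\tau(X)=\tau(X')$ from this, one still needs $\chi'(X)=\chi'(X')$ and $\chi''(X)=\chi''(X')$, which are genuine facts about Betti numbers requiring Batyrev's theorem; your outline does not anticipate this step at all. In short, your ``comparison identity $\tau(X)=\tau(Y,D)$'' is false for the natural pair invariant, and locating the right correction terms is precisely the content you are skipping.
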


In view of \eqref{eq-tau-Tau}, Conjecture \ref{conj-BirInv} is equivalent to say that $\mathcal{T}(X)=\mathcal{T}(X')$, since birational Calabi--Yau manifolds have the same Betti numbers, by Batyrev \cite{Batyrev99}.

Conjecture \ref{conj-BirInv} was proposed in dimension 3 by Yoshikawa \cite[Conjecture 2.1]{yo06},
and in arbitrary dimension by Eriksson--Freixas i Montplet--Mourougane \cite[Conjecture B]{efm}.
Fang--Lu--Yoshikawa \cite[Conjecture 4.17]{fly} stated a weaker form of this conjecture.

Since the BCOV invariant can be thought as a ``secondary'' analogue of variation of Hodge structures associated with deformations of Calabi--Yau manifolds,
Conjecture~\ref{conj-BirInv} is a ``secondary'' analogue of the theorem of Batyrev \cite{Batyrev99} and Kontsevich \cite{Kontsevich} that birational Calabi--Yau manifolds have the same Hodge numbers.

Several results were obtained towards Conjecture \ref{conj-BirInv}:
\begin{itemize}
\item Maillot and R{\"o}ssler \cite[Theorem 1.1]{ma-ro} showed that
for two smooth projective Calabi--Yau threefolds $X$, $X'$ defined over a subfield $K$ of $\C$
such that $X_\C$ and $X_\C'$ are birational,
then for any fixed finite set $T$ of complex embeddings of $K$,
there exist $n\in \N_{>0}$ and $\alpha\in K^{\times}$,  such that
\begin{equation}\label{eq-Maillot-Rossler}
\tau(X_\sigma') - \tau(X_\sigma) = \frac{1}{n} \log \big|\sigma(\alpha)\big|
\hspace{5mm} \text{for any } \sigma\in T \;,
\end{equation}
where $X_{\sigma}:=X\otimes_{K, \sigma}\C$.
Maillot and R{\"o}ssler also proved the same result under the strictly more general hypothesis that
$X_\C$ and $X_\C'$ are derived equivalent\footnote{i.e., their bounded derived categories of coherent sheaves are equivalent as $\C$-linear triangulated categories. Note that the derived equivalence of birational Calabi--Yau threefolds was proved by Bridgeland \cite[Theorem 1.1]{brid}, and there are derived equivalent Calabi--Yau threefolds that are not birationally equivalent \cite{BorCal09}, \cite{Caldararu07}, \cite{MR2846680}.}.
\item The second author \cite[Corollary 0.5]{z} proved Conjecture \ref{conj-BirInv} for Atiyah flops of $(-1, -1)$-curves in Calabi--Yau threefolds.
\end{itemize}

%This paper has several purposes:
%\begin{itemize}
%\item we confirm Conjecture \ref{conj-BirInv};
%\item we extend Conjecture \ref{conj-BirInv} to the singular case;
%\item we build an analogue between the BCOV invariant and the motivic integration.
%\end{itemize}
%Let us now give more details about the matter of this paper.

\subsection{Main results}
In this paper, we confirm Conjecture \ref{conj-BirInv}.
\begin{alphatheorem}
\label{thm-main-BirInv}
Let $X$ and $X'$ be projective Calabi--Yau manifolds.
If $X$ and $X'$ are birationally isomorphic,
then $\tau(X) = \tau(X')$.
\end{alphatheorem}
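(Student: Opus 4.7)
The strategy is to combine weak factorization with a motivic-integration-style formula that computes $\tau$ on a common log resolution. Given birational Calabi--Yau manifolds $X$ and $X'$, first choose a smooth projective variety $Z$ together with birational morphisms $p\colon Z\to X$ and $p'\colon Z\to X'$ resolving the map $X\dashrightarrow X'$. Because $K_X$ and $K_{X'}$ are trivial, the relative canonical divisors satisfy $K_{Z/X}=K_Z=K_{Z/X'}$; equivalently, both $X$ and $X'$ impose the same discrepancy data on $Z$.

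The main step is to introduce a pair invariant $\tau(Z,D)$ for a smooth projective $Z$ endowed with an effective $\Q$-divisor $D$ satisfying $K_Z+D\sim_\Q 0$ (a smooth log Calabi--Yau pair), and to establish the formula
\[
\tau(X)=\tau(Z,K_{Z/X})
\]
for any log resolution $p\colon Z\to X$ of a smooth CY manifold $X$. Granted this, applying the formula to both $p$ and $p'$ gives $\tau(X)=\tau(Z,K_Z)=\tau(X')$. The definition of $\tau(Z,D)$ is modelled on Batyrev's stringy $E$-function: each of the three factors of \eqref{eq-intro-T} is adjusted by a contribution weighted by the coefficients of $D$, so that $\tau(Z,D)$ admits an interpretation as a motivic-type integral over $Z$, and the Kontsevich--Denef--Loeser change of variables formula is what makes the invariance in the previous display conceivable.

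To establish the formula, I would invoke the weak factorization theorem of Abramovich--Karu--Matsuki--W{\l}odarczyk to decompose $p\colon Z\to X$ into a sequence of smooth blow-ups, reducing the problem to a single blow-up formula: for a smooth log Calabi--Yau pair $(Y,D)$ and a blow-up $\pi\colon\tilde Y\to Y$ along a smooth center meeting $\mathrm{supp}(D)$ transversally, with $\tilde D$ determined by $K_{\tilde Y}+\tilde D=\pi^\ast(K_Y+D)$, the equality $\tau(\tilde Y,\tilde D)=\tau(Y,D)$ should hold. Iterating this along the weak factorization chain connects $\tau(X)$ to $\tau(Z,K_{Z/X})$.

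The main obstacle is the analytic torsion component of this blow-up formula, which extends the second author's earlier result \cite{z} on Atiyah flops to arbitrary smooth blow-ups and to log pairs. It requires Bismut--Lebeau-type immersion formulas for Ray--Singer analytic torsion, delicately matched against the logarithmic contributions coming from $D$ so that the discrepancy terms cancel exactly. By contrast, the Hodge-theoretic factor ($L^2$-covolumes of integral cohomology) and the K\"ahler volume factor in \eqref{eq-intro-T} are controlled by standard blow-up identities in Hodge theory and intersection theory, and the motivic integration viewpoint dictates the precise combinatorial shape that the cancellation must take.
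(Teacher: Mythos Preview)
Your architecture matches the paper's: define a BCOV-type invariant for pairs, prove blow-up invariance, then run weak factorization. The analytic blow-up formula you flag as the main obstacle is in fact already available (Theorem~\ref{thm-bl}, proved in \cite{z2}), so that is not where the remaining work lies.

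The real gap is your expectation that the pair invariant satisfies $\tau(\tilde Y,\tilde D)=\tau(Y,D)$ on the nose, with the discrepancy terms cancelling via the log contributions from $D$ and the motivic change-of-variables heuristic. They do not. The stratification-based pair invariant $\tau_d(Y,\gamma)$ of Definition~\ref{def-BCOV-Pair} changes under a blow-up by an explicit nonzero defect involving $\tau_d$ of projective spaces (Theorem~\ref{thm-bl}); as the paper notes at the end of \S\ref{ch-motivic}, motivic integration pins down the determinant line \eqref{eq-intro-MotInt-det} but not the Quillen metric on it, so it only \emph{partially} explains the behavior. The key step you are missing is the renormalization of Definition~\ref{def-tau-b}: one subtracts from $\tau_d$ a specific combination of the log-type localizable invariants $\chi'_d$ and $\chi''_d$ (Example~\ref{ex-localizable}), with coefficients built from $\tau(\CP^1)$ and $\tau(\CP^2)$, chosen precisely so that the resulting $\taub$ vanishes on every $(\CP^n,\gamma_{m_1,\dots,m_n})$ (Theorems~\ref{thm-tau-dim1}, \ref{thm-tau-dim2}, \ref{thm-tau-b}) and is therefore exactly blow-up invariant (Theorem~\ref{thm-bl-b}). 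After weak factorization yields $\taub(X,\gamma)=\taub(X',\gamma')$, one still needs Batyrev's theorem that birational Calabi--Yau manifolds have equal Betti numbers, hence equal $\chi'$ and $\chi''$, to undo the renormalization via \eqref{eq4-proof-thm-birat} and obtain $\tau(X)=\tau(X')$; this last step is also absent from your plan.
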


The BCOV invariants can be extended to projective manifolds with torsion canonical bundle
(or equivalently, with vanishing first Chern class by \cite{BeauvilleJDG}), see \cite{z2}.
Theorem \ref{thm-main-BirInv} still holds in this more general case.
In fact, we can prove the birational invariance in a much broader setting involving singular varieties.

We call a normal projective complex variety $X$ a \emph{ Calabi--Yau variety with canonical (resp. KLT) singularities},
if it has canonical (resp. Kawamata log terminal) singularities (cf.~\cite{KollarMori} or Definition \ref{def-KLT}) and $K_{X}\sim_{\Q}0$,
where $\sim_\Q$ is the linear equivalence relation for $\Q$-Cartier divisors.
We will propose a natural definition of the BCOV invariant for Calabi--Yau varieties with KLT singularities (see Definition \ref{def-BCOV-KLT}),
which we still denote by $\tau$.
It coincides with the usual one in the smooth case.
Theorem~\ref{thm-main-BirInv} admits the following extension:

\begin{alphatheorem}
\label{thm-main-singular}
Let $X$ and $X'$ be Calabi--Yau varieties with canonical singularities.
If $X$ and $X'$ are birationally isomorphic,
then $\tau(X) = \tau(X')$.
\end{alphatheorem}

A resolution of singularities $f\colon \widetilde{X}\to X$ is called \emph{crepant} if the relative canonical divisor $K_{\widetilde{X}/X}$ is trivial.
By Theorem \ref{thm-main-singular},
the BCOV invariant of a canonical Calabi--Yau variety equals to the BCOV invariant of any crepant resolution.
Note that neither the existence nor the uniqueness of crepant resolution is guaranteed.
Bridgeland--King--Reid \cite{BKR} proved its existence in dimension 3 for Gorenstein quotient singularities.
%the global quotient of a Calabi--Yau variety $M$ by an action  of a finite group $G$ such that the canonical bundle of $M$ is locally trivial as a $G$-sheaf.

It is worth mentioning that in the recent work \cite{DaiYoshikawa20}, Dai and Yoshikawa constructed examples showing that certain analytically defined BCOV invariants for orbifolds is \textit{not} a birational invariant already in dimension 2. The orbifold surfaces in their examples have singular points worse than canonical (i.e.~du Val) singularities, hence compatible with our Theorem \ref{thm-main-singular}. Nevertheless, quotient singularities are KLT and it is highly interesting to compare our extended definition and the analytic definition for Calabi--Yau orbifolds; see Remark~\ref{rmk-OrbifoldBCOV}.

%Note that in the recent preprint \cite{DaiYoshikawa20}, Dai and Yoshikawa constructed examples of log-Enriques surfaces, which are rational surfaces with quotient singularities and torsion canonical class (in particular, they are birationally isomorphic KLT Calabi--Yau surfaces), such that their orbifold BCOV invariants are different. Their examples are not in contradiction with Theorem \ref{thm-main-singular}, since our definition via resolution of singularities is different from theirs. See Remark \ref{rmk-OrbifoldBCOV}.

\medskip

The \emph{curvature formula} is of fundamental importance in the theory of BCOV invariants.
We refer the readers to \cite[Theorem 4.9]{fly}, \cite[Proposition 5.10]{efm} and \cite[Theorem 0.4]{z2} for the precise formulation in the smooth case.
We have the following curvature formula for the BCOV invariant of locally trivial deformation families
(in the sense of Flenner--Kosarew \cite{FlennerKosarew87}, cf. Definition \ref{def-LocTrivial})
of KLT Calabi--Yau varieties.

\begin{alphatheorem}
\label{thm-main-curvature}
Let $S$ be a complex manifold.
Let $\pi\colon \mathcal{X}\to S$ be a flat family of normal projective KLT Calabi--Yau varieties.
Let $X_s = \pi^{-1}(s)$ for $s\in S$.
Assume that $\pi$ is locally trivial.
Then the following function is $\smooth$,
\begin{align}
\begin{split}
\tau(\mathcal{X}/S) \colon S & \rightarrow \R \\
s & \mapsto \tau(X_s) \;.
\end{split}
\end{align}
Moreover,
we have the following identity of $(1,1)$-forms on $S$,
\begin{equation}
\frac{\overline{\partial}\partial}{2\pi i}\tau(\mathcal{X}/S) =
\omega_{\mathrm{Hdg},\mathcal{X}/S} - \frac{\chi(X)}{12} \omega_{\mathrm{WP},\mathcal{X}/S} \;,
\end{equation}
where $\chi(X)$ is the stringy Euler characteristic of $X_s$ (cf. Definition \ref{def-stringy-inv}),
$\omega_{\mathrm{Hdg},\mathcal{X}/S}$ is the Hodge form of the family $\mathcal{X}/S$ (cf. Definition \ref{def-stringy-Hodge-form})
and $\omega_{\mathrm{WP},\mathcal{X}/S}$ is the Weil--Petersson form of the family $\mathcal{X}/S$ (cf. Definition \ref{def-WP-form}).
\end{alphatheorem}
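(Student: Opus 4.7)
The strategy is to reduce the curvature formula to its smooth counterpart (from Fang--Lu--Yoshikawa \cite{fly}, Eriksson--Freixas i Montplet--Mourougane \cite{efm} and \cite{z2}) by passing to a simultaneous log resolution of the family and then unpacking the motivic-integration-style definition of $\tau$ (Definition \ref{def-BCOV-KLT}) stratum by stratum.

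First, using local triviality in the sense of Flenner--Kosarew, I would construct, locally on $S$, a proper birational morphism $f\colon \widetilde{\mathcal{X}} \to \mathcal{X}$ such that $\widetilde{\pi} := \pi \circ f$ is smooth, the exceptional divisor $E = \sum_i E_i$ has simple normal crossings relative to $S$, and every stratum $E_I := \bigcap_{i\in I} E_i$ is smooth and locally trivial over $S$. Because the family is locally of the form $S \times V$ for an analytic germ $V$, it suffices to resolve $V$ once and take the product; the KLT condition then yields $K_{\widetilde{\mathcal{X}}/\mathcal{X}} = \sum a_i E_i$ with all $a_i > -1$, producing a family of log resolutions $(\widetilde{X}_s, E_s)$ of the $X_s$ with constant combinatorial type.

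By Definition \ref{def-BCOV-KLT}, $\tau(X_s)$ is expected to be a universal linear combination of $\tau(\widetilde{X}_s)$ and of analytic invariants attached to the strata $E_{I,s}$, with weights depending only on the discrepancies $\{a_i\}_{i\in I}$. Applying the smooth curvature formula of \cite{z2} to $\widetilde{\mathcal{X}}/S$ and a logarithmic variant to each $E_I/S$ yields at once the smooth dependence of $\tau(\mathcal{X}/S)$ on $s$, and expresses its $\bar{\partial}\partial$-derivative as a weighted sum over strata of Hodge and Weil--Petersson type forms. To identify this sum with the right-hand side of the theorem, I would invoke motivic integration: the stringy Euler characteristic, $\omega_{\mathrm{Hdg},\mathcal{X}/S}$ and $\omega_{\mathrm{WP},\mathcal{X}/S}$ (Definitions \ref{def-stringy-inv}, \ref{def-stringy-Hodge-form}, \ref{def-WP-form}) are themselves weighted sums over the strata of any log resolution, with weights calibrated to coincide with those appearing in Definition \ref{def-BCOV-KLT}; the two expressions then match stratum by stratum.

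The main obstacle is the curvature formula on the strata. A stratum $E_I$ is generally not Calabi--Yau, so \cite{z2} does not apply verbatim; one needs a BCOV curvature formula adapted to the log Calabi--Yau pair $(\widetilde{X}_s, \sum (1+a_i) E_{i,s})$ and its strata. I expect this to follow by adapting Bismut--K\"ohler anomaly techniques to the logarithmic setting, but verifying that the resulting logarithmic Hodge and Weil--Petersson contributions on the strata reassemble under the motivic weights into the stringy forms on $\mathcal{X}/S$ is the technical heart of the matter, and ultimately rests on the same birational invariance of motivic integrals over arc spaces that underpins the rest of the paper.
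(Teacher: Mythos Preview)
Your overall strategy---simultaneous resolution, reduce to a smooth curvature formula, then match terms via the motivic-integration weights---is the same as the paper's. But you misidentify the ``main obstacle'' and end up proposing to redo work that is already done.

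The key input from \cite{z2} is not the curvature formula for Calabi--Yau manifolds but \cite[Theorem 0.4]{z2}, which is a curvature formula for the BCOV invariant $\tau_d(X',\gamma)$ of a $d$-Calabi--Yau \emph{pair}. Applied to the smooth family $(\widetilde{\mathcal{X}}/S, f^*\gamma)$ (which is a $d$-Calabi--Yau pair fiberwise, by the KLT assumption), it gives directly
\[
\frac{\overline{\partial}\partial}{2\pi i}\,\tau_d\big(\widetilde{\mathcal X}/S,\textstyle\sum_j da_j\mathcal E_j\big)
=\sum_{J}\Big(\prod_{j\in J}\tfrac{-a_j}{a_j+1}\Big)\,\omega_{H^\bullet(\mathcal E_J/S)}
-\tfrac{1}{12}\chi_d\big(\widetilde X,\textstyle\sum_j da_jE_j\big)\,\omega_{\mathrm{WP},\widetilde{\mathcal X}/S}\,.
\]
There is no need for a separate logarithmic anomaly formula on each stratum $E_I$, nor any new Bismut--K\"ohler-type computation; the stratified expression you want is already the output of the existing pair formula.

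What you are actually missing is the simple reduction that lets you apply this. By Definitions \ref{def-tau-b} and \ref{def-BCOV-KLT}, the difference $\tau(X_s)-\tau_d(\widetilde X_s,f^*\gamma_s)$ is a fixed linear combination of (stringy) Betti-number invariants $\chi'$, $\chi''$, which are constant in a locally trivial family and hence killed by $\overline{\partial}\partial$. So $\frac{\overline{\partial}\partial}{2\pi i}\tau(\mathcal X/S)=\frac{\overline{\partial}\partial}{2\pi i}\tau_d(\widetilde{\mathcal X}/S,f^*\gamma)$. After this, the right-hand side is identified with $\omega_{\mathrm{Hdg},\mathcal X/S}-\tfrac{\chi(X)}{12}\omega_{\mathrm{WP},\mathcal X/S}$ by Lemma \ref{lem-stringy-Hodge-form}, the definition of the stringy Euler characteristic, and the (easy) equality $\omega_{\mathrm{WP},\widetilde{\mathcal X}/S}=\omega_{\mathrm{WP},\mathcal X/S}$. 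The Weil--Petersson form does not decompose over strata as you suggest; it simply agrees under pullback.
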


It is still in active research to lay a rigorous foundation of a mathematical theory of B-model invariants in genus $\geqslant 2$. Once such a theory is built, its birational invariance will be of great importance. We hope that our results on genus one can serve as the first step towards the big picture.

\subsection{Overview of  proof}
To highlight the key ideas, we only explain the proof of Theorem \ref{thm-main-BirInv} here.
The proof contains three main ingredients.

\paragraph{a) \emph{BCOV invariant for pairs}}
The BCOV invariant for Calabi--Yau manifolds was extended by the second author \cite{z2} to all pairs $(X,\gamma)$
with $X$ a compact K{\"a}hler manifold
and $\gamma$ a meromorphic canonical form on $X$
such that $\div(\gamma)$ is of simple normal crossing support and without component of multiplicity $-1$.

We denote $\div(\gamma) = D = m_1D_1+\dots +m_lD_l$ and $D_J=\bigcap_{j\in J}D_j$ for $J\subseteq\{1,\dots,l\}$. By convention, $D_{\emptyset}=X$ and $\prod_{j\in \emptyset} \frac{-m_j}{m_j+1}=1$.
The BCOV invariant of $(X,\gamma)$ is defined by
\begin{equation}
\label{eq-intro-tau-pair}
\tau(X,\gamma) = \sum_{J\subseteq\{1,\dots,l\}} \left(\prod_{j\in J} \frac{-m_j}{m_j+1}\right) \tau_\mathrm{BCOV}(D_J,\omega) + \text{correction terms},
\end{equation}
where $\omega$ is a K{\"a}hler form on $X$,
$\tau_\mathrm{BCOV}(D_J,\omega)$ is the (logarithmic) BCOV torsion of $\big(D_J,\omega\big|_{D_J}\big)$ (see Definition \ref{def-bcov-torsion}),
and the correction terms are given by Bott--Chern forms, making $\tau(X,\gamma)$ independent of $\omega$ (see Definition \ref{def-BCOV-Pair}).
% The original BCOV invariant $\tau(X)$ for a Calabi--Yau manifold $X$ is recovered by taking $\gamma$ to be a nowhere vanishing canonical form with volume 1.

\paragraph{b) \emph{Blow-up formula}}
The second author \cite{z2} worked out the precise behavior of the extended BCOV invariant \eqref{eq-intro-tau-pair} under a blow-up (see Theorem \ref{thm-bl}).
The formula of the second author expresses the change of the BCOV invariant under a blow-up
in terms of the BCOV invariant of projective spaces endowed with some canonical form,
together with certain topological data.
The work of the second author is based on the technique of deformation to the normal cone of Baum--Fulton--MacPherson \cite[\textsection 1.5]{bfm},
and on series of work of Bismut and his collaborators on the Quillen metric \cite{bb, b97, b04, ble}.
%\begin{itemize}
%\item[-]
%\item[-] the immersion formula for  Quillen metrics of Bismut--Lebeau \cite{ble};
%\item[-] the submersion formula for Quillen metrics due to Berthomieu--Bismut \cite{bb};
%\item[-] the blow-up formula for Quillen metrics  by Bismut \cite{b97};
%\item[-] the relation between the holomorphic torsion and the de Rham torsion established by Bismut \cite{b04}.
%\end{itemize}

\paragraph{c) \emph{Birational BCOV}}
To confirm Conjecture \ref{conj-BirInv},
by the weak factorization theorem of Abramovich, Karu, Matsuki and W{\l}odarczyk \cite{akmw, MR2013783},
it suffices to normalize the BCOV invariant $\tau(X, \gamma)$ in \eqref{eq-intro-tau-pair}
in such a way that the normalized BCOV invariant does not change under blow-ups.
The normalization in this paper is a linear combination of the Betti numbers of the strata $\big\{D_J\big\}_{J\subseteq\{1,\cdots,l\}}$.

\subsection{BCOV invariants and motivic integration}
\label{subsec:BCOVandMI}
It might seem mysterious that the weighted sum in \eqref{eq-intro-tau-pair} happens to be the right object to study,
which eventually allows us to prove the birational invariance of the BCOV invariant.
One conceptual progress made in this paper is an explanation of the construction of $\tau(X,\gamma)$ (see \eqref{eq-intro-tau-pair})
using Kontsevich's motivic integration \cite{Kontsevich}.
It is worth mentioning that the technique of motivic integration already appeared in a less refined form in the paper of Maillot--R\"ossler \cite{ma-ro} mentioned above.

Let $(X,\gamma)$ be as in a).
We temporarily assume that $X$ is projective and $D = \div(\gamma)$ is effective.
Let $Z(X, \mathcal{I}_D;\mathbb{L}^{-1})$ be the motivic Igusa zeta function (see \textsection \ref{subsect:MotInt}) associated with $(X,D)$, evaluated at $\mathbb{L}^{-1}$.
Its Hodge realization can be computed as follows:
\begin{equation}
\label{eq-intro-MotInt}
H^\bullet\big(Z(X,\mathcal{I}_D;\mathbb{L}^{-1})\big)
= \sum_{J\subseteq\{1,\cdots,l\}} \mathbf{L}^{|J|-n} \left( \prod_{j\in J} \frac{1-\mathbf{L}^{m_j}}{\mathbf{L}^{m_j+1}-1} \right) H^\bullet(D_J) \;,
\end{equation}
where $\mathbf{L}$  on the right hand side is understood as the operator of tensoring with the Lefschetz Hodge structure $\Z(-1)$.
The following observation is crucial,
\begin{equation}
\label{eq0-intro-MotInt}
H^\bullet\big(Z(X,\mathcal{I}_D;\mathbb{L}^{-1})\big)\big|_{\mathbf{L}=1}
= \sum_{J\subseteq\{1,\cdots,l\}} \left(\prod_{j\in J} \frac{-m_j}{m_j+1}\right) H^\bullet(D_J) \;,
\end{equation}
where the coefficients are exactly the same as in \eqref{eq-intro-tau-pair}.
Using \eqref{eq0-intro-MotInt},
we will show in \S \ref{ch-motivic} that the BCOV invariant $\tau(X,\gamma)$ is essentially the Quillen metric on
\begin{equation}
\label{eq-intro-MotInt-det}
\bigotimes_k \Big(\det H^k\big(Z(X,\mathcal{I}_D;\mathbb{L}^{-1})\big)\Big)^{(-1)^kk} \;.
\end{equation}

By the change of variables formula in motivic integration (cf. Theorem \ref{thm-MotInt}), the virtual Hodge structure
$H^\bullet\big(Z(X,\mathcal{I}_D;\mathbb{L}^{-1})\big)$ in \eqref{eq-intro-MotInt} is a birational invariant.
Hence the virtual determinant line in \eqref{eq-intro-MotInt-det} is also a birational invariant,
in the sense that there is a canonical isomorphism between the complex lines \eqref{eq-intro-MotInt-det} associated with two birational varieties equipped with effective simple normal crossing divisors.
This partially explains the reason why $\tau(X, \gamma)$ is almost a birational invariant.

\medskip
This paper is organized as follows.

In \textsection \ref{ch-pr},
we give a reminder on the Quillen metric and the BCOV torsion.
A discussion on simple normal crossing divisors is also included.

In \textsection \ref{ch-log},
we develop some basic properties of the so-called localizable and log-type invariants, which will appear repeatedly throughout the paper.

In \textsection \ref{ch-bcov},
we recall the construction of $\tau(X,\gamma)$
and collect several fundamental properties of the BCOV invariant.

In \textsection \ref{ch-motivic},
we explain the construction of $\tau(X,\gamma)$ using motivic integration.

In \textsection \ref{ch-birat},
we construct a birational BCOV invariant.

In \textsection \ref{ch-singular},
we extend the BCOV invariant to the singular cases and prove Theorem \ref{thm-main-curvature}.

In \textsection \ref{ch-proof},
we prove Theorem \ref{thm-main-BirInv} and Theorem \ref{thm-main-singular}.

\medskip

\noindent \textbf{Convention:}
When we write a divisor $D=\sum_{j=1}^l m_j D_j$,
we implicitly assume that the $D_j$'s are distinct prime divisors.
For a complex manifold $X$ and a complex submanifold $Y$,
we denote by $\Bl_YX$ the blow-up of $X$ along $Y$.

\medskip

\noindent \textbf{Acknowledgement:}
We are indebted to Professor Ken-Ichi Yoshikawa for his guidance throughout the project.
We would like to thank
Professor Xianzhe Dai,
Professor Dennis Eriksson,
Professor Gerard Freixas i Montplet,
Professor Chen Jiang,
Professor Xiaonan Ma,
and Professor Vincent Maillot
for helpful discussions.

Lie Fu is supported by the Radboud Excellence Initiative from the Radboud University, by the project FanoHK (ANR-20-CE40-0023) of Agence Nationale de la Recherche in France, and by the University of Strasbourg Institute for Advanced Study (USIAS).

Yeping Zhang is supported by KIAS individual Grant MG077401 at Korea Institute for Advanced Study.

\section{Preliminaries}
\label{ch-pr}

\subsection{Quillen metric and topological torsion}
\label{subsec-top-torsion}

Let $X$ be a compact K{\"a}hler manifold of dimension $n$.
For any holomorphic vector bundle $E$ over $X$,
its \textit{determinant} line of cohomology \cite{KnudsenMumford} is
\begin{equation}
\lambda(E)=\det H^\bullet(X, E):=\bigotimes_{q=0}^n \left(\det H^q(X, E)\right)^{(-1)^q}.
\end{equation}
For any  K\"ahler metric on $X$ and any Hermitian metric on $E$,
one can define the so-called \textit{Quillen metric} \cite{q} on the determinant line $\lambda(E)$,
see \cite[Definition 1.10]{ble}.

For $p=0,\cdots,n$,
set
\begin{equation}
\label{eq-def-lambda-p}
\lambda_p(X) = \lambda\left(\bigwedge^p(T^*X)\right)=\bigotimes_{q=0}^n \Big(\det H^{p,q}(X)\Big)^{(-1)^q} \;.
\end{equation}
The determinant lines $\lambda_p(X)$, with their Quillen metrics, will be the basic building blocs in the construction of the BCOV invariant. Before doing that, let us first introduce a natural invariant that will appear later, called topological torsion, and recall its vanishing.

Set
\begin{equation}
\label{eq-def-eta-dR}
\eta(X) =\det H_{\mathrm{dR}}^\bullet(X):= \bigotimes_{k=0}^{2n} \Big(\det H^k_\mathrm{dR}(X)\Big)^{(-1)^k} \;.
\end{equation}
By the Hodge decomposition
\begin{equation}
\label{eq-Hodge-dec}
H^k_{\mathrm{dR}}(X)=\bigoplus_{p+q=k}H^{p,q}(X), \text{ for any } 0\leqslant k\leqslant n,
\end{equation}
we have
\begin{equation}
\label{eq-def-eta}
\eta(X) = \bigotimes_{p=0}^n \Big(\lambda_p(X)\Big)^{(-1)^p} \;.
\end{equation}

We fix a square root of $i$.
This choice will be irrelevant.
We identify the de Rham cohomology $H^k_\mathrm{dR}(X)$ with the singular cohomology $H^k_\mathrm{Sing}(X,\C)$ as follows,
\begin{align}
\label{eq-dR-Sing}
\begin{split}
H^k_\mathrm{dR}(X) & \rightarrow H^k_\mathrm{Sing}(X,\C) \\
[\alpha] & \mapsto \Big[\mathfrak{a} \mapsto \big(2\pi i\big)^{-k/2} \int_\mathfrak{a}\alpha\Big]\;,
\end{split}
\end{align}
where $\alpha$ is a closed $k$-form
and $\mathfrak{a}$ is a $k$-chain in $X$.
The identification \eqref{eq-dR-Sing} endows $H^k_{\mathrm{dR}}(X)$ with an integral structure.
Let  $\epsilon_X$ be a generator of the induced integral structure on $\eta(X)$.
More precisely,
for $k=0,\cdots,2n$,
let
\begin{equation}
\label{eq-def-basis-cohomology}
\sigma_{k,1},\cdots,\sigma_{k,b_k}
\in H^k_\mathrm{Sing}(X,\Z)_{\operatorname{tf}}
\end{equation}
be a $\Z$-basis of the quotient of $H^k_\mathrm{Sing}(X,\Z)$ modulo its subgroup of torsion elements.
Then $\sigma_{k,1}, \cdots, \sigma_{k,b_k}$ form a basis of $H^k_\mathrm{dR}(X)$.
Set
\begin{align}
\label{eq-def-epsilon}
\begin{split}
\epsilon_X = \bigotimes_{k=0}^{2n} \big(\sigma_{k,1}\wedge\cdots\wedge\sigma_{k,b_k}\big)^{(-1)^k} \in \eta(X) \;,
\end{split}
\end{align}
which is well-defined up to $\pm 1$.

Let $\omega$ be a K{\"a}hler form on $X$, which induces a Hermitian metric on $\bigwedge^p(T^*X)$ for any $p$.
Let $\big\lVert\cdot\big\rVert_{\lambda_p(X),\omega}$
be the Quillen metric on $\lambda_p(X)$ associated with $\omega$.
Let $\big\lVert\cdot\big\rVert_{\eta(X)}$ be the metric on $\eta(X)$
induced by $\big\lVert\cdot\big\rVert_{\lambda_p(X),\omega}$ via \eqref{eq-def-eta}. Proceeding in the same way as in the proof of \cite[Theorem 2.1]{z},
we can show that $\big\lVert\cdot\big\rVert_{\eta(X)}$ is independent of $\omega$.

\begin{defn}
\label{def-tau-top}
We define the \emph{topological torsion} of $X$ as
\begin{equation}
\label{eq-def-tau-top}
\tau_\mathrm{top}(X) = \log \big\lVert\epsilon_X\big\rVert_{\eta(X)} \;.
\end{equation}
\end{defn}

The identification \eqref{eq-dR-Sing} allows us to have the following vanishing result.

\begin{prop}[{\cite[Proposition 1.24]{z2}}]
For any compact K\"ahler manifold $X$, we have
\begin{equation}
\label{eq-prop-rei}
\tau_\mathrm{top}(X) = 0 \;.
\end{equation}
\end{prop}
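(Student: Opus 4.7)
Since $\|\cdot\|_{\eta(X)}$ is $\omega$-independent (as already noted in the text), I would fix any K\"ahler form $\omega$ on $X$ and decompose the Quillen norm on each $\lambda_p(X)$ into its $L^2$ component (induced by harmonic representatives for $\omega$) and its holomorphic analytic-torsion component $T_p(X,\omega) = \sum_q (-1)^q q\,\zeta_{p,q}'(0)$, where $\zeta_{p,q}$ is the spectral zeta function of $\Box_{p,q}^{\bar\partial}$. Assembling via $\eta(X) = \bigotimes_p \lambda_p(X)^{(-1)^p}$ yields
\[
2\tau_\mathrm{top}(X) \;=\; \log\|\epsilon_X\|_{L^2,\eta(X)}^2 \;-\; \sum_p (-1)^p T_p(X,\omega),
\]
and the plan is to show that the two contributions on the right cancel.

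For the torsion sum, Serre duality gives $\Box_{p,q}^{\bar\partial}\simeq\Box_{n-p,n-q}^{\bar\partial}$ (isospectral via the Hodge star), and averaging the index substitution $(p,q)\mapsto(n-p,n-q)$ converts $\sum_p(-1)^p T_p$ into $\tfrac{n}{2}\sum_{p,q}(-1)^{p+q}\zeta_{p,q}'(0)$. The K\"ahler identity $\Delta_\mathrm{dR}=2\Box^{\bar\partial}$ then translates the Dolbeault $\zeta'$-values into de Rham ones at the cost of explicit $\log 2$ and $(2\pi)$ factors weighted by Hodge numbers. For the $L^2$ sum, I would expand each integral basis vector $\sigma_{k,i}\in H^k_\mathrm{dR}(X)$ into its Hodge components and compute the Gram determinants under the $L^2$ pairing on harmonic forms. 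The crucial point is that the normalization $(2\pi i)^{-k/2}$ in \eqref{eq-dR-Sing} produces a factor of order $(2\pi)^{-kb_k/2}$ in each Gram determinant; when these are combined via the alternating signs in \eqref{eq-def-epsilon} and \eqref{eq-def-eta}, the resulting $(2\pi)$-contribution cancels exactly against the one coming from the torsion rearrangement. The residual non-$(2\pi)$ piece then collapses to zero by Poincar\'e--Serre duality together with the Hodge symmetry $h^{p,q}=h^{q,p}$.

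The main technical obstacle is the combinatorial bookkeeping of $(2\pi)$ and $\log 2$ factors in translating the Dolbeault $\zeta'$-values into de Rham ones and matching them against the Gram determinants of $\epsilon_X$: the exponent $-k/2$ in \eqref{eq-dR-Sing} is engineered precisely so that these cancellations are exact, so any factor-of-two or sign discrepancy would completely obscure the vanishing. Once that accounting is carried out, the identity $\tau_\mathrm{top}(X)=0$ falls out directly, with no further analytic input beyond Hodge theory and the standard spectral properties of $\Box^{\bar\partial}$.
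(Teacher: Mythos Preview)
The paper does not itself prove this proposition; it is quoted from \cite[Proposition 1.24]{z2}, so there is no in-paper argument to compare your sketch against directly.

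Your overall architecture---split the Quillen norm on $\eta(X)$ into its $L^2$ and analytic-torsion pieces, then use duality symmetries on each---is the natural strategy and is presumably close to what \cite{z2} does. But your account of \emph{where} the cancellation happens is off. The K\"ahler identity $\Delta_{\mathrm{dR}}=2\Box^{\bar\partial}$ only ever produces $\log 2$ factors when converting Dolbeault $\zeta'$-values into de Rham ones; it cannot produce powers of $2\pi$. In fact the alternating torsion $\sum_p(-1)^pT_p(X,\omega)$ vanishes outright: after your Serre-duality averaging it equals $\tfrac{n}{2}\sum_{p,q}(-1)^{p+q}\zeta'_{p,q}(0)$, and since $\sum_{p+q=k}\zeta^{\Box}_{p,q}(s)$ is the $\zeta$-function of $\Box$ on all $k$-forms, the McKean--Singer identity $\mathrm{Str}\,e^{-t\Box}\equiv\chi(X)$ forces $\sum_k(-1)^k\zeta^{\Box}_k(s)\equiv 0$. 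Both the $\zeta'(0)$ sum and the accompanying $\log 2\cdot\zeta(0)$ sum therefore vanish identically---nothing is left over on the torsion side to cancel against anything.

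Consequently the entire content of $\tau_{\mathrm{top}}(X)=0$ lives on the $L^2$ side: one must show directly that $\lVert\epsilon_X\rVert_{L^2,\eta(X)}=1$. This is where the $(2\pi i)^{-k/2}$ normalization in \eqref{eq-dR-Sing} has to be matched against the Poincar\'e-duality pairing on $\det H^\bullet_{\mathrm{dR}}(X)$ and the precise $L^2$ conventions of \cite{ble}. Your sketch asserts that this cancellation occurs but attributes it to a nonexistent $(2\pi)$-contribution from the torsion rearrangement; the actual mechanism---why the twisted integral lattice has covolume~$1$ in the $L^2$ metric on $\eta(X)$---is the substance of the proof and is not identified in your proposal.
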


\subsection{BCOV torsion} Keep the same setting of \S \ref{subsec-top-torsion}.
Following \cite[\S 5.8]{bcov2}, we consider the weighted product of determinant lines $\lambda_p(X)$ defined in \eqref{eq-def-lambda-p}.
\begin{equation}
\label{eq-def-lambda}
\lambda(X)
= \bigotimes_{0\leqslant p,q\leqslant n} \Big( \det H^{p,q}(X) \Big)^{(-1)^{p+q}p}
= \bigotimes_{p=1}^n \Big(\lambda_p(X)\Big)^{(-1)^pp} \;.
\end{equation}
Set
\begin{equation}
\label{eq-def-lambda-dr}
 \lambda_\mathrm{dR}(X)
 = \bigotimes_{k=1}^{2n} \Big(\det H^k_\mathrm{dR}(X)\Big)^{(-1)^kk}\;.
\end{equation}
By the Hodge decomposition \eqref{eq-Hodge-dec}, we have
\begin{equation}
\label{eq-def-lambda-dR}
\lambda_\mathrm{dR}(X) = \lambda(X) \otimes \overline{\lambda(X)}\;.
\end{equation}
The identity \eqref{eq-def-lambda-dR} appeared in Kato \cite[last identity in \textsection 1.3]{Kato14} and was first applied to this setting in \cite{efm}.

Let $\big\lVert\cdot\big\rVert_{\lambda(X),\omega}$
be the metric on $\lambda(X)$ induced by $\big\lVert\cdot\big\rVert_{\lambda_p(X),\omega}$ via \eqref{eq-def-lambda}.
Let $\big\lVert\cdot\big\rVert_{\lambda_\mathrm{dR}(X),\omega}$
be the metric on $\lambda_\mathrm{dR}(X)$ induced by $\big\lVert\cdot\big\rVert_{\lambda(X),\omega}$ via \eqref{eq-def-lambda-dR}.
Let $\sigma_X$ be the integral generator of $\lambda_{\mathrm{dR}}(X)$ defined as follows,
using the $\Z$-basis of $H^\bullet_{\mathrm{Sing}}(X,\Z)_{\operatorname{tf}}$ in \eqref{eq-def-basis-cohomology},
\begin{equation}
	\sigma_X = \bigotimes_{k=1}^{2n} \big(\sigma_{k,1}\wedge\cdots\wedge\sigma_{k,b_k}\big)^{(-1)^kk}.
\end{equation}

\begin{defn}
\label{def-bcov-torsion}
We define the \emph{BCOV torsion} of $(X, \omega)$ as
\begin{equation}
\label{eq-def-bcov-torsion}
\tau_\mathrm{BCOV}(X,\omega) =
\log \big\lVert\sigma_X\big\rVert_{\lambda_\mathrm{dR}(X),\omega} \;.
\end{equation}
\end{defn}

In the case where $X$ is a Calabi--Yau manifold equipped with a Ricci-flat metric $\omega$,
this invariant $\tau_{\operatorname{BCOV}}(X, \omega)$ is precisely the logarithm of
the product of the first two factors on the right hand side of \eqref{eq-intro-T}.

\subsection{Divisor with simple normal crossing support}
\label{subsect-snc}

For $I\subseteq\big\{1,\cdots,n\big\}$,
set
\begin{equation}
\C^n_I =
\Big\{ (z_1,\cdots,z_n)\in \C^n \;:\; z_i = 0 \hspace{2.5mm} \text{for } i\in I \Big\}
\subseteq \C^n \;.
\end{equation}

Let $X$ be a complex manifold of dimension $n$.
Let $Y_1,\cdots,Y_l \subseteq X$ be closed complex submanifolds.

\begin{defn}
\label{def-ti}
We say that $Y_1,\cdots,Y_l$ \textit{transversally intersect} if
for any $x\in X$,
there exists a holomorphic local chart $\C^n \supseteq U \xrightarrow{\varphi} X$
such that
\begin{itemize}
\item[-] $0\in U$ and $\varphi(0) = x$;
\item[-] for each $k=1,\cdots,l$,
either $\varphi^{-1}(Y_k) = \emptyset$, \\
or $\varphi^{-1}(Y_k) = U \cap \C^n_{I_k}$ for certain $I_k\subseteq\big\{1,\cdots,n\big\}$.
\end{itemize}
\end{defn}

\begin{rem}
\textit{The definition of transversal intersection above is more general than the usual one.}
Let $Y,Z \subseteq X$ be connected complex submanifold which transversally intersect in the sense of Definition \ref{def-ti}.
If $Y \subseteq X$ is of codimension $1$,
then
\begin{itemize}
\item[-] either $Y$ and $Z$ transversally intersect in the usual sense,
\item[-] or $Z \subseteq Y$.
\end{itemize}
\end{rem}

We denote
\begin{equation}
D = \sum_{j=1}^l m_j D_j \;,
\end{equation}
where $m_j\in\Z\backslash\{0\}$
and $D_1,\cdots,D_l \subseteq X$ are mutually distinct prime divisors.

\begin{defn}
\label{def-snc}
A divisor $D$ on $X$ is
said \textit{with simple normal crossing support}
if $D_1,\cdots,D_l$ are smooth and transversally intersect.
\end{defn}

Now let $D$ be a divisor on $X$ with simple normal crossing support.
Let $L$ be the holomorphic line bundle $\mathcal{O}_X(D)$.
Denote by $\mathscr{M}(X,L)$ the space of meromorphic sections of $L$.
Let $\gamma\in\mathscr{M}(X,L)$ such that $\div(\gamma) = D$.
Let $L_j$ be the normal line bundle of $D_j \hookrightarrow X$.

\begin{defn}
\label{def-res}
We define $\mathrm{Res}_{D_j}(\gamma)\in\mathscr{M}(D_j,L \otimes L_j^{-m_j})$ as the image of $\gamma\in\mathscr{M}(X,L)$ via the canonical isomorphism
\begin{equation}
L(-m_jD_j)\big|_{D_j} \simeq L\big|_{D_j} \otimes L_j^{-m_j} \;.
\end{equation}
\end{defn}

Let $\C^n \supseteq U \xrightarrow{\varphi} X$ be a local chart as in Definition \ref{def-ti}.
Assume that
\begin{equation}
\gamma\big|_{\varphi(U)} = s \varphi_*\big(z_1^{m_1} \cdots z_n^{m_r}\big) \;,
\end{equation}
where $0 \leqslant r \leqslant n$ and $s\in H^0(\varphi(U),L)$ is nowhere vanishing.
For $j=1,\cdots,r$,
we have
\begin{equation}
\mathrm{Res}_{D_j}(\gamma) \big|_{D_j \cap \varphi(U)} = s \varphi_*\big(z_1^{m_1} \cdots z_{j-1}^{m_{j-1}} z_{j+1}^{m_{j+1}}\cdots z_r^{m_r} (dz_j)^{m_j} \big) \;,
\end{equation}
where $dz_j$ is viewed as a conormal vector of $\{z_j=0\} \subseteq \C^n$.

Note that
\begin{equation}
\div\big(\mathrm{Res}_{D_1}(\gamma)\big) = \sum_{j=2}^l m_j \big( D_1 \cap D_j \big) \;.
\end{equation}
The following identity holds in $ \mathscr{M}\left(D_1 \cap D_2,L \otimes L_1^{-m_1} \otimes L_2^{-m_2}\right)$,
\begin{align}
\label{eq-res-commute}
\begin{split}
\mathrm{Res}_{D_1 \cap D_2}\big(\mathrm{Res}_{D_1}(\gamma)\big)
= \mathrm{Res}_{D_1 \cap D_2}\big(\mathrm{Res}_{D_2}(\gamma)\big)  \;.
\end{split}
\end{align}
In other words,
the order of taking $\mathrm{Res}_\cdot(\cdot)$ does not matter.

\section{Localizable invariants}
\label{ch-log}

\subsection{Definitions and examples}

%A map $\phi: \Kah \rightarrow \R$ is called an invariant if for any $X,Y\in\Kah$, $X \simeq Y \Rightarrow \phi(X) = \phi(Y)$.
%For $X\in\Kah$,
%let $\VB_r(X)$ be the category of holomorphic vector bundles over $X$ of rank $r$.

\begin{defn}
\label{def0-LocInv}
Let $\Kah$ be the category of compact K\"ahler manifolds. Let $\phi: \Kah \rightarrow \R$ be a function that depends only on the isomorphism classes of compact K\"ahler manifolds.
\begin{itemize}
\item $\phi$ is called a \emph{localizable invariant} if for any $X, X'\in\Kah$, and closed complex submanifolds $Y\subseteq X$, $Y'\subseteq X'$ such that $Y \simeq Y'$ and $N_{Y/X} \simeq N_{Y'/X'}$, we have
\begin{equation}
\label{eq-bl-relation}
\phi\big(\Bl_YX\big) - \phi(X) = \phi\big(\Bl_{Y'}X'\big) - \phi(X') \;.
\end{equation}
\item $\phi$ is called of \emph{log-type}\footnote{The terminology refers to the fact that if $\chi(X)\neq 0$, then
$\frac{\phi(\mathbb{P}(V))}{\chi(\mathbb{P}(V))}=\frac{\phi(X)}{\chi(X)}+\frac{\phi(\CP^{r-1})}{\chi(\CP^{r-1})}$.} if
for any $X\in\Kah$ and  $V$ a holomorphic vector bundle of rank $r$ over $X$, we have
\begin{equation}
\label{eq2-twist-relation}
\phi\big(\mathbb{P}(V)\big) = \chi\big(\CP^{r-1}\big)\phi(X) + \chi(X)\phi\big(\CP^{r-1}\big) \;.
\end{equation}
%where $\mathbb{P}(V)$ is the associated $\CP^{r-1}$-bundle over $X$.
\item $\phi$ is called \emph{additive} if
for any $X\in\Kah$ and $Y\subseteq X$ a closed complex submanifold, we have
\begin{equation}
\label{eq-add-relation}
\phi\big(\Bl_YX\big) - \phi(X) = \phi\big(\mathbb{P}(N_{Y/X})\big) - \phi(Y) \;.
\end{equation}
\end{itemize}
An additive invariant is clearly localizable.
A linear combination of localizable (resp. log-type, additive) invariants is again localizable (resp.~of log-type, additive).
\end{defn}

Let us give several examples of such invariants that will play important roles later.
\begin{ex}
\label{ex-localizable}
Let $X$ be a compact K\"ahler manifold.
\begin{itemize}
\item For any $k\in \N$,
the $k$-th Betti number $b_{k}(X)$ is an additive invariant.
Let
\begin{equation}
P_t(X) = \sum_{k=0}^{2 \dim X} b_k(X) t^k
\end{equation}
be the Poincar{\'e} polynomial.
For any $t\in\R$,
$P_t(X)$ is an additive invariant.
In particular,
the topological Euler characteristic
\begin{equation}
\label{eq-Def-Chi}
\chi(X) = P_{-1}(X)
\end{equation}
is additive.
\item The invariant
\begin{equation}
\label{eq-Def-Chi'}
\chi'(X) = \frac{\d}{\d t}P_t(X) \Big|_{t=-1} = \dim(X) \chi(X)
\end{equation}
is of log-type and additive.
To show that $\chi'(X)$ is of log-type (i.e., identity \eqref{eq2-twist-relation}),
we take the derivative of the identity
$P_t(\mathbb{P}(V)) = P_t(X)P _t(\CP^{r-1})$.
\item The invariant
\begin{equation}
\label{eq-Def-Chi''}
\chi''(X) = \frac{\d^2}{\d t^2}P_t(X) \Big|_{t=-1} - \dim(X)^2 \chi(X)
= P_t(X) \frac{\d^2}{\d t^2} \log P_t(X) \Big|_{t=-1}
\end{equation}
is of log-type and localizable (but not additive).
To show that $\chi''(X)$ is of log-type (i.e., identity \eqref{eq2-twist-relation}),
we take the second derivative of the logarithm of the identity
$P_t(\mathbb{P}(V)) = P_t(X)P _t(\CP^{r-1})$.
\end{itemize}
\end{ex}

\subsection{Localizable invariant for pairs}
\label{subch-log-pair}

Let $d$ be a non-zero integer.
\begin{defn}
	\label{def-ConditionStard}
	For a compact K\"ahler manifold $X$ and a divisor
	\begin{equation}
	D=\sum_{j=1}^l m_jD_j
	\end{equation}
	on $X$,
	we say that $(X,D)$ satisfies \textbf{condition $(\star_d)$}
	if $D$ is of simple normal crossing support and $m_j\neq -d$ for all $j$.
\end{defn}

We will always use the following notation.
For $J\subseteq\{1,\dots,l\}$, set
\begin{equation}
\label{eq-def-wJ}
w^J_d = \prod_{j\in J} \frac{-m_j}{m_j+d} \;,\hspace{5mm}
D_J =  \bigcap_{j\in J} D_j \;.
\end{equation}
By convention,
$w^d_\emptyset = 1$ and $D_\emptyset = X$.

\begin{defn}
\label{def-LocInv}
Let $\phi$ be a localizable invariant.
Let $d\in \Z\backslash\{0\}$.
For $(X,D)$ satisfying the condition $(\star_d)$,
we define
\begin{equation}
\label{eq-def-LocInv}
\phi_d(X, D) = \sum_{J\subseteq\{1,\dots,l\}} w^J_d \phi(D_J) \;.
\end{equation}
If there is a meromorphic section $\gamma$ of a holomorphic line bundle over $X$
such that  $\div(\gamma) = D$,
we define $\phi_d(X,\gamma) = \phi_{d}(X,D)$.
\end{defn}

Let $[\xi_0:\cdots:\xi_n]\in\CP^n$ be homogenous coordinates.
For $j=0,\cdots,n$,
we denote $H_j = \{\xi_j=0\} \subseteq\CP^n$.
For $m_0,\dots,m_n\in\Z$,
we denote
\begin{equation}
D_{m_0,\cdots,m_n} = \sum_{j=0}^n m_jH_j \;.
\end{equation}

Recall that $\chi$ is the topological Euler characteristic.
Replacing $\phi$ by $\chi$ in Definition \ref{def-LocInv},
we get $\chi_d(\cdot,\cdot)$.

\begin{lemme}
\label{lem-ProjSp}
For $d\in \Z\backslash\{0\}$ and $m_0,\dots,m_n\in\Z\backslash\{-d\}$,
we have
\begin{equation}
\label{eq-lem-ProjSp}
\chi_d\big(\CP^n,D_{m_0,\cdots,m_n}\big)
= \bigg(\prod_{j=0}^n(m_j+d)\bigg)^{-1} d^n \sum_{j=0}^n(m_j+d) \;.
\end{equation}
It follows that
$\chi_d\big(\CP^n,D_{m_0,\cdots,m_n}\big)$ vanishes if and only if $D_{m_0,\cdots,m_n}$ is a $d$-canonical divisor.
\end{lemme}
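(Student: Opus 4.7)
The plan is to unwind Definition \ref{def-LocInv} and reduce the resulting sum to a short generating function computation. Since the hyperplane arrangement $\{H_j\}_{j=0}^n$ on $\CP^n$ is transverse, for $J\subseteq\{0,\dots,n\}$ the intersection $D_J$ is isomorphic to $\CP^{n-|J|}$ when $|J|\leqslant n$ and empty when $|J|=n+1$, so uniformly $\chi(D_J)=n+1-|J|$ (taking $\chi(\emptyset)=0$). Setting $c_j=-m_j/(m_j+d)$, Definition \ref{def-LocInv} becomes
\[
\chi_d\bigl(\CP^n,D_{m_0,\dots,m_n}\bigr)=\sum_{J\subseteq\{0,\dots,n\}}\Bigl(\prod_{j\in J}c_j\Bigr)(n+1-|J|)=(n+1)F(1)-F'(1),
\]
where $F(t)=\prod_{j=0}^n(1+c_jt)$, the two generating-function values encoding the total mass and the weighted cardinality, respectively.

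The key simplification then rests on two elementary identities: $1+c_j=d/(m_j+d)$ and $c_j/(1+c_j)=-m_j/d$. The first gives at once $F(1)=d^{n+1}\big/\prod_j(m_j+d)$. The second, combined with the logarithmic derivative $F'(t)/F(t)=\sum_j c_j/(1+c_jt)$ evaluated at $t=1$, gives $F'(1)=F(1)\cdot\bigl(-\sum_jm_j\bigr)/d$. Substituting these into $(n+1)F(1)-F'(1)$ telescopes to
\[
F(1)\cdot\frac{(n+1)d+\sum_jm_j}{d}=\frac{d^{n}\sum_j(m_j+d)}{\prod_j(m_j+d)},
\]
which is exactly the claimed formula.

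For the concluding statement, observe that the vanishing condition $\sum_j(m_j+d)=0$ is equivalent to $\sum_jm_j=-d(n+1)$. Since $K_{\CP^n}=-(n+1)H$ in $\mathrm{Pic}(\CP^n)$ and $D_{m_0,\dots,m_n}$ has class $\bigl(\sum_jm_j\bigr)H$, this is precisely the condition that $D_{m_0,\dots,m_n}$ represent $dK_{\CP^n}$. I do not anticipate a genuine obstacle: the entire argument is a bookkeeping exercise, and the only real content is the logarithmic-derivative trick that makes the alternating weighted sum over subsets $J$ collapse into a single symmetric expression in the $m_j+d$.
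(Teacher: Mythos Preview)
Your proof is correct and follows essentially the same approach as the paper: both reduce the weighted sum over subsets $J$ to a generating-function identity and evaluate it via a logarithmic derivative at $t=1$. The only cosmetic difference is that the paper packages the sum as $f'(1)$ for $f(t)=\prod_j\bigl(t-\tfrac{m_j}{m_j+d}\bigr)$, whereas you write it as $(n+1)F(1)-F'(1)$ for $F(t)=\prod_j(1+c_jt)$; since $f(t)=t^{n+1}F(1/t)$, these are the same computation.
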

\begin{proof}
Let $w^J_d$ be as in \eqref{eq-def-wJ}.
By Definition \ref{def-LocInv},
we have
\begin{equation}
\label{eq1-pf-lem-ProjSp}
\chi_d\big(\CP^n,D_{m_0,\cdots,m_n}\big) = \sum_{J\subseteq\{0,\dots,n\}} w^J_d (n+1-|J|) \;.
\end{equation}
Set
\begin{equation}
\label{eq2-pf-lem-ProjSp}
f(t) = \prod_{j=0}^n \Big(t - \frac{m_j}{m_j+d}\Big) = \sum_{J\subseteq\{0,\dots,n\}} w^J_d t^{n+1-|J|} \;.
\end{equation}
By \eqref{eq1-pf-lem-ProjSp} and \eqref{eq2-pf-lem-ProjSp},
we have
\begin{equation}
\label{eq3-pf-lem-ProjSp}
\chi_d\big(\CP^n,D_{m_0,\cdots,m_n}\big) = f'(1) \;,\hspace{5mm}
\frac{f'(1)}{f(1)} = \frac{\d}{\d t}\log f(t)\Big|_{t=1} = \sum_{j=0}^n \frac{m_j+d}{d} \;.
\end{equation}
From \eqref{eq2-pf-lem-ProjSp} and \eqref{eq3-pf-lem-ProjSp},
we obtain \eqref{eq-lem-ProjSp}.
This completes the proof.
\end{proof}

%Let $f: \Bl_0\CP^n\rightarrow\CP^n$ be the blow-up at $0\in\C^n\subseteq\CP^n$.
%Let $\widetilde{D}_{m_0,\cdots,m_n}$ be the strict transform of $D_{m_0,\cdots,m_n}$.
%We denote
%\begin{equation}
%\label{eq-def-me}
%m_e = d(n-1)+\sum_{j=1}^n m_j \;.
%\end{equation}
%Let $E=f^{-1}(0)$ be the exceptional divisor and
%\begin{equation}
%D_{m_0,\cdots,m_n}'= \widetilde{D}_{m_0,\cdots,m_n} + m_eE \;.
%\end{equation}

Let $Y$ be a compact K\"ahler manifold and $V$ be a holomorphic vector bundle of rank $r$ over $Y$.
Set $X = \mathbb{P}(V)$.
Let $\pi: X \rightarrow Y$ be the canonical projection.
Let $D$ be a divisor on $X$.
We assume that
there exist a divisor $D_Y$ on $Y$,
non-zero integers $m_1,\cdots,m_s$,
and holomorphic sub-bundles $V_1,\cdots,V_s \subseteq V$ of rank $r-1$, such that
\begin{equation}
D = \pi^*D_Y + \sum_{j=1}^s m_j \mathbb{P}(V_j) \;.
\end{equation}
We further assume that $V_1,\cdots,V_s \subseteq V$ transversally intersect.
In particular,  $s\leqslant r$.
We will use the convention $m_{s+1}=\cdots=m_{r}=0$.
For $y\in Y$,
we denote $Z_y = \pi^{-1}(y)$.
Set
\begin{equation}
D_{Z_y} = \sum_{j=1}^s m_s \big( \mathbb{P}(V_j) \cap Z_y \big) \;.
\end{equation}
Then $\big(Z_y,D_{Z_y}\big)$ is isomorphic to $\big(\CP^{r-1},D_{m_1,\cdots,m_r}\big)$ for any $y\in Y$.
In the sequel,
we omit the index $y$ in $\big(Z_y,D_{Z_y}\big)$.
Such a pair $(X,D)$ will be called a \textit{fibration over $(Y,D_Y)$ with fiber $(Z,D_Z)$}.

\begin{lemme}
\label{lem-ProjBun}
Assume that $D$ satisfies the condition $(\star_{d})$ in Definition \ref{def-ConditionStard}.
We have
\begin{equation}
\chi_d(X,D) = \chi_d(Y,D_Y)\chi_d(Z,D_Z) \;.
\end{equation}
\end{lemme}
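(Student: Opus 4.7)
The plan is to expand $\chi_d(X,D)$ directly from Definition \ref{def-LocInv}, exploit the product structure inherited from the projective bundle to factor both the combinatorial weights and the Euler characteristics of the strata, and then recognize the two resulting factors as $\chi_d(Y,D_Y)$ and $\chi_d(Z,D_Z)$.

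First I would fix notation. Write $D_Y=\sum_{i=1}^\ell n_i E_i$ with the $E_i$ the distinct prime components. Since $\pi$ is a smooth fibration with connected fibers and $D$ has simple normal crossing support, each $\pi^{-1}(E_i)$ is a smooth prime divisor on $X$, and the prime components of $D$ are exactly $\pi^{-1}(E_1),\dots,\pi^{-1}(E_\ell)$ (with coefficients $n_i$) together with $\mathbb{P}(V_1),\dots,\mathbb{P}(V_s)$ (with coefficients $m_j$). In particular condition $(\star_d)$ descends to $(Y,D_Y)$ and to $(Z,D_Z)$, so both factors on the right-hand side are defined; moreover the transversality of the $V_j$'s together with that of the $E_i$'s guarantees that all these prime divisors on $X$ are in SNC position.

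Next I would index a stratum of $D$ by a pair $(J,K)$ with $J\subseteq\{1,\dots,\ell\}$ and $K\subseteq\{1,\dots,s\}$. The weight from \eqref{eq-def-wJ} factors as
\begin{equation*}
\prod_{i\in J}\frac{-n_i}{n_i+d}\,\cdot\,\prod_{j\in K}\frac{-m_j}{m_j+d},
\end{equation*}
which is precisely the product of the weights attached to the strata $(D_Y)_J\subseteq Y$ and $(D_Z)_K\subseteq Z$. The corresponding stratum of $D$ is
\begin{equation*}
D_{(J,K)}=\pi^{-1}\bigl((D_Y)_J\bigr)\cap\bigcap_{j\in K}\mathbb{P}(V_j),
\end{equation*}
and, by the transversality of the $V_j$'s, the restriction of $\pi$ to $D_{(J,K)}$ is a locally trivial fibration over $(D_Y)_J$ with fiber $\mathbb{CP}^{\,r-1-|K|}$. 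Multiplicativity of the topological Euler characteristic for such fibrations then yields
\begin{equation*}
\chi(D_{(J,K)})=\chi\bigl((D_Y)_J\bigr)\cdot(r-|K|).
\end{equation*}

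Plugging these factorizations into Definition \ref{def-LocInv}, the double sum separates as
\begin{equation*}
\chi_d(X,D)=\Bigl(\sum_{J}\prod_{i\in J}\tfrac{-n_i}{n_i+d}\,\chi\bigl((D_Y)_J\bigr)\Bigr)\cdot\Bigl(\sum_{K}\prod_{j\in K}\tfrac{-m_j}{m_j+d}\,(r-|K|)\Bigr),
\end{equation*}
which by Definition \ref{def-LocInv} applied to $(Y,D_Y)$ and $(Z,D_Z)$ (the right-hand factor being the expansion of $\chi_d(\mathbb{CP}^{r-1},D_{m_1,\dots,m_r})$ from Lemma \ref{lem-ProjSp}) equals $\chi_d(Y,D_Y)\cdot\chi_d(Z,D_Z)$. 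The only point demanding real care is the bookkeeping in the second paragraph: one must verify that the assumed SNC/transversality data really describes the prime components of $D$ in the expected product form, so that the stratification is not double-counted and condition $(\star_d)$ genuinely descends to both factors. Once that is in place, the argument reduces to the purely formal factorization above.
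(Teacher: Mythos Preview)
Your proof is correct and follows essentially the same approach as the paper's own proof, which is a one-line remark that the identity is a straightforward computation from Definition \ref{def-LocInv} using additivity and multiplicativity of the Euler characteristic. You have simply spelled out that computation explicitly: indexing strata by pairs $(J,K)$, factoring the weights, and using that each stratum $D_{(J,K)}$ is a $\CP^{r-1-|K|}$-bundle over $(D_Y)_J$ so that its Euler characteristic factors. One small remark: your final identification of the second factor with $\chi_d(Z,D_Z)$ comes directly from Definition \ref{def-LocInv} applied to $(\CP^{r-1},D_{m_1,\dots,m_s})$; Lemma \ref{lem-ProjSp} is not actually needed here.
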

\begin{proof}
It is a straightforward computation from Definition \ref{def-LocInv}
by using the fact that $\chi(\cdot)$ is an additive invariant and is multiplicative with respect to  products of varieties.
\end{proof}

\begin{prop}
\label{prop-ProjBun}
Let $\phi$ be a log-type localizable invariant and $d\in \Z\backslash\{0\}$.
Assume that $D$ is a $d$-canonical divisor and satisfies the condition $(\star_{d})$.
We have
\begin{equation}
\label{eq-prop-ProjBun}
\phi_d(X, D) = \chi_d(Y,D_Y)\phi_d(Z,D_Z) \;.
\end{equation}
\end{prop}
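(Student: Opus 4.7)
The plan is to unpack $\phi_d(X,D)$ stratum by stratum, exploit the product structure of strata as projective bundles, and use the log-type property together with Lemma \ref{lem-ProjSp}.

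First I would organize the prime components of $D$ into two groups: those coming from $\pi^*D_Y$ (writing $D_Y=\sum_i n_i D_Y^{(i)}$, the pullbacks $\pi^{-1}(D_Y^{(i)})$ with multiplicity $n_i$) and the $\mathbb{P}(V_j)$ with multiplicity $m_j$ for $j=1,\dots,s$. A stratum is then indexed by a pair $(I,J)$ and takes the form $D_{(I,J)} = \mathbb{P}\bigl(\bigcap_{j\in J}V_j \bigm|_{D_Y^I}\bigr)$, which is a projective bundle over $D_Y^I$ with fiber $\CP^{r-1-|J|}$ (here I use the transversality of the $V_j$'s). The weight factors cleanly as $w_d^{(I,J)}=w_d^I\cdot w_d^J$, where $w_d^I$ uses the $n_i$'s from $D_Y$ and $w_d^J$ uses the $m_j$'s from $D_Z$.

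Next I would invoke the log-type property \eqref{eq2-twist-relation} of $\phi$ applied to the bundle $D_{(I,J)}\to D_Y^I$ with fiber $\CP^{r-1-|J|}$, giving
\begin{equation*}
\phi(D_{(I,J)}) = \chi(\CP^{r-1-|J|})\,\phi(D_Y^I) + \chi(D_Y^I)\,\phi(\CP^{r-1-|J|}) \;.
\end{equation*}
Summing over $(I,J)$ with the factored weights, the double sum splits into a product of single sums, yielding
\begin{equation*}
\phi_d(X,D) = \phi_d(Y,D_Y)\,\chi_d(Z,D_Z) + \chi_d(Y,D_Y)\,\phi_d(Z,D_Z) \;.
\end{equation*}

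The final step, and the only real subtlety, is to show that $\chi_d(Z,D_Z)=0$. For this I would use the hypothesis that $D$ is $d$-canonical on $X$ together with the fact that the normal bundle of a fiber $Z=\pi^{-1}(y)\subseteq \mathbb{P}(V)$ is trivial. By the adjunction formula $K_X|_Z = K_Z\otimes(\det N_{Z/X})^{-1} = K_Z$, so $D_Z = D|_Z \sim d K_Z$ as divisors on $\CP^{r-1}$, i.e.\ $D_Z$ is $d$-canonical. Lemma \ref{lem-ProjSp} then forces $\chi_d(Z,D_Z)=0$, killing the first summand and leaving exactly \eqref{eq-prop-ProjBun}. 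The main point to double-check will be the adjunction identification $K_X|_Z\cong K_Z$ and the transversality ensuring that $D|_Z = D_Z$ as divisors (which follows from condition $(\star_d)$ applied to the fiber), but both are standard.
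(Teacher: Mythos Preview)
Your proposal is correct and follows essentially the same route as the paper: both establish the symmetric identity $\phi_d(X,D)=\phi_d(Y,D_Y)\chi_d(Z,D_Z)+\chi_d(Y,D_Y)\phi_d(Z,D_Z)$ from the log-type property applied to the projective-bundle strata, then kill the first term via Lemma~\ref{lem-ProjSp} using that $(Z,D_Z)$ is $d$-canonical. The paper compresses your stratum-by-stratum computation into a one-line citation of Definitions~\ref{def0-LocInv} and~\ref{def-LocInv}, but the content is the same; one small quibble is that $D|_Z=D_Z$ follows simply because the components $\pi^{-1}(D_Y^{(i)})$ restrict trivially to a fiber, not from condition~$(\star_d)$.
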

\begin{proof}
By Definitions \ref{def0-LocInv} and \ref{def-LocInv},
we have
\begin{equation}
\label{eq1-pf-prop-ProjBun}
\phi_d(X, D) = \chi_d(Y,D_Y)\phi_d(Z,D_Z) + \phi_d(Y,D_Y)\chi_d(Z,D_Z) \;.
\end{equation}
Since $(X,D)$ is $d$-canonical,
so is $(Z,D_Z)$.
By Lemma \ref{lem-ProjSp},
we have
\begin{equation}
\label{eq2-pf-prop-ProjBun}
\chi_d(Z,D_Z) = 0 \;.
\end{equation}
From \eqref{eq1-pf-prop-ProjBun} and \eqref{eq2-pf-prop-ProjBun},
we obtain \eqref{eq-prop-ProjBun}.
This completes the proof.
\end{proof}

For $r\in\N\backslash\{0\}$ and $m_1,\cdots,m_s\in \Z$ with $s\leqslant r$,
let $\big(\CP^r,D_{m_1,\cdots,m_s}\big)$ be such that
\begin{equation}
D_{m_1,\cdots,m_s} = \sum_{j=1}^s m_jH_j \;.
\end{equation}
For $r\in\N\backslash\{0\}$ and $m_1,\cdots,m_s\in \Z$ with $s\leqslant r$,
consider a pair $\big(\CP^r,D_{d;m_1,\cdots,m_s}\big)$ with
\begin{equation}
D_{d;m_1,\cdots,m_s} = - (m_1+\cdots+m_s+rd+d) H_0 + \sum_{j=1}^s m_jH_j \;.
\end{equation}
We remark that $D_{d;m_1,\cdots,m_s}$ is a $d$-canonical divisor.

Let $X$ be a compact K\"ahler manifold.
Let
\begin{equation}
D = \sum_{j=1}^l m_j D_j
\end{equation}
be a divisor on $X$ with simple normal crossing support.
Let $Y\subseteq X$ be a connected complex submanifold of codimension $r$ intersecting $D_1,\cdots,D_l$ transversally (see Definition \ref{def-ti})
and
\begin{equation}
Y \subseteq D_j \hspace{2.5mm}\text{for } j=1,\cdots,s \;;\hspace{5mm}
Y \nsubseteq D_j \hspace{2.5mm}\text{for } j=s+1,\cdots,l \;.
\end{equation}
In particular, $s\leqslant r$.
Set
\begin{equation}
D_Y = \sum_{j=s+1}^l m_j\big(D_j \cap Y\big) \;.
\end{equation}
Let $f: X'\to X$ be the blow-up along $Y$.
Let $\widetilde{D}$ be the strict transformation of $D$.
Let $E = f^{-1}(Y)\subseteq X$ be the exceptional divisor.
Set
\begin{equation}
D' = \widetilde{D} + m_e E \;,\quad\text{ where }
m_e = (r-1)d + m_1 + \cdots + m_s \;.
\end{equation}
Note that if $D$ is a $d$-canonical divisor, then so is $D'$. Indeed,
\begin{align*}
	dK_{X'}&=f^*K_X+d(r-1)E\\
	&=f^*\left(\sum_{j=1}^l m_jD_j\right)+d(r-1)E\\
	&=\sum_{j=1}^{s}m_j(\widetilde{D_j}+E)+\sum_{j=s+1}^lm_j\widetilde{D_j}+d(r-1)E\\
	&=\widetilde{D}+m_e E.
\end{align*}

\begin{prop}
\label{prop-Blowup}
Let $\phi$ be a log-type localizable invariant and $d\in \Z\backslash\{0\}$.
Assume that $D$ is a $d$-canonical divisor and satisfies the condition $(\star_{d})$ in Definition \ref{def-ConditionStard}.
We have
\begin{align}
\label{eq-prop-Blowup}
\begin{split}
& \chi_d(X', D')-\chi_d(X,D) = 0 \;,\\
& \phi_d(X', D')-\phi_d(X,D) \\
& = \chi_d(Y,D_Y)
\Big( \chi_d\big(\CP^{r-1},D_{m_1,\cdots,m_s}\big) \phi_d\big(\CP^1, D_{d;m_e}\big)
- \phi_d\big(\CP^r,D_{d;m_1,\cdots,m_s}\big) \Big) \;.
\end{split}
\end{align}
\end{prop}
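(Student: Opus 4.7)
The plan is to reduce the proof to a computation on a concrete local model built from the normal bundle $V = N_{Y/X}$, then evaluate both sides using Proposition \ref{prop-ProjBun} twice. I will prove the second identity in detail; the first follows by specializing $\phi = \chi$ and invoking Lemma \ref{lem-ProjSp}, as I explain at the end. First, I group the sum defining $\phi_d(X', D')$ by whether the exceptional-divisor index $e$ lies in the stratum subset. Using $w^{J \cup \{e\}}_d = w^J_d \cdot \frac{-m_e}{m_e + d}$, this yields
\[
\phi_d(X', D') - \phi_d(X, D) = \sum_{J \subseteq \{1,\ldots,l\}} w^J_d \bigl[\phi(\widetilde{D}_J) - \phi(D_J)\bigr] + \frac{-m_e}{m_e+d}\,\phi_d(E, \widetilde{D}|_E).
\]

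Next, I set up the local model. Put $V_j = N_{Y/D_j} \subseteq V$ for $j \leqslant s$, take $X_0 = \mathbb{P}(V \oplus \mathcal{O}_Y)$ with $Y_0$ the zero-section, and let $\pi \colon X_0 \to Y$ be the projection. On $X_0$ define
\[
D^0 = \sum_{j \leqslant s} m_j \mathbb{P}(V_j \oplus \mathcal{O}_Y) + \sum_{j > s} m_j \pi^{-1}(D_j \cap Y) + m_\infty \mathbb{P}(V),
\]
with $m_\infty := -(m_1 + \cdots + m_s + rd + d)$ chosen so that the generic fiber of $X_0 \to Y$ carries the $d$-canonical divisor $D_{d; m_1, \ldots, m_s}$. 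Let $D^{0\prime}$ be the divisor on $\Bl_{Y_0} X_0$ obtained by adding $m_e E_0$ to the strict transform of $D^0$, so that $(X_0, D^0, Y_0)$ plays the role of $(X, D, Y)$. By the localizability of $\phi$, each difference $\phi(\widetilde{D}_J) - \phi(D_J)$ equals the analogous difference in the model, since the pairs $(Y \cap D_J, N_{Y \cap D_J / D_J})$ coincide with their model counterparts by construction of $X_0$. Moreover $(E, \widetilde{D}|_E) \cong (E_0, \widetilde{D^0}|_{E_0})$, and since $Y_0 \cap \mathbb{P}(V) = \emptyset$, all strata of $D^0$ containing the $\mathbb{P}(V)$-component contribute nothing to the blow-up difference on the model side. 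Therefore
\[
\phi_d(X', D') - \phi_d(X, D) = \phi_d(\Bl_{Y_0} X_0, D^{0\prime}) - \phi_d(X_0, D^0).
\]

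Finally I evaluate both model terms. The pair $(X_0, D^0)$ is a fibration over $(Y, D_Y)$ with $d$-canonical fiber $(\CP^r, D_{d; m_1, \ldots, m_s})$, so Proposition \ref{prop-ProjBun} yields
\[
\phi_d(X_0, D^0) = \chi_d(Y, D_Y) \phi_d(\CP^r, D_{d; m_1, \ldots, m_s}).
\]
On the other side, $\Bl_{Y_0} X_0 \cong \mathbb{P}(\mathcal{O}_{\mathbb{P}(V)}(-1) \oplus \mathcal{O}_{\mathbb{P}(V)})$ is a $\CP^1$-bundle over $\mathbb{P}(V)$, and $(\Bl_{Y_0} X_0, D^{0\prime})$ fibers over $(\mathbb{P}(V), \widetilde{D^0}|_{E_0})$ with $d$-canonical fiber $(\CP^1, D_{d; m_e})$. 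Applying Proposition \ref{prop-ProjBun} to this $\CP^1$-fibration and then Lemma \ref{lem-ProjBun} to the intermediate $\CP^{r-1}$-fibration $\mathbb{P}(V) \to Y$ gives
\[
\phi_d(\Bl_{Y_0} X_0, D^{0\prime}) = \chi_d(Y, D_Y) \chi_d(\CP^{r-1}, D_{m_1, \ldots, m_s}) \phi_d(\CP^1, D_{d; m_e}).
\]
Subtracting produces the second identity of \eqref{eq-prop-Blowup}. For the first identity, substitute $\phi = \chi$: the two projective-space $\phi_d$-terms become $\chi_d$ of $d$-canonical divisors on $\CP^r$ and $\CP^1$, which vanish by Lemma \ref{lem-ProjSp}.

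The main obstacle is the stratum-by-stratum matching used in the second paragraph. For $J \subseteq \{1, \ldots, s\}$ it is immediate from $N_{Y_0/D^0_j} = V_j = N_{Y/D_j}$. For $J$ with nonempty $J_2 := J \cap \{s+1, \ldots, l\}$, one has $Y \cap D_J = Y \cap D_{J_2}$ by $Y \subseteq D_{J_1}$, and the normal bundle of $Y \cap D_J$ in $D_J$ is the restriction of $N_{Y/D_{J_1}}$ to $Y \cap D_{J_2}$ by transversality of $Y$ with the $D_j$ for $j > s$; the analogous identifications hold in the model by construction. A secondary subtlety is that my invocations of Proposition \ref{prop-ProjBun} here only actually need the \emph{fiber} (not the total space) to be $d$-canonical, which is precisely what its proof uses when killing the $\chi_d(Z, D_Z)$ term.
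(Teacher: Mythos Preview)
Your proof is correct and follows essentially the same route as the paper: your local model $(X_0,D^0,Y_0)$ coincides with the paper's $(W,D_W,\iota(Y))$ (note $m_\infty=-(m_e+2d)$), and the subsequent evaluation via Proposition~\ref{prop-ProjBun} on both $W$ and $W'=\Bl_{Y_0}X_0$, together with Lemma~\ref{lem-ProjBun} for $(E,D_E)$, is identical. You supply more detail on the stratum-by-stratum application of localizability (which the paper compresses into a one-line appeal to Definition~\ref{def-LocInv} and \eqref{eq-bl-relation}) and you correctly flag that Proposition~\ref{prop-ProjBun} only uses the fiberwise $d$-canonical condition---a point the paper's proof also relies on implicitly.
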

\begin{proof}
Denote by $\mathds{1}$ the trivial line bundle.
Set $W=\mathbb{P}(N_{Y/X}\oplus\mathds{1})$.
Let $\pi: W\to Y$ be the canonical projection.
Let $\iota: Y\hookrightarrow W$ be the inclusion by the zero section of $N_{Y/X}$.
Let $g: W' \to W$ be the blow-up along $\iota(Y)$.
Set
\begin{equation}
D_W = \pi^*(D_Y) - (m_e+2d)\mathbb{P}\big(N_{Y/X}\big) + \sum_{j=1}^s m_j \mathbb{P}\big(N_{Y/D_j}\oplus\mathds{1}\big) \;.
\end{equation}
Let $\widetilde{D}_W$ be the strict transformation of $D_W$.
We still use $E$ to denote the exceptional divisor of $g: W' \to W$.
Set $D_{W'} = \widetilde{D}_W + m_e E$.
By Definition \ref{def-LocInv} and \eqref{eq-bl-relation},
we have
\begin{align}
\label{eq1-pf-prop-Blowup}
\begin{split}
\chi_d(X',D')-\chi_d(X,D) & = \chi_d(W',D_{W'})-\chi_d(W, D_W) \;,\\
\phi_d(X',D')-\phi_d(X,D) & = \phi_d(W',D_{W'})-\phi_d(W, D_W) \;.
\end{split}
\end{align}
Note that $(W, D_W)$ is a fibration over $(Y, D_Y)$ with fiber $\big(\CP^r,D_{d;m_1,\cdots,m_s}\big)$ (in the sense explained before Lemma \ref{lem-ProjBun}),
by Lemma \ref{lem-ProjSp}, \ref{lem-ProjBun} and Proposition \ref{prop-ProjBun},
we have
\begin{align}
\label{eq2-pf-prop-Blowup}
\begin{split}
\chi_d(W, D_W) & = \chi_d(Y, D_Y) \chi_d\big(\CP^r,D_{d;m_1,\cdots,m_s}\big) = 0 \;,\\
\phi_d(W, D_W) & = \chi_d(Y, D_Y) \phi_d\big(\CP^r,D_{d;m_1,\cdots,m_s}\big) \;.
\end{split}
\end{align}
We denote $D_E = \widetilde{D}_W\big|_E$.
Note that $W'$ is a fibration over $Y$ with fiber $\Bl_0\CP^r$,
and $\Bl_0\CP^r$ is a fibration over $\CP^{r-1}$ with fiber $\CP^1$,
we can show that $(W', D_{W'})$ is fibration over $(E,D_E)$ with fiber $\big(\CP^1,D_{d;m_e}\big)$.
By  Lemma \ref{lem-ProjSp}, \ref{lem-ProjBun} and Proposition \ref{prop-ProjBun},
we have
\begin{align}
\label{eq3-pf-prop-Blowup}
\begin{split}
\chi_d(W', D_{W'}) & = \chi_d(E,D_E) \chi_d\big(\CP^1,D_{d;m_e}\big) = 0 \;,\\
\phi_d(W', D_{W'}) & = \chi_d(E,D_E) \phi_d\big(\CP^1,D_{d;m_e}\big) \;.
\end{split}
\end{align}
Note that $(E, D_E)$ is fibration over $(Y,D_Y)$ with fiber $\big(\CP^{r-1},D_{m_1,\cdots,m_s}\big)$,
by Lemma \ref{lem-ProjBun},
we have
\begin{equation}
\label{eq4-pf-prop-Blowup}
\chi_d(E, D_E) = \chi_d(Y,D_Y) \chi_d\big(\CP^{r-1},D_{m_1,\cdots,m_s}\big) \;.
\end{equation}
From \eqref{eq1-pf-prop-Blowup}-\eqref{eq4-pf-prop-Blowup},
we obtain \eqref{eq-prop-Blowup}.
This completes the proof.
\end{proof}

\section{BCOV invariants for pairs}
\label{ch-bcov}

In this section, we recall the construction of BCOV invariants, as well as their properties, such as their behavior under projective bundles and blow-ups. See \cite{z2} for more details. Moreover, in \S \ref{subsec:P1andP2}, we compute the BCOV invariants for projective spaces in dimension 1 and 2.

\subsection{BCOV invariants}

Let $X$ be a compact K{\"a}hler manifold.
Let $K_X$ be the canonical bundle of $X$.
Let $d\in\Z\backslash\{0\}$.
Let $\gamma\in\mathscr{M}(X,K_X^d)$ be an invertible element (in the commutative ring $\bigoplus_{k\in\Z}\mathscr{M}(X,K_X^k)$).
In other words,
$\gamma$ is non-zero on any connected component of $X$.
We denote
\begin{equation}
\div(\gamma) = D = \sum_{j=1}^l m_j D_j \;,
\end{equation}
where $m_j \in \Z\backslash\{0\}$
and $D_1,\cdots,D_l \subseteq X$ are mutually distinct prime divisors.

\begin{defn}
\label{def-cy-pair}
We call $(X,\gamma)$ a $d$-\emph{Calabi--Yau pair}
if $(X,\mathrm{div}(\gamma))$ satisfies the condition $(\star_d)$ in Definition \ref{def-ConditionStard}.
In particular,
if $X$ is Calabi--Yau and $\gamma \in H^0(X,K_X)$ is nowhere vanishing,
then $(X,\gamma)$ is a $1$-Calabi--Yau pair.
\end{defn}

Now we assume that $(X,\gamma)$ is a $d$-Calabi--Yau pair.

Let $D_J$ be as in \eqref{eq-def-wJ}.
For any $j\in J\subseteq\big\{1,\cdots,l\big\}$,
let $L_{J,j}$ be the normal line bundle of $D_J \hookrightarrow D_{J\backslash\{j\}}$.
For $J \subseteq\big\{1,\cdots,l\big\}$,
set
\begin{equation}
\label{eq-def-KJ}
K_J = K_X^d\big|_{D_J} \otimes \bigotimes_{j\in J} L_{J,j}^{-m_j}
= K_{D_J}^d \otimes \bigotimes_{j\in J} L_{J,j}^{-m_j-d} \;.
\end{equation}
which is a holomorphic line bundle over $D_J$.
In particular,
we have $K_\emptyset = K_X^d$.

Recall that $\mathrm{Res}_\cdot(\cdot)$ was defined in Definition \ref{def-res}.
By \eqref{eq-res-commute},
there exist
\begin{equation}
\Big(\gamma_J \in \mathscr{M}(D_J,K_J)\Big)_{J \subseteq\{1,\cdots,l\}}
\end{equation}
such that
\begin{equation}
\label{eq-def-gammaJ}
\gamma_\emptyset = \gamma \;,\hspace{5mm}
\gamma_J = \mathrm{Res}_{D_J}(\gamma_{J\backslash\{j\}})
\hspace{2.5mm} \text{ for } j\in J\subseteq\big\{1,\cdots,l\big\} \;.
\end{equation}

Let $\omega$ be a K{\"a}hler form on $X$.
Let $\big|\cdot\big|_{K_{D_J},\omega}$ be the metric on $K_{D_J}$ induced by $\omega$.
Let $\big|\cdot\big|_{L_{J,j},\omega}$ be the metric on $L_{J,j}$ induced by $\omega$.
Let $\big|\cdot\big|_{K_J,\omega}$ be the metric on $K_J$ induced by $\big|\cdot\big|_{K_{D_J},\omega}$ and $\big|\cdot\big|_{L_{J,j},\omega}$ via \eqref{eq-def-KJ}.

Let $g^{TD_J}_\omega$ be the metric on $TD_J$ induced by $\omega$.
Let $c_k\big(TD_J,g^{TD_J}_\omega\big)$
be $k$-th Chern form of $\big(TD_J,g^{TD_J}_\omega\big)$.
%Recall that $\gamma_J \in \mathscr{M}(D_J,K_J)$ was defined by \eqref{eq-def-gammaJ}.
Let $n$ be the dimension of $X$.
Let $|J|$ be the number of elements in $J$.
Set
\begin{equation}
\label{eq-def-aJ}
a_J(\gamma,\omega) =
\frac{1}{12} \int_{D_J} c_{n-|J|}\big(TD_J,g^{TD_J}_\omega\big)
\log \big|\gamma_J\big|^{2/d}_{K_J,\omega} \;.
\end{equation}

%Recall that $L_{J,j}$ is the normal line bundle of $D_J\hookrightarrow D_{J\backslash\{j\}}$.
We consider the short exact sequence of holomorphic vector bundles over $D_J$,
\begin{equation}
0 \rightarrow TD_J \rightarrow TD_{J\backslash\{j\}}\big|_{D_J} \rightarrow L_{J,j} \rightarrow 0 \;.
\end{equation}
Let $g^{TD_{J\backslash\{j\}}}_\omega$ be the metric on $TD_{J\backslash\{j\}}$ induced by $\omega$.
Let
\begin{equation}
\widetilde{c}\Big(TD_J,TD_{J\backslash\{j\}}\big|_{D_J},g^{TD_{J\backslash\{j\}}}_\omega\big|_{D_J}\Big)
\end{equation}
be the same Bott--Chern form as in \cite[\textsection 1.1]{z}.
Set
\begin{equation}
\label{eq-def-bIk}
b_{J,j}(\omega) =
\frac{1}{12} \int_{D_J}
\widetilde{c}\Big(TD_J,TD_{J\backslash\{j\}}\big|_{D_J},g^{TD_{J\backslash\{j\}}}_\omega\big|_{D_J}\Big) \;.
\end{equation}

Let $w_d^J$ be as in \eqref{eq-def-wJ}.
For ease of notations,
we denote $\tau_\mathrm{BCOV}(D_J,\omega) = \tau_\mathrm{BCOV}\big(D_J,\omega\big|_{D_J}\big)$.

The second author \cite[Definition 3.2]{z2} defined the following extended BCOV invariant.

\begin{defn}
\label{def-BCOV-Pair}
The BCOV invariant of a $d$-Calabi--Yau pair $(X,\gamma)$ is defined by
\begin{equation}
\label{eq-def-tau-gamma}
\tau_d(X,\gamma) = \sum_{J \subseteq\{1,\cdots,l\}}
w_d^J \bigg( \tau_\mathrm{BCOV}(D_J,\omega) - a_J(\gamma,\omega) - \sum_{j\in J} \frac{m_j+d}{d} b_{J,j}(\omega) \bigg) \;.
\end{equation}
It is shown in \cite[Theorem 3.1]{z2} that
$\tau_{d}(X, \gamma)$ is independent of $\omega$.
\end{defn}

\subsection{Projective spaces of dimension $1$ and $2$}
\label{subsec:P1andP2}

We identify $\CP^n$ with $\C^n \cup \CP^{n-1}$.
Let $(z_1,\cdots,z_n)\in\C^n$ be the affine coordinates.
For positive integers $m_1,\cdots,m_n$,
let $\gamma_{m_1,\cdots,m_n}\in\mathscr{M}\big(\CP^n,K_{\CP^n}^d\big)$ be such that
\begin{equation}
\label{eqn:gamma}
\gamma_{m_1,\cdots,m_n}\big|_{\C^n} = z_1^{m_1}\cdots z_n^{m_n} \big(\d z_1\wedge\cdots\wedge\d z_n\big)^d \;.
\end{equation}
Then $(\CP^n,\gamma_{m_1,\cdots,m_n})$ is a $d$-Calabi--Yau pair.

We denote
\begin{equation}
\label{eq-def-tau-P1}
\tau(\CP^n) = \tau_d\big(\CP^n,\gamma_{0,\cdots,0}\big) \;.
\end{equation}
By \cite[Proposition 3.3]{z2},
$\tau(\CP^n)$ is well-defined, i.e., independent of $d$.

\begin{thm}
\label{thm-tau-dim1}
For any $m\in\N$,
we have
\begin{equation}
\label{eq-thm-tau-dim1}
\tau_d\big(\CP^1,\gamma_m\big) = \tau(\CP^1) \;.
\end{equation}
In other words,
$\tau_d\big(\CP^1,\gamma_m\big)$ is independent of $m$.
\end{thm}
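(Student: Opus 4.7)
The plan is to exploit the $\omega$-independence of $\tau_d(\CP^1,\gamma)$ asserted in Definition \ref{def-BCOV-Pair} and evaluate both $\tau_d(\CP^1,\gamma_m)$ and $\tau(\CP^1) = \tau_d(\CP^1,\gamma_0)$ using the Fubini--Study K\"ahler form $\omega_{\mathrm{FS}}$. The essential point is that $\omega_{\mathrm{FS}}$ is invariant under the involution $\sigma \colon z \mapsto 1/z$, which exchanges the two candidate divisor points $\{0\}$ and $\{\infty\}$ and acts on $\log|z|^2$ by a sign. This symmetry is what forces the $m$-dependence to drop out.

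First I would enumerate the strata. The divisor is $\div(\gamma_m) = m\,[0] + (-m-2d)\,[\infty]$, so only $J = \emptyset, \{0\}, \{\infty\}$ contribute in Definition \ref{def-BCOV-Pair} (the two points being disjoint). The weights in Definition \ref{def-LocInv} are $w_d^\emptyset = 1$, $w_d^{\{0\}} = -m/(m+d)$, $w_d^{\{\infty\}} = -(m+2d)/(m+d)$; for the reference $\gamma_0$ only $J = \{\infty\}$ appears with weight $-2$. Since $\tau_\mathrm{BCOV}(\pt) = 0$, the point strata enter only through the Chern--Weil term $a_{\{j\}}(\gamma,\omega)$ and the Bott--Chern correction $b_{\{j\},j}(\omega)$.

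Next I would show term-by-term that the difference $\tau_d(\CP^1,\gamma_m) - \tau_d(\CP^1,\gamma_0)$ vanishes. The torsion $\tau_\mathrm{BCOV}(\CP^1,\omega_{\mathrm{FS}})$ is manifestly $m$-independent and cancels. Using Definition \ref{def-res} in the charts $z$ near $0$ and $w = 1/z$ near $\infty$, the residue sections are $(dz)^{m+d}$ and, up to sign, $(dw)^{-m-d}$; each has unit $\omega_{\mathrm{FS}}$-norm at the corresponding point, so $a_{\{j\}}(\gamma_m,\omega_{\mathrm{FS}}) = 0$ for every $m$ and $j \in \{0,\infty\}$. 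The bulk contribution is
\[
a_\emptyset(\gamma_m,\omega_{\mathrm{FS}}) - a_\emptyset(\gamma_0,\omega_{\mathrm{FS}}) = \frac{m}{12d}\int_{\CP^1} c_1(T\CP^1,\omega_{\mathrm{FS}})\,\log|z|^2,
\]
which vanishes under $\sigma$ since the Chern form is $\sigma$-invariant while $\log|z|^2$ is $\sigma$-anti-invariant. Finally, the same symmetry gives $b_{\{0\},0}(\omega_{\mathrm{FS}}) = b_{\{\infty\},\infty}(\omega_{\mathrm{FS}})$, and a short bookkeeping with the weights shows that the total Bott--Chern contribution to $\tau_d(\CP^1,\gamma_m) - \tau_d(\CP^1,\gamma_0)$ reduces to $(m/d)\bigl(b_{\{0\},0}(\omega_{\mathrm{FS}}) - b_{\{\infty\},\infty}(\omega_{\mathrm{FS}})\bigr) = 0$.

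The main obstacle is the coefficient and sign bookkeeping: one must choose the correct branch of Definition \ref{def-res} at $\infty$ (where the multiplicity $-m-2d$ forces the second case), identify the Hermitian metrics on the normal line bundles $L_{\{j\},j}$ induced by $\omega_{\mathrm{FS}}$ so as to verify the unit-norm claim for the residues, and assemble the several pieces so that the two $b$-terms combine with the precise coefficient $m/d$. Once the $\sigma$-symmetry is threaded through each ingredient, every contribution either vanishes outright or is paired with an equal $\gamma_0$-counterpart, yielding the claimed equality.
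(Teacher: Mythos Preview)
Your proposal is correct and follows essentially the same route as the paper: choose the Fubini--Study metric and use the $z\mapsto 1/z$ symmetry to see that $\log|dz|^2_0=\log|dw|^2_\infty=0$ and $\int_{\CP^1}c_1(T\CP^1,g^{T\CP^1})\log|z|^2=0$, which kills all $m$-dependent contributions. The only cosmetic difference is that the paper evaluates $\tau_d(\CP^1,\gamma_m)$ directly, whereas you compute the difference $\tau_d(\CP^1,\gamma_m)-\tau_d(\CP^1,\gamma_0)$; and you are more explicit about the Bott--Chern terms $b_{\{0\},0}$, $b_{\{\infty\},\infty}$, disposing of them via the same symmetry rather than folding them into the displayed formula.
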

\begin{proof}
Let $w=1/z$.
We have
\begin{equation}
\label{eq1-pf-thm-tau-dim1}
\gamma_m=z^m (\d z)^d=\frac{(-1)^d}{w^{m+2d}}(\d w)^d \;.
\end{equation}
We have $\div(\gamma_m) = m \{0\} - (m+2d) \{\infty\}$.
Recall that $\mathrm{Res}_\cdot(\cdot)$ was defined in Definition \ref{def-res}.
We have
\begin{equation}
\label{eq2-pf-thm-tau-dim1}
\mathrm{Res}_{\{0\}}(\gamma_m) = (\d z)^{m+d} \;,\hspace{5mm}
\mathrm{Res}_{\{\infty\}}(\gamma_m) = (-1)^d (\d w)^{-m-d} \;.
\end{equation}

Let $\omega$ be a K{\"a}hler form on $\CP^1$.
We will use the notations in \eqref{eq-def-tau-gamma}.
Since $D_J = \mathrm{pt}$ for $|J|=1$ and $D_J = \emptyset$ for $|J|>1$,
we have
\begin{equation}
\label{eq3a-pf-thm-tau-dim1}
b_{J,j}(\omega) = 0 \hspace{4mm} \text{for any } J \text{ and } j \in J \;,\hspace{5mm}
\tau_\mathrm{BCOV}(D_J,\omega) = 0 \hspace{4mm} \text{for } |J| \geqslant 1 \;.
\end{equation}
Let $g^{T\CP^1}$ (resp. $g^{T^*\CP^1}$) be the metric on $T\CP^1$ (resp. $T^*\CP^1$) induced by $\omega$.
Let $\big|\d z\big|$ (resp. $\big|\d w\big|$) be the norm of $\d z$ (resp. $\d w$) with respect to $g^{T^*\CP^1}$.
By \eqref{eq-def-tau-gamma} and \eqref{eq1-pf-thm-tau-dim1}-\eqref{eq3a-pf-thm-tau-dim1},
We have
\begin{align}
\label{eq3-pf-thm-tau-dim1}
\begin{split}
& \tau_d\big(\CP^1,\gamma_m\big) \\
& = \tau_\mathrm{BCOV}\big(\CP^1,\omega\big)
- \frac{1}{12} \int_{\CP^1}c_1\big(T\CP^1,g^{T\CP^1}\big) \Big( \log\big|\d z\big|^2 + \frac{m}{d} \log\big|z\big|^2 \Big) \\
& \hspace{5mm} + \frac{m}{12} \log\big|\d z\big|^2_0 - \frac{m+2d}{12} \log\big|\d w\big|^2_\infty \;,
\end{split}
\end{align}
where the second term corresponds to $a_\emptyset(\gamma,\omega)$
and the last two terms correspond to $a_J(\gamma,\omega)$ with $|J|=1$.

In the sequel,
we take the Fubini--Study metric on $\CP^1$, whose K{\"a}hler form is
\begin{equation}
\label{eq4-pf-thm-tau-dim1}
\omega =  \frac{i \d z\wedge \d\overline{z}}{\big(1+|z|^2\big)^2} \;.
\end{equation}
We have
\begin{equation}
\label{eq5-pf-thm-tau-dim1}
c_1\big(T\CP^1,g^{T\CP^1}\big) = \frac{\omega}{\pi} \;,\hspace{5mm}
\big|\d z\big|^2 = \big(1+|z|^2\big)^2 \;.
\end{equation}
By \eqref{eq4-pf-thm-tau-dim1} and \eqref{eq5-pf-thm-tau-dim1},
we have
\begin{equation}
\label{eq6-pf-thm-tau-dim1}
\log\big|\d z\big|^2_0 = \log\big|\d w\big|^2_\infty = 0 \;,\hspace{5mm}
\int_{\CP^1}c_1\big(T\CP^1,g^{T\CP^1}\big)\log\big|z\big|^2 = 0 \;.
\end{equation}
By \eqref{eq3-pf-thm-tau-dim1} and \eqref{eq6-pf-thm-tau-dim1},
we obtain \eqref{eq-thm-tau-dim1}.
This completes the proof.
\end{proof}

\begin{thm}
\label{thm-tau-dim2}
For any $m_1,m_2\in\N$,
we have
\begin{equation}
\label{eq-thm-tau-dim2}
\tau_{d}\big(\CP^2,\gamma_{m_1,m_2}\big)
= \tau(\CP^2) + \Big( \frac{3}{2} - \frac{m_1}{m_1+d} - \frac{m_2}{m_2+d} - \frac{m_1+m_2+3d}{m_1+m_2+2d} \Big) \tau(\CP^1) \;.
\end{equation}
\end{thm}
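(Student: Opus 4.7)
The plan is to follow the strategy of Theorem \ref{thm-tau-dim1}: by the $\omega$-independence of $\tau_d$, fix the Fubini--Study K\"ahler form $\omega$ on $\CP^2$, expand $\tau_d(\CP^2,\gamma_{m_1,m_2})$, $\tau(\CP^2) = \tau_d(\CP^2,\gamma_{0,0})$, and $\tau(\CP^1) = \tau_d(\CP^1,\gamma_0)$ all via Definition \ref{def-BCOV-Pair}, and then verify the identity term by term.

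First I identify $\div(\gamma_{m_1,m_2}) = -MH_0 + m_1 H_1 + m_2 H_2$ with $M = m_1 + m_2 + 3d$ and $H_0,H_1,H_2$ the coordinate lines, and compute the weights $w_d^J$ for $J \subseteq \{0,1,2\}$ from \eqref{eq-def-wJ}. The strata $D_J$ consist of $\CP^2$ itself, three copies of $\CP^1$, and three points. Since the Fubini--Study metric on $\CP^2$ restricts to Fubini--Study on each line $H_i$, every singleton stratum contributes the \emph{same} $\tau_{\mathrm{BCOV}}(\CP^1,\omega|_{\CP^1})$, while the point strata contribute zero BCOV torsion. Comparing the two expansions, the difference $\tau_d(\CP^2,\gamma_{m_1,m_2}) - \tau(\CP^2)$ has a BCOV-torsion piece
\[
\Big(w_d^{\{0\}} + w_d^{\{1\}} + w_d^{\{2\}} + \tfrac{3}{2}\Big)\,\tau_{\mathrm{BCOV}}(\CP^1, \omega|_{\CP^1}),
\]
where the $+\tfrac{3}{2}$ absorbs the weight $-\tfrac{3}{2}$ produced by the unique $\{0\}$-stratum of $\tau(\CP^2)$. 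A direct computation shows that this coefficient equals precisely the announced $\tfrac{3}{2} - \tfrac{m_1}{m_1+d} - \tfrac{m_2}{m_2+d} - \tfrac{m_1+m_2+3d}{m_1+m_2+2d}$.

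The residual pieces are the $a_J$ and $b_{J,j}$ correction integrals from \eqref{eq-def-aJ} and \eqref{eq-def-bIk}. Using $c_1(T\CP^1) = \omega_{\mathrm{FS}}/\pi$, the rotational vanishings $\int_{\CP^1} c_1 \log|z|^2 = 0$ and $\log|dz|^2_0 = \log|dw|^2_\infty = 0$ from \eqref{eq6-pf-thm-tau-dim1}, and their analogues on $\CP^2$, the bulk of the contributions either vanish or factor. What remains must be shown to match, up to the same overall coefficient, the non-BCOV correction terms in the expansion of $\tau(\CP^1)$. The flexibility provided by Theorem \ref{thm-tau-dim1}---namely, that $\tau(\CP^1)$ admits representatives $\tau_d(\CP^1,\gamma_m)$ for every $m \in \N$---allows us to choose the most convenient representative of $\tau(\CP^1)$ for each of the three lines $H_0,H_1,H_2$ when matching the corrections.

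The main obstacle is the explicit bookkeeping of the $a_J$ and $b_{J,j}$ corrections on the three coordinate lines and their intersection points, in particular the Bott--Chern forms attached to the short exact sequences $0 \to TH_i \to T\CP^2|_{H_i} \to N_{H_i/\CP^2} \to 0$ and their iterated restrictions to the points $H_i \cap H_j$. Symmetry under $(m_1,H_1) \leftrightarrow (m_2,H_2)$ and the explicit Fubini--Study formulas should drastically reduce the number of independent integrals to evaluate, but verifying that they combine to produce \emph{exactly} the announced coefficient of $\tau(\CP^1)$ with no leftover terms requires careful computation.
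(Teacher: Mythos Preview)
Your strategy is correct and matches the paper's approach: fix Fubini--Study on $\CP^2$, expand via Definition~\ref{def-BCOV-Pair}, and isolate the $(m_1,m_2)$-dependent part as a multiple of $\tau(\CP^1)$.

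The ``main obstacle'' you anticipate largely dissolves once you check the key vanishing (the paper's \eqref{eq11a-pf-thm-tau-dim2}): with Fubini--Study, \emph{all} Bott--Chern corrections vanish, $b_{J,j}(\omega)=0$ for $|J|=1,2$, and the point terms $a_J(\gamma,\omega)=0$ for $|J|=2$. After this, only the $a_\emptyset$ and three $a_{\{i\}}$ integrals survive. One small divergence: rather than choosing different representatives $\tau_d(\CP^1,\gamma_m)$ for the three lines, the paper uses $\tau_d(\CP^1,\gamma_0)$ uniformly, absorbing the tangential $\log|\d z|^2$ integral on each $H_i$ into $\tau(\CP^1)$ and leaving a leftover \emph{normal} integral $\int_{H_i} c_1(TH_i)\log|\d z_j|^2$. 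These three normal integrals are all equal by the $S_3$-symmetry of Fubini--Study (the paper's \eqref{eq22-pf-thm-tau-dim2}), and their combined coefficient works out to be independent of $m_1,m_2$; that is what pins down the $\tfrac{3}{2}$.
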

\begin{proof}
Let $[\xi_0:\xi_1:\xi_2]$ be homogenous coordinates on $\CP^2$.
Let $H_1 \subseteq \CP^2$ (resp. $H_2 \subseteq \CP^2$, $H_\infty \subseteq \CP^2$)
be defined by $\xi_1=0$ (resp. $\xi_2=0$, $\xi_0=0$).
Set
\begin{equation}
\label{eq01-pf-thm-tau-dim2}
z_1 = \xi_1/\xi_0 \;,\hspace{2.5mm}
z_2 = \xi_2/\xi_0 \;,\hspace{2.5mm}
w_0=\xi_0/\xi_2 \;,\hspace{2.5mm}
w_1=\xi_1/\xi_2 \;,\hspace{2.5mm}
t_0=\xi_0/\xi_1 \;,\hspace{2.5mm}
t_2=\xi_2/\xi_1 \;.
\end{equation}
Then $(z_1,z_2)$ (resp. $(w_0,w_1)$, $(t_0,t_2)$) are affine coordinates
on $\CP^2\backslash H_\infty$ (resp. $\CP^2\backslash H_2$, $\CP^2\backslash H_1$).
We have
\begin{align}
\label{eq02-pf-thm-tau-dim2}
\begin{split}
\gamma_{m_1,m_2} & = z_1^{m_1}z_2^{m_2}\big(\d z_1\wedge \d z_2\big)^d \\
& = \frac{w_1^{m_1}}{w_0^{m_1+m_2+3d}} \big(\d w_0\wedge \d w_1\big)^d
= \frac{t_2^{m_2}}{t_0^{m_1+m_2+3d}} \big(\d t_2\wedge \d t_0\big)^d \;.
\end{split}
\end{align}
We remark that $\div(\gamma_{m_1,m_2}) = m_1 H_1 + m_2 H_2 - (m_1+m_2+3d) H_\infty$.

Recall that $\mathrm{Res}_\cdot(\cdot)$ was defined in Definition \ref{def-res}.
We have
\begin{align}
\label{eq04-pf-thm-tau-dim2}
\begin{split}
\mathrm{Res}_{H_1}(\gamma_{m_1,m_2}) & = z_2^{m_2}\big(\d z_1\big)^{m_1+d}\big(\d z_2\big)^d \;,\\
\mathrm{Res}_{H_2}(\gamma_{m_1,m_2}) & = z_1^{m_1}\big(\d z_2\big)^{m_2+d}\big(\d z_1\big)^d \;,\\
\mathrm{Res}_{H_\infty}(\gamma_{m_1,m_2}) & = w_1^{m_1}\big(\d w_0\big)^{-m_1-m_2-2d}\big(\d w_1\big)^d \;,\\
\mathrm{Res}_{H_1 \cap H_2}\big(\mathrm{Res}_{H_1} (\gamma_{m_1,m_2})\big) & = \big(\d z_1\big)^{m_1+d}\big(\d z_2\big)^{m_2+d} \;,\\
\mathrm{Res}_{H_1 \cap H_\infty}\big(\mathrm{Res}_{H_\infty}(\gamma_{m_1,m_2})\big) & = \big(\d w_0\big)^{-m_1-m_2-2d}\big(\d w_1\big)^{m_1+d} \;,\\
\mathrm{Res}_{H_2 \cap H_\infty}\big(\mathrm{Res}_{H_\infty}(\gamma_{m_1,m_2})\big) & = \big(\d t_0\big)^{-m_1-m_2-2d}\big(\d t_2\big)^{m_2+d} \;.
\end{split}
\end{align}

We fix a Fubini--Study metric on $\CP^2$, whose K{\"a}hler form is as follows:
\begin{equation}
\label{eq11-pf-thm-tau-dim2}
\omega =
\frac{i \big(\d z_1\wedge \d\overline{z}_1 + \d z_2\wedge \d\overline{z}_2
-\overline{z}_1z_2\d z_1\wedge\d\overline{z}_2
-z_1\overline{z}_2\d\overline{z}_1\wedge\d z_2\big) }
{\big(1+|z_1|^2+|z_2|^2\big)^2} \;.
\end{equation}
We will use the notations in \eqref{eq-def-tau-gamma}.
With the K{\"a}hler form \eqref{eq11-pf-thm-tau-dim2},
we have
\begin{equation}
\label{eq11a-pf-thm-tau-dim2}
a_J(\gamma_{m_1,m_2},\omega) = b_{J,j}(\omega) = 0
\hspace{5mm} \text{for } |J| = 2 \;,\hspace{5mm}
b_{J,j}(\omega) = 0 \hspace{5mm} \text{for } |J| = 1 \;.
\end{equation}
By \eqref{eq-def-tau-gamma}, \eqref{eq11-pf-thm-tau-dim2}, \eqref{eq11a-pf-thm-tau-dim2}
and the fact that $\tau_\mathrm{BCOV}(\mathrm{pt}) = 0$,
we have

\begin{align}
\label{eq12-pf-thm-tau-dim2}
\begin{split}
& \tau_d\big(\CP^2,\gamma_{m_1,m_2}\big) \\
& = \tau_\mathrm{BCOV}\big(\CP^2,\omega\big)
- \frac{1}{12} \frac{1}{d} \int_{\CP^2}c_2\big(T\CP^2,g^{T\CP^2}\big)\log\Big|z_1^{m_1}z_2^{m_2}\big(\d z_1\wedge\d z_2\big)^d\Big|^2 \\
& \hspace{5mm} - \frac{m_1}{m_1+d} \bigg( \tau_\mathrm{BCOV}\big(H_1,\omega\big) \\
& \hspace{30mm} - \frac{1}{12} \frac{1}{d} \int_{H_1}c_1\big(TH_1,g^{TH_1}\big)\log\Big|z_2^{m_2}\big(\d z_1\big)^{m_1+d}\big(\d z_2\big)^d\Big|^2 \bigg) \\
& \hspace{5mm} - \frac{m_2}{m_2+d} \bigg( \tau_\mathrm{BCOV}\big(H_2,\omega\big) \\
& \hspace{30mm} - \frac{1}{12} \frac{1}{d} \int_{H_2}c_1\big(TH_2,g^{TH_2}\big)\log\Big|z_1^{m_1}\big(\d z_2\big)^{m_2+d}\big(\d z_1\big)^d\Big|^2 \bigg) \\
& \hspace{5mm} - \frac{m_1+m_2+3d}{m_1+m_2+2d} \bigg( \tau_\mathrm{BCOV}\big(H_\infty,\omega\big) \\
& \hspace{20mm} - \frac{1}{12} \frac{1}{d} \int_{H_\infty}c_1\big(TH_\infty,g^{TH_\infty}\big)\log\Big|w_1^{m_1}\big(\d w_0\big)^{-m_1-m_2-2d}\big(\d w_1\big)^d\Big|^2 \bigg) \;.
\end{split}
\end{align}
%Combining this with \eqref{eq-def-tau-P1}, \eqref{eq3-pf-thm-tau-dim1}, \eqref{eq6-pf-thm-tau-dim1}
By \eqref{eq-def-tau-P1}, \eqref{eq3-pf-thm-tau-dim1}, \eqref{eq6-pf-thm-tau-dim1} and \eqref{eq12-pf-thm-tau-dim2},
we have
\begin{align}
\label{eq13-pf-thm-tau-dim2}
\begin{split}
& \tau_d\big(\CP^2,\gamma_{m_1,m_2}\big) \\
& = \tau_\mathrm{BCOV}\big(\CP^2,\omega\big)
- \frac{1}{12} \int_{\CP^2}c_2\big(T\CP^2,g^{T\CP^2}\big)\log\big|\d z_1\wedge\d z_2\big|^2 \\
& \hspace{5mm} - \frac{1}{12} \frac{1}{d} \int_{\CP^2}c_2\big(T\CP^2,g^{T\CP^2}\big) \Big( m_1 \log\big|z_1\big|^2 + m_2 \log\big|z_2\big|^2 \Big) \\
& \hspace{5mm} - \frac{m_1}{m_1+d} \bigg( \tau(\CP^1) - \frac{1}{12} \frac{m_1+d}{d} \int_{H_1}c_1\big(TH_1,g^{TH_1}\big)\log\big|\d z_1\big|^2 \bigg) \\
& \hspace{5mm} - \frac{m_2}{m_2+d} \bigg( \tau(\CP^1) - \frac{1}{12} \frac{m_2+d}{d} \int_{H_2}c_1\big(TH_2,g^{TH_2}\big)\log\big|\d z_2\big|^2 \bigg) \\
& \hspace{5mm} - \frac{m_1+m_2+3d}{m_1+m_2+2d} \bigg( \tau(\CP^1) + \frac{1}{12} \frac{m_1+m_2+2d}{d} \int_{H_\infty}c_1\big(TH_\infty,g^{TH_\infty}\big)\log\big|\d w_0\big|^2 \bigg) \;.
\end{split}
\end{align}

Similarly to \eqref{eq6-pf-thm-tau-dim1},
we have
\begin{equation}
\label{eq21-pf-thm-tau-dim2}
\int_{\CP^2}c_2\big(T\CP^2,g^{T\CP^2}\big) \log\big|z_1\big|^2 =
\int_{\CP^2}c_2\big(T\CP^2,g^{T\CP^2}\big) \log\big|z_2\big|^2 = 0 \;.
\end{equation}
On the other hand,
by \eqref{eq11-pf-thm-tau-dim2},
we have
\begin{align}
\label{eq22-pf-thm-tau-dim2}
\begin{split}
\int_{H_1}c_1\big(TH_1,g^{TH_1}\big)\log\big|\d z_1\big|^2
& = \int_{H_2}c_1\big(TH_2,g^{TH_2}\big)\log\big|\d z_2\big|^2 \\
& = \int_{H_\infty}c_1\big(TH_\infty,g^{TH_\infty}\big)\log\big|\d w_0\big|^2 \;.
\end{split}
\end{align}
From \eqref{eq13-pf-thm-tau-dim2}-\eqref{eq22-pf-thm-tau-dim2},
we obtain \eqref{eq-thm-tau-dim2}.
This completes the proof.
\end{proof}

\subsection{Projective bundle}
\label{subsec:ProjBundle}

Let $Y$ be a compact K{\"a}hler manifold.
Let $N$ be a holomorphic vector bundle of rank $r$ over $Y$.
Set
\begin{equation}
\label{eqn:X=CompletedProjBun}
X = \mathbb{P}(N\oplus\mathds{1}) \;.
\end{equation}
Let $\mathcal{N}$ be the total space of $N$.
We have $X = \mathcal{N} \cup \mathbb{P}(N)$.

Let $s\in\{0,\cdots,r\}$.
Let $\big(L_j\big)_{j=1,\cdots,s}$ be holomorphic line bundles over $Y$.
We assume that
there is a surjective map
\begin{equation}
\label{eq-Nsurj}
N \rightarrow L_1 \oplus \cdots \oplus L_s \;.
\end{equation}
Let $N^*$ be the dual of $N$.
Taking the dual of \eqref{eq-Nsurj},
we get
\begin{equation}
\label{eq-Ninj}
L_1^{-1} \oplus \cdots \oplus L_s^{-1} \hookrightarrow N^* \;.
\end{equation}
Let $m_1,\cdots,m_s$ be positive integers.
Let $d\in\N\backslash\{0\}$.
Let
\begin{equation}
\label{eq-def-gammaY}
\gamma_Y \in \mathscr{M}\big(Y,K_Y^d \otimes (\det N^*)^d \otimes L_1^{-m_1} \otimes \cdots \otimes L_s^{-m_s}\big)
\end{equation}
be an invertible element.
We assume that
\begin{itemize}
\item[-] $\div(\gamma_Y)$ is of simple normal crossing support;
\item[-] $\div(\gamma_Y)$ does not possess components of multiplicity $-d$.
\end{itemize}
We denote $m = m_1+\cdots+m_s$.
Let $S^mN^*$ be the $m$-th symmetric tensor power of $N^*$.
By \eqref{eq-Ninj} and \eqref{eq-def-gammaY},
we have
\begin{equation}
\label{eq-gammaY}
\gamma_Y \in \mathscr{M}\big(Y,K_Y^d \otimes (\det N^*)^d \otimes S^m N^*\big) \;.
\end{equation}
Let $\pi: X = \mathbb{P}(N\oplus\mathds{1}) \rightarrow Y$ be the canonical projection.
We have
\begin{equation}
\label{eq-KX-pi}
K_X\big|_\mathcal{N} = \pi^* \big( K_Y\otimes \det N^* \big) \;.
\end{equation}
We may view a section of $S^mN^*$ as a function on $\mathcal{N}$.
By \eqref{eq-gammaY} and \eqref{eq-KX-pi},
$\gamma_Y$ may be viewed as an element of $\mathscr{M}(\mathcal{N},K_X^d)$.
Let $\gamma_X\in\mathscr{M}(X,K_X^d)$ be such that $\gamma_X\big|_\mathcal{N} = \gamma_Y$.

For $j=1,\cdots,s$,
set
\begin{equation}
N_j = \mathrm{Ker}\big(N\rightarrow L_j\big) \;,\hspace{5mm}
X_j = \mathbb{P}(N_j\oplus\mathds{1}) \subseteq X \;,\hspace{5mm}
X_\infty = \mathbb{P}(N) \subseteq X \;.
\end{equation}
We have
\begin{equation}
\div(\gamma_X) =
\pi^* \div(\gamma_Y) - (m+rd+d) X_\infty + \sum_{j=1}^s m_j X_j \;.
\end{equation}
Hence $(X,\gamma_X)$ is a $d$-Calabi--Yau pair.

Let $Z$ be the fiber of $\pi: X \rightarrow Y$.
Let $U\subseteq Y$ be a small open subset.
We fix an identification $\pi^{-1}(U) = U \times Z$ such that
there exist $\gamma_U\in \mathscr{M}(U,K_U^d)$ and $\gamma_Z\in\mathscr{M}(Z,K_Z^d)$ satisfying
\begin{equation}
\label{eqn:GammaX}
\gamma_X\big|_{\pi^{-1}(U)} = \mathrm{pr}_1^*(\gamma_U) \otimes \mathrm{pr}_2^*(\gamma_Z) \;.
\end{equation}
Then $(Z,\gamma_Z)$ is a $d$-Calabi--Yau pair.

The following theorem was proved by the second author \cite[Theorem 3.6]{z2}

\begin{thm}
\label{thm-proj-bd}
The following identity holds,
\begin{equation}
\tau_d\big(X,\gamma_X\big)
= \chi_d\big(Y,\gamma_Y\big) \tau_{d}\big(Z,\gamma_Z\big) \;.
\end{equation}
\end{thm}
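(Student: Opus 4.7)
The plan is to mimic the strategy of Proposition \ref{prop-ProjBun} at the analytic level, exploiting the compatible behaviour of BCOV torsion under projective fibrations. Unfolding Definition \ref{def-BCOV-Pair}, the invariant $\tau_d(X,\gamma_X)$ expands as a weighted sum over subsets $J$ of the prime components of $\div(\gamma_X)$. These components split naturally into \emph{horizontal} ones (among $X_1,\dots,X_s,X_\infty$) and \emph{vertical} ones (pull-backs of the components of $\div(\gamma_Y)$), so every $J$ decomposes uniquely as a disjoint union $J_Y\sqcup J_Z$, with the weight factoring as $w_d^J = w_d^{J_Y}\,w_d^{J_Z}$. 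Each stratum $D_J^X$ is itself a projective sub-bundle over the base stratum $D^Y_{J_Y}$, with typical fibre the fibre-stratum $D^Z_{J_Z}$; moreover the iterated residues of $\gamma_X$ are compatible with the residues of $\gamma_Y$ on the base and of $\gamma_Z$ on the fibre, because the decomposition $\gamma_X|_{\pi^{-1}(U)}=\mathrm{pr}_1^*\gamma_U\otimes\mathrm{pr}_2^*\gamma_Z$ is preserved when one takes residues along horizontal versus vertical components.

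The analytic heart of the proof is a fibration identity for the BCOV torsion. Choose on each stratum a Kähler form obtained by combining the pull-back of a Kähler form from the base with a fibrewise Fubini--Study form (as in the model computations of Theorems \ref{thm-tau-dim1} and \ref{thm-tau-dim2}). Apply the Berthomieu--Bismut submersion formula \cite{bb} to each Dolbeault complex of $\bigwedge^p T^*D_J^X$ and reassemble via \eqref{eq-def-lambda}; this should yield
\begin{equation*}
\tau_{\mathrm{BCOV}}(D^X_J,\omega_X) = \chi(D^Z_{J_Z})\,\tau_{\mathrm{BCOV}}(D^Y_{J_Y},\omega_X) + \chi(D^Y_{J_Y})\,\tau_{\mathrm{BCOV}}(D^Z_{J_Z},\omega_X) + R_J,
\end{equation*}
where $R_J$ is an explicit fibre-integral of Chern and Bott--Chern forms. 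With the same metric choice the correction terms in \eqref{eq-def-aJ} and \eqref{eq-def-bIk} decompose analogously: the Chern forms of $TD^X_J$ split through the relative exact sequence $0\to\pi^*TD^Y_{J_Y}\to TD^X_J\to T_\pi D^X_J\to 0$, the norm $|\gamma_{X,J}|_{K_J,\omega_X}$ factors as $|\gamma_{Y,J_Y}|\cdot|\gamma_{Z,J_Z}|$ up to a curvature correction, and the Bott--Chern forms for $0\to TD^X_J\to TD^X_{J\setminus\{j\}}|_{D^X_J}\to L_{J,j}\to 0$ restrict compatibly along the fibration.

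Summing over $J$ and using multiplicativity $w_d^J=w_d^{J_Y}w_d^{J_Z}$, one reaches a formal identity of the shape
\begin{equation*}
\tau_d(X,\gamma_X) = \chi_d(Y,\gamma_Y)\,\tau_d(Z,\gamma_Z) + \bigl(\text{``}\tau_d(Y,\gamma_Y)\text{''}\bigr)\,\chi_d(Z,\gamma_Z) + B,
\end{equation*}
where ``$\tau_d(Y,\gamma_Y)$'' denotes the formal combination obtained by symmetry (recall that $\gamma_Y$ is not literally a $d$-canonical form on $Y$) and $B$ collects the residual $R_J$ contributions. Since $\gamma_Z$ is a $d$-canonical form on $Z\cong\CP^r$, Lemma \ref{lem-ProjSp} forces $\chi_d(Z,\gamma_Z)=0$, killing the second term; the remainder $B$, being a $w_d^{J_Z}$-weighted fibre integral of Chern-type forms on $(Z,\gamma_Z)$, cancels by the same $d$-canonicity of the fibre, exactly as in the proof of Proposition \ref{prop-ProjBun}. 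The main obstacle is the analytic step: one must verify that the remainders $R_J$ produced by the Berthomieu--Bismut submersion formula match precisely the Bott--Chern corrections $a_J$ and $b_{J,j}$ built into Definition \ref{def-BCOV-Pair}, so that the only surviving ``topological'' prefactor is $\chi_d(Y,\gamma_Y)$. This is where the normalisation in \eqref{eq-def-tau-gamma} proves to be exactly the right one, and is essentially the content of Zhang \cite[Theorem 3.6]{z2}.
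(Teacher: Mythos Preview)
The paper does not supply its own proof of this theorem; it simply records the statement and cites Zhang \cite[Theorem 3.6]{z2}. So there is no in-paper argument to compare against.

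Your proposal is a reasonable heuristic outline of how such a proof plausibly proceeds (decompose strata into horizontal and vertical parts, apply a Berthomieu--Bismut-type submersion formula to each stratum, and use $\chi_d(Z,\gamma_Z)=0$ from Lemma \ref{lem-ProjSp} to kill the cross term). However, as written it is not a self-contained proof, and in fact it is circular: in the last sentence you acknowledge that the crucial analytic step---verifying that the anomaly remainders $R_J$ from the submersion formula match exactly the Bott--Chern corrections $a_J$ and $b_{J,j}$ built into Definition \ref{def-BCOV-Pair}---``is essentially the content of Zhang \cite[Theorem 3.6]{z2}''. But that reference \emph{is} the theorem you are trying to prove. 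So your proposal, stripped of the heuristics, reduces to the same citation the paper makes.

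If you want to turn this into an actual proof, the missing content is precisely that analytic verification. The claim that the remainder $B$ ``cancels by the same $d$-canonicity of the fibre, exactly as in the proof of Proposition \ref{prop-ProjBun}'' is not justified: Proposition \ref{prop-ProjBun} is a purely topological statement about log-type localizable invariants, and its proof uses nothing beyond the multiplicativity of $\chi$ and the vanishing $\chi_d(Z,D_Z)=0$. The analytic remainders coming from Berthomieu--Bismut involve specific characteristic forms (torsion forms, Bott--Chern secondary classes) that do not obviously organize themselves into a log-type invariant of the fibre, so one cannot invoke Proposition \ref{prop-ProjBun} to conclude their vanishing. That identification is the real work in \cite{z2}.
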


\subsection{Blow-up}

Let $(X,\gamma)$ be a $d$-Calabi--Yau pair.
We denote
\begin{equation}
\div(\gamma) = D = \sum_{j=1}^l m_jD_j \;.
\end{equation}

Let $Y \subseteq X$ be a connected complex submanifold
intersecting $D_1,\cdots,D_l$ transversally (see Definition \ref{def-ti}).
Assume that
for $j\in\{1,\cdots,l\}$ satisfying $Y \subseteq D_j$,
we have $m_j > 0$.
Let $r$ be the codimension of $Y\subseteq X$.
Let $s$ be the number of $D_j$ containing $Y$.
We have $s\leqslant r$.
Without loss of generality,
we assume that
\begin{equation}
Y \subseteq D_j \hspace{2.5mm} \text{for } j=1,\cdots,s \;;\hspace{5mm}
Y \nsubseteq D_j \hspace{2.5mm} \text{for } j=s+1,\cdots,l \;.
\end{equation}

Let $f: X' \rightarrow X$ be the blow-up along $Y$.
Let $D_j' \subseteq X'$ be the strict transformation of $D_j \subseteq X$.
Set $E = f^{-1}(Y)$.
We denote $D'=\div(f^*\gamma)$.
We denote
\begin{equation}
m_0 = m_1 + \cdots + m_s + rd -d \;.
\end{equation}
We have
\begin{equation}
D' = m_0 E + \sum_{j=1}^l m_j D_j' \;.
\end{equation}
Hence $(X',f^*\gamma)$ is a $d$-Calabi-Yau pair.

Set
\begin{equation}
D_Y = \sum_{j=s+1}^l m_j (D_j \cap Y) \;,\hspace{5mm}
D_E = \sum_{j=s+1}^l m_j (D_j' \cap E) \;,
\end{equation}
which are divisors with simple normal crossing support.

Now we introduce a pair involving projective spaces, which is a slightly different way of denoting the one defined in Equation \eqref{eqn:gamma}. More precisely: we identify $\CP^r$ with $\C^r \cup \CP^{r-1}$.
Let $(z_1,\cdots,z_r)\in\C^r$ be the coordinates.
Let $\gamma_{r,m_1,\cdots,m_s}\in\mathscr{M}(\CP^r,K_{\CP^r}^d)$ be such that
\begin{equation}
\label{eq-gammaZ}
\gamma_{r,m_1,\cdots,m_s} \big|_{\C^r} = \big(\d z_1\wedge\cdots\wedge\d z_r \big)^d \prod_{j=1}^s z_j^{m_j} \;.
\end{equation}
Then $(\CP^r,\gamma_{r,m_1,\cdots,m_s})$ is a $d$-Calabi-Yau pair. Note that $\gamma_{r,m_1,\cdots,m_s}$ is precisely $\gamma_{m_1, \cdots, m_s, 0,0, \cdots, 0}$ with $r-s$ zeros in the end, in the notation of \eqref{eqn:gamma}.

The following blow-up formula was proved by the second author \cite[Theorem 0.5]{z2}.

\begin{thm}
\label{thm-bl}
The following identity holds,
\begin{align}
\label{eq-thm-bl}
\begin{split}
& \tau_d(X',f^*\gamma) - \tau_d(X,\gamma) \\
& = \chi_d(E,D_E) \tau_d\big(\CP^1,\gamma_{1,m_0}\big)
- \chi_d(Y,D_Y) \tau_d\big(\CP^r,\gamma_{r,m_1,\cdots,m_s}\big) \;.
\end{split}
\end{align}
\end{thm}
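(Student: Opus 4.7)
The structure of \eqref{eq-thm-bl} mirrors exactly the specialization of Proposition~\ref{prop-Blowup} to $\phi=\tau_d$: the topological Euler characteristics of strata inside $N_{Y/X}$ get packaged into $\chi_d(Y,D_Y)$ and $\chi_d(E,D_E)$, while the universal projective contributions $\phi_d(\CP^r,\cdots)$ and $\phi_d(\CP^1,\cdots)$ now take the analytic form $\tau_d(\CP^r,\gamma_{r,m_1,\cdots,m_s})$ and $\tau_d(\CP^1,\gamma_{1,m_0})$. My plan is therefore to mimic the argument of Proposition~\ref{prop-Blowup}, upgrading each topological ingredient (additivity, log-type property) to its analytic counterpart for $\tau_d$.

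\textbf{Step 1 (deformation to the normal cone).} Following Baum--Fulton--MacPherson, I set $W=\Bl_{Y\times\{0\}}(X\times\C)$ with projection $\pi\colon W\to\C$. The generic fiber of $\pi$ is isomorphic to $X$, while the central fiber is $X'\cup_E P$ with $P=\mathbb{P}(N_{Y/X}\oplus\mathds{1})$. The section $\gamma$ pulls back to a meromorphic $d$-canonical section on $W$, producing a degenerating family of $d$-Calabi--Yau pairs. Applying the Berthomieu--Bismut submersion formula to $\pi$ over $\C\setminus\{0\}$, and combining it with Bismut's blow-up formula and the Bismut--Lebeau immersion formula at $t=0$, transfers the claim about $\tau_d(X,\gamma)-\tau_d(X',f^*\gamma)$ to the analogous statement for the model pair on $P$ and its blow-up $\Bl_Y P$.

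\textbf{Step 2 (reduction via iterated projective bundles).} Both $P$ and $\Bl_Y P$ are iterated projective bundles over $Y$: the former with fiber $\CP^r$, the latter realized as a $\CP^1$-bundle over a $\CP^{r-1}$-bundle over $E$, whose fiberwise structure is $\Bl_0\CP^r$. Applying Theorem~\ref{thm-proj-bd} successively factors $\tau_d(P,\gamma_P)$ as $\chi_d(Y,D_Y)\,\tau_d(\CP^r,\gamma_{r,m_1,\cdots,m_s})$, and $\tau_d(\Bl_Y P,f^*\gamma_P)$ as $\chi_d(E,D_E)\,\tau_d(\CP^1,\gamma_{1,m_0})$. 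The residual contributions along the auxiliary $\CP^{r-1}$-base of $\Bl_0\CP^r$ are absorbed using Theorem~\ref{thm-tau-dim1} (which makes $\tau_d$ on $\CP^1$ blind to the multiplicity $m_0$) together with a higher-dimensional analog of Theorem~\ref{thm-tau-dim2}, leaving precisely the right-hand side of \eqref{eq-thm-bl}.

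\textbf{Main obstacle.} The heavy lifting is in Step~1. Because $\pi^{-1}(0)=X'\cup_E P$ is reducible, the submersion and immersion formulas must be combined near the degeneration with careful control of the residual terms produced by each. The Bott--Chern corrections $a_J(\gamma,\omega)$ and $b_{J,j}(\omega)$ entering Definition~\ref{def-BCOV-Pair} must be matched stratum-by-stratum with the anomaly contributions coming from Bismut's formulas, and numerous auxiliary Chern--Weil integrals have to be explicitly identified and cancelled. Showing that the remaining terms reassemble cleanly into $\chi_d(E,D_E)\tau_d(\CP^1,\gamma_{1,m_0}) - \chi_d(Y,D_Y)\tau_d(\CP^r,\gamma_{r,m_1,\cdots,m_s})$, with no leftover Bott--Chern contribution, is the core technical challenge.
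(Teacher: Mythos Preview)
The paper does not prove Theorem~\ref{thm-bl} at all: it is quoted verbatim from Zhang \cite[Theorem~0.5]{z2}, and the only information the present paper supplies is the list of ingredients in the introduction (deformation to the normal cone, the Bismut--Lebeau immersion formula, the Berthomieu--Bismut submersion formula, Bismut's blow-up formula for Quillen metrics, and Bismut's comparison of holomorphic and de Rham torsion). Your outline is consistent with that list and with the architecture visible in Remark~\ref{rem-bl}, so at the level of strategy you are on the same track as the cited proof.

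Two comments on the details of your sketch. First, in Step~2 you do not need Theorem~\ref{thm-tau-dim1} or any analog of Theorem~\ref{thm-tau-dim2}; those results are used only later, in the proof of Theorem~\ref{thm-tau-b}. Once Step~1 reduces you to $P=\mathbb{P}(N_{Y/X}\oplus\mathds{1})$ and $\Bl_YP$, a single application of Theorem~\ref{thm-proj-bd} to each (exactly as in Remark~\ref{rem-bl}) already produces the two terms on the right-hand side of \eqref{eq-thm-bl}, with no residual projective contribution to absorb. Second, your description of Step~1 conflates two distinct things: the degeneration $X\rightsquigarrow X'\cup_E P$ does not by itself compare $\tau_d(X)-\tau_d(X')$ with $\tau_d(P)-\tau_d(\Bl_YP)$. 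What one actually needs is the analytic analog of the localizable identity \eqref{eq-bl-relation} for $\tau_d$, i.e.\ $\tau_d(X',f^*\gamma)-\tau_d(X,\gamma)=\tau_d(\Bl_YP,\cdot)-\tau_d(P,\cdot)$; establishing this is where the full strength of Bismut's blow-up formula \cite{b97} and the torsion comparison \cite{b04} enter, and it is indeed the substantial part of the argument in \cite{z2}. Your ``main obstacle'' paragraph identifies this correctly, but be aware that what you have written is a plan rather than a proof: the stratum-by-stratum matching of Bott--Chern anomalies is a lengthy computation carried out in \cite{z2} and not reproducible from the hints given here.
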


\begin{rem}
\label{rem-bl}
Keep the notation as is  in Theorem \ref{thm-bl}.
Let $g: \Bl_0\CP^r \rightarrow \CP^r$ be the blow-up along $0\in\C^r\subseteq\CP^r$.
Since $\Bl_0\CP^r$ is a $\CP^1$-bundle over $\CP^{r-1}$, apply Theorem \ref{thm-proj-bd}, with $N=L=\mathcal{O}_{\CP^{r-1}}(-1)$,
\begin{equation}
\tau_d\big(\Bl_0\CP^r,g^*\gamma_{m_1,\cdots,m_s}\big) =
\chi_d\big(\CP^{r-1},D_{m_1,\cdots,m_s}\big) \tau_d\big(\CP^1,\gamma_{m_0}\big) \;.
\end{equation}
Note that $\chi_d(E,D_E) = \chi_d(Y,D_Y)\chi_d\big(\CP^{r-1},D_{m_1,\cdots,m_s}\big)$,
we could reinterpret Theorem \ref{thm-bl} as follows,
\begin{align}
\begin{split}
& \tau_d\big(X',f^*\gamma_X\big) - \tau_d\big(X,\gamma_X\big) \\
& = \chi_d\big(Y,D_Y\big)
\Big( \tau_d\big(\Bl_0\CP^r,g^*\gamma_{m_1,\cdots,m_s}\big)
- \tau_d\big(\CP^r,\gamma_{m_1,\cdots,m_s}\big) \Big) \;.
\end{split}
\end{align}
\end{rem}

\section{Motivic integration and BCOV invariants}
\label{ch-motivic}
In this section,
we use the theory of motivic integration to explain our key construction, namely, the BCOV invariant for pairs $\tau(X, \gamma)$.
We stress the fact that this section is purely heuristic and logically independent of the rest of the paper.

\subsection{Motivic integration}
\label{subsect:MotInt}
We consider an $n$-dimensional smooth complex algebraic variety $X$
and an effective divisor with simple normal crossing support $D=\sum_{j=1}^l m_jD_j$ on $X$.
Denote by $\mathcal{L}_{\infty}(X)$ the space of formal arcs in $X$,
that is,
the projective limit of jet schemes $\mathcal{L}_n(X):=X\left(\C[t]/(t^{n+1})\right)$ (see \cite[\S 1]{DenefLoeser99Inv}).
Let
\begin{equation}
\operatorname{ord}_{D}: \mathcal{L}_{\infty}(X)\to \N\cup\{+\infty\}
\end{equation}
be the function sending a formal arc to its intersection number with the divisor $D$.

Let $\Var_\C$ be the category of complex algebraic varieties.
The Grothendieck group of complex algebraic varieties, denoted by $K_0(\Var_\C)$,
is the free abelian group generated by the isomorphism classes of objects in $\Var_\C$,
modulo the \emph{scissor relation}:
\begin{equation}
\label{eq-scissor}
[X] = [Y] + [X\backslash Y] \hspace{2.5mm} \text{for any } X \text{ and any closed subvariety } Y \subseteq X \;.
\end{equation}
$K_{0}(\Var_{\C})$ is endowed with a natural ring structure given by the fiber product.

Let $\mathbb{L}$ be the class of the affine line.
We denote $\mathcal{M} = K_{0}(\Var_{\C})[\mathbb{L}^{-1}]$,
the localization of $K_0(\Var_\C)$ with respect to the multiplicative system $\big\{\mathbb{L}^k\big\}_{k\in \N}$.
For any integer $i$,
let $F^i\mathcal{M} \subseteq \mathcal{M}$ be the subgroup generated by elements of the form $\mathbb{L}^{-m}[Y]$ with $m-\dim(Y)\geqslant i$.
Then $F^\bullet$ is a filtration on $\mathcal{M}$.
Let $\widehat{\mathcal{M}}$ be the completion of $\mathcal{M}$ with respect to $F^{\bullet}$.
The \emph{motivic Igusa zeta function}\footnote{It is usually denoted by $Z(X, \mathcal{I}_{D}; T)$,
where $\mathcal{I}_{D}=\mathcal{O}_{X}(-D)$ is the ideal sheaf of $D$.} is by definition
\begin{equation}
Z(X, D; T):=\int_{\mathcal{L}_{\infty}(X)}T^{\operatorname{ord_{D}}}\d\mu \in \widehat{\mathcal{M}}[[T]] \;,
\end{equation}
where $\mu$ is the motivic measure constructed by Kontsevich \cite{Kontsevich}
and Denef--Loeser \cite[Definition-Proposition 3.2]{DenefLoeser99Inv}.

The following theorem gives a formula for $Z(X, D; T)$,
see \cite[Theorem 3.3.4]{MotIntBook}.

\begin{thm}
The following identity holds,
\begin{equation}
\label{eq-MotIgusa}
Z(X,D;T) = \sum_{J\subseteq\{1,\dots,l\}} \mathbb{L}^{|J|-n}\left(\prod_{j\in J} \frac{1-T^{-m_j}}{\mathbb{L} T^{-m_j}-1}\right) [D_J] \;,
\end{equation}
where $D_{J}=\bigcap_{j\in J}D_j$ with the convention that $D_{\emptyset}=X$.
\end{thm}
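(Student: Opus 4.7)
The plan is to compute $Z(X,D;T)$ by stratifying the arc space $\mathcal{L}_\infty(X)$ according to the order of contact of each arc with each prime component $D_j$, to evaluate the motivic measure of each stratum in local SNC coordinates, to sum the resulting multi-geometric series, and finally to apply a M\"obius inversion to reorganize the answer in terms of the closed strata $D_J$.

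More precisely, for each $\mathbf{n}=(n_1,\dots,n_l)\in\N^l$ set
\begin{equation}
\mathcal{L}_\mathbf{n} := \bigl\{\varphi\in\mathcal{L}_\infty(X) : \mathrm{ord}_{D_j}(\varphi)=n_j \text{ for all } j\bigr\}.
\end{equation}
The arcs entirely contained in some $D_j$ form a cylinder of infinite codimension, hence a set of zero motivic measure, so up to a negligible set $\mathcal{L}_\infty(X)$ is the disjoint union of the $\mathcal{L}_\mathbf{n}$. On each $\mathcal{L}_\mathbf{n}$ the function $\mathrm{ord}_D$ is identically $\sum_j m_j n_j$, so
\begin{equation}
Z(X,D;T) = \sum_{\mathbf{n}\in\N^l} \mu(\mathcal{L}_\mathbf{n})\, T^{\sum_j m_j n_j}.
\end{equation}

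The technical heart is the local evaluation of $\mu(\mathcal{L}_\mathbf{n})$. Working in SNC coordinates in which the components of $D$ meeting the chart are coordinate hyperplanes $\{z_j=0\}$, one expresses $\mathcal{L}_\mathbf{n}$ as a cylinder whose base is constructed from the open stratum $D^\circ_{J(\mathbf{n})} := D_{J(\mathbf{n})}\setminus\bigcup_{k\notin J(\mathbf{n})}D_k$, where $J(\mathbf{n})=\{j:n_j>0\}$: the leading Taylor coefficient of $\varphi^*z_j$ is a nonzero scalar for each $j\in J(\mathbf{n})$, while the remaining coefficients up to the relevant truncation level are free. A direct count of dimensions yields
\begin{equation}
\mu(\mathcal{L}_\mathbf{n}) = [D^\circ_{J(\mathbf{n})}]\cdot(\mathbb{L}-1)^{|J(\mathbf{n})|}\cdot\mathbb{L}^{-n-\sum_j n_j} \quad \text{in }\widehat{\mathcal{M}}.
\end{equation}
Substituting and grouping by $J=J(\mathbf{n})$ separates the sum into a product of geometric series,
\begin{equation}
Z(X,D;T) = \mathbb{L}^{-n}\sum_{J\subseteq\{1,\dots,l\}}[D^\circ_J]\prod_{j\in J}\sum_{n\geqslant 1}(\mathbb{L}-1)\mathbb{L}^{-n}T^{m_j n},
\end{equation}
and each inner series sums to $(\mathbb{L}-1)/(\mathbb{L}T^{-m_j}-1)$ in $\widehat{\mathcal{M}}[[T]]$.

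Finally, using the M\"obius inversion $[D^\circ_J] = \sum_{K\supseteq J}(-1)^{|K|-|J|}[D_K]$ together with the multiplicativity of the coefficient in $J$, the sum reorganizes as $\mathbb{L}^{-n}\sum_K[D_K]\prod_{j\in K}(g_j-1)$ with $g_j=(\mathbb{L}-1)/(\mathbb{L}T^{-m_j}-1)$; the one-line simplification $g_j-1 = \mathbb{L}(1-T^{-m_j})/(\mathbb{L}T^{-m_j}-1)$ then absorbs one factor of $\mathbb{L}$ per element of $K$ into the overall power, producing exactly the stated formula $\mathbb{L}^{|K|-n}\prod_{j\in K}(1-T^{-m_j})/(\mathbb{L}T^{-m_j}-1)$. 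The main obstacle is the local measure computation: correctly tracking the powers of $\mathbb{L}$ coming from truncation levels in the cylinder definition of $\mu$, and verifying that arcs with $\mathrm{ord}_{D_j}(\varphi)=\infty$ for some $j$ are genuinely negligible in $\widehat{\mathcal{M}}$. Once the measure formula is in hand, the remainder is a routine summation of a multi-geometric series followed by an elementary combinatorial reorganization.
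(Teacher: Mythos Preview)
The paper does not give its own proof of this formula; it simply cites it as \cite[Theorem 3.3.4]{MotIntBook}. Your argument is the standard one found in the cited references (Denef--Loeser, Craw): stratify the arc space by the vector of contact orders with the prime components, compute the motivic measure of each stratum in local SNC coordinates to obtain $\mu(\mathcal{L}_{\mathbf{n}})=[D^\circ_{J(\mathbf{n})}](\mathbb{L}-1)^{|J(\mathbf{n})|}\mathbb{L}^{-n-\sum_j n_j}$, sum the resulting multi-geometric series, and pass from open to closed strata by inclusion--exclusion. The computation is correct, including the key identity $g_j-1=\mathbb{L}(1-T^{-m_j})/(\mathbb{L}T^{-m_j}-1)$ that produces the final factor $\mathbb{L}^{|K|-n}$. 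So there is nothing to compare: you have supplied precisely the proof the paper chose to omit.
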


Let $d$ be a positive integer.
We define
\begin{equation}
F_{d}(X, D) := Z(X, D; \mathbb{L}^{-1/d})
= \sum_{m=0}^{\infty} \mu\big(\operatorname{ord}_{D}^{-1}(m)\big) \mathbb{L}^{-m/d}
\;\in \widehat{\mathcal{M}}[\mathbb{L}^{1/d}] \;.
\end{equation}
%which we call the $d$-th \emph{motivic integral} associated with $(X, D)$.
By \eqref{eq-MotIgusa}, we have
\begin{equation}
\label{eq-MotIntDef}
F_{d}(X, D) = \sum_{J\subseteq\{1,\dots,l\}} \mathbb{L}^{|J|-n}\left(\prod_{j\in J} \frac{1-\mathbb{L}^{m_j/d}}{\mathbb{L}^{1+m_j/d}-1}\right)[D_J]\;.
\end{equation}

An equivalent form of \eqref{eq-MotIntDef} when $d=1$ is in Craw \cite[Theorem 1.1]{Craw04}.

Now we state the formula of change of variables,
due to Kontsevich \cite{Kontsevich} and Denef--Loeser \cite[Theorem 1.16]{DenefLoeserCompositio02}),
in the following form taken from Craw \cite[Theorem 2.19]{Craw04} (when $d=1$).

\begin{thm}
\label{thm-MotInt}
Let $X$ be a projective complex manifold.
Let $f: X'\to X$ be the blow-up along a smooth center.
Let $K_{X'/X}$ be the relative canonical divisor.
Let $d$ be a positive integer.
Let $D$ be an effective divisor on $X$ such that
both $D$ and $f^*(D)+dK_{X'/X}$ are of simple normal crossing support.
We have
\begin{equation}
F_{d}(X, D)=F_{d}\big(X', f^*D +d K_{X'/X}\big) \;.
\end{equation}
\end{thm}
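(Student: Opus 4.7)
The plan is to deduce this statement from the classical change of variables formula for motivic integration (the case $d=1$ cited from Kontsevich \cite{Kontsevich} and Denef--Loeser \cite{DenefLoeserCompositio02}) by the substitution $T = \mathbb{L}^{-1/d}$ in the motivic Igusa zeta function. The classical theorem, in the general form that keeps track of a formal parameter $T$, states that for a proper birational morphism $f\colon X' \to X$ between smooth complex varieties, and for an effective divisor $D$ on $X$, one has the identity
\begin{equation*}
Z(X, D; T) = \int_{\mathcal{L}_\infty(X')} T^{\operatorname{ord}_{f^*D}} \mathbb{L}^{-\operatorname{ord}_{K_{X'/X}}} \d\mu
\end{equation*}
in $\widehat{\mathcal{M}}[[T]]$, where the integrand on the right carries the Jacobian factor coming from the relative canonical divisor.

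First, I would evaluate both sides of this identity at $T = \mathbb{L}^{-1/d}$. By definition $F_d(X, D) = Z(X, D; \mathbb{L}^{-1/d})$, so the left-hand side becomes $F_d(X, D)$. On the right-hand side, the substitution gives
\begin{equation*}
\int_{\mathcal{L}_\infty(X')} \mathbb{L}^{-\operatorname{ord}_{f^*D}/d}\, \mathbb{L}^{-\operatorname{ord}_{K_{X'/X}}} \d\mu
= \int_{\mathcal{L}_\infty(X')} \mathbb{L}^{-(\operatorname{ord}_{f^*D} + d\,\operatorname{ord}_{K_{X'/X}})/d}\, \d\mu.
\end{equation*}
Using that $\operatorname{ord}_{f^*D} + d\,\operatorname{ord}_{K_{X'/X}} = \operatorname{ord}_{f^*D + dK_{X'/X}}$ (order of vanishing is additive in the divisor), the right-hand side is precisely $F_d(X', f^*D + dK_{X'/X})$, since the hypothesis that $f^*D + dK_{X'/X}$ is of simple normal crossing support ensures that the explicit formula \eqref{eq-MotIntDef} is available on $X'$. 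This yields the desired equality in $\widehat{\mathcal{M}}[\mathbb{L}^{1/d}]$.

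The main obstacle that needs to be addressed carefully is making the substitution $T = \mathbb{L}^{-1/d}$ rigorous. A priori $Z(X, D; T)$ is a formal power series in $T$ over the completed ring $\widehat{\mathcal{M}}$, while $\mathbb{L}^{-1/d}$ is a fractional power lying outside $\widehat{\mathcal{M}}$, so one must justify that the substitution yields a well-defined element of an enlarged completed ring $\widehat{\mathcal{M}}[\mathbb{L}^{1/d}]$ (convergence with respect to the filtration $F^\bullet$). This follows by inspecting the explicit closed form \eqref{eq-MotIgusa}: each term contributes an element whose filtration degree goes to $+\infty$ after the substitution, so the resulting series converges.

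As a fallback, if one prefers to avoid the formal substitution issue, the theorem can also be proved by a direct combinatorial verification from \eqref{eq-MotIntDef}. One describes $f^*D + dK_{X'/X}$ explicitly in terms of strict transforms and the exceptional divisor $E$: if $Y$ is the center of codimension $r$ with $Y \subseteq D_j$ for $j=1,\dots,s$, then $f^*D + dK_{X'/X} = \sum_{j=1}^l m_j \widetilde{D}_j + m_E E$ with $m_E = m_1 + \cdots + m_s + (r-1)d$. Splitting the sum on the right-hand side of \eqref{eq-MotIntDef} according to whether $E$ is included, and using the scissor relations in $K_0(\Var_\C)$ to rewrite $[D_J]$ in terms of classes on $X'$ (the strata $[\widetilde{D}_J]$ and $[\widetilde{D}_J \cap E]$), one checks that the two sides match term by term. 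This combinatorial route is more concrete but requires heavier bookkeeping; I would favor the conceptual substitution argument above.
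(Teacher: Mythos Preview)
The paper does not give its own proof of this theorem: it is stated as a known result, attributed to Kontsevich and Denef--Loeser, with the explicit form taken from Craw (for $d=1$). There is thus nothing to compare against directly.

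Your derivation is exactly the natural way to pass from the cited $d=1$ statement to general $d$: take the change-of-variables identity with the formal parameter $T$ and specialize $T=\mathbb{L}^{-1/d}$, using additivity of $\operatorname{ord}$ to combine the exponents. The convergence check in $\widehat{\mathcal{M}}[\mathbb{L}^{1/d}]$ is the right thing to worry about, and the closed formula \eqref{eq-MotIgusa} (or equivalently the fact that $\mu(\operatorname{ord}_D^{-1}(m))\in F^{n+\lfloor m/m_{\max}\rfloor}\mathcal{M}$) disposes of it. Your fallback combinatorial route also works and is in the spirit of how the paper handles related identities elsewhere (e.g.\ Proposition~\ref{prop-Blowup}), but it is indeed heavier; the substitution argument is the preferable one.
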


\subsection{From motivic integration to BCOV invariant}
\label{subsec-Heuristic}
Let $X$ be a smooth projective complex variety.
Let $\gamma$ be a $d$-canonical form on $X$ such that
$D = \div(\gamma)$ satisfies the condition $(\star_{d})$ in Definition \ref{def-LocInv}.
Hence $(X,\gamma)$ is a $d$-Calabi--Yau pair in the sense of Definition \ref{def-cy-pair}.

Recall that the Hodge realization is the ring homomorphism
\begin{equation}
\chi_{\Hdg} \colon K_0(\Var_\C) \to K_0(\operatorname{HS})
\end{equation}
that sends the class of a smooth projective variety $X$ to the class of its cohomology $H^{\bullet}(X, \Z)$ endowed with Hodge structure.
It is easy to see that $\chi_{\Hdg}(\mathbb{L})=\Z(-1)$ is the Lefschetz Hodge structure, which we will denote by $\mathbf{L}$ in the sequel. Therefore,
for a Hodge structure $H^\bullet$ and $s\in\Z$,
$\mathbf{L}^{s}H^{\bullet}$ is the Tate twist $H^{\bullet}(-s)$, namely,
\begin{equation}
\mathbf{L}^sH^k_\Z = H^{k-2s}_\Z \;,\hspace{5mm}
\mathbf{L}^sH^{p,q}_\C = H^{p-s,q-s}_\C \;.
\end{equation}
For a polynomial $f(x)=a_0 + a_1x + \cdots + a_mx^m\in \Z[x]$,
we denote
\begin{equation}
f(\mathbf{L})H^\bullet = \sum_{s=0}^m \big(\mathbf{L}^sH^\bullet\big)^{\oplus a_s} \;.
\end{equation}
By \eqref{eq-MotIntDef},
we have
\begin{equation}
\label{eq-MotInt-Hdg}
\chi_{\Hdg}\left(F_{d}(X,D)\mathbb{L}^{\frac{n}{2}}\right) =
\sum_J \mathbf{L}^{|J|-\frac{n}{2}} \bigg( \prod_{j\in J} \frac{1-\mathbf{L}^{m_j/d}}{\mathbf{L}^{1+m_j/d}-1} \bigg) H^\bullet(D_J) \;.
\end{equation}
Mimicking \eqref{eq-def-eta-dR} and \eqref{eq-def-lambda-dr}, for a Hodge structure $H^\bullet$, we define
\begin{equation}
\eta(H^\bullet)=\bigotimes_k\left(\det H^k\right)^{(-1)^k} \;,\hspace{5mm}
\quad \lambda_\mathrm{dR}(H^\bullet)= \bigotimes_k \left(\det H^k\right)^{(-1)^kk} \;.
\end{equation}
We are interested in applying $\lambda_\mathrm{dR}$ to \eqref{eq-MotInt-Hdg}.
First we remark that
\begin{equation}
\eta\big(\mathbf{L} H^\bullet\big) = \eta(H^\bullet) \;,\hspace{5mm}
\lambda_\mathrm{dR}\big(\mathbf{L} H^\bullet\big) = \Big(\eta(H^\bullet)\Big)^{2} \otimes \lambda_\mathrm{dR}(H^\bullet)  \;.
\end{equation}
Therefore, for any polynomial $f$,
we have
\begin{equation}
\lambda_\mathrm{dR}\big(f(\mathbf{L})H^\bullet\big) =
\Big(\eta(H^\bullet)\Big)^{2f'(1)} \otimes \Big(\lambda_\mathrm{dR}(H^\bullet)\Big)^{f(1)} \;.
\end{equation}
Let $w_d^J$ be as in \eqref{eq-def-wJ}.
For
\begin{equation}
f(x) = x^{|J|-\frac{n}{2}} \prod_{j\in J} \frac{1-x^{m_j/d}}{x^{1+m_j/d}-1} \;,
\end{equation}
we have
\begin{equation}
\label{eq-lambdadR-f}
\lambda_\mathrm{dR}\big(f(\mathbf{L})H^\bullet\big) =
\Big(\eta(H^\bullet)\Big)^{(|J|-n)w^{J}_{d} } \otimes \Big(\lambda_\mathrm{dR}(H^\bullet)\Big)^{w^{J}_{d}} \;.
\end{equation}
By \eqref{eq-MotInt-Hdg} and \eqref{eq-lambdadR-f},
we have
\begin{align}
\label{eq-MotInt-lambda}
\begin{split}
& \lambda_\mathrm{dR}\bigg(\chi_{\Hdg}\left(F_{d}(X, D)\mathbb{L}^{\frac{n}{2}}\right)\bigg)\\
& = \bigotimes_J \bigg( \Big(\lambda_\mathrm{dR}\big(H^\bullet(D_J)\big)\Big)^{w_d^{J}}
\otimes \Big(\eta\big(H^\bullet(D_J)\big)\Big)^{(|J|-n)w_d^{J}} \bigg) \;.
\end{split}
\end{align}
Observe that
the BCOV invariant $\tau_{d}(X, D)$ (cf.~Definition \ref{def-BCOV-Pair}) is essentially the Quillen metric on
$\bigotimes_J  \Big(\lambda_\mathrm{dR}\big(H^\bullet(D_J)\big)\Big)^{w_d^{J}}$.
On the other hand,
by Definition \ref{def-tau-top},
the Quillen metric on $\eta\big(H^\bullet(D_J)\big)$ gives rise to $\tau_\mathrm{top}(D_J)$,
which vanishes by \eqref{eq-prop-rei}.
Therefore,
our BCOV invariant is essentially the Quillen metric on the determinant line \eqref{eq-MotInt-lambda}.

\section{Birational BCOV invariants}
\label{ch-birat}

%\subsection{Birational BCOV invariants}

\begin{defn}
\label{def-tau-b}
For any $d$-Calabi--Yau pair $(X, \gamma)$,  its \emph{birational BCOV invariant} is
\begin{align}
\label{eq-def-tau-b}
\begin{split}
\taub\big(X,\gamma\big) & = \tau_d\big(X,\gamma\big)
- \frac{1}{2} \tau(\CP^1) \chi'_{d}\big(X,\gamma\big) \\
& \hspace{5mm} + \Big(-\frac{1}{2}\tau(\CP^2) + \frac{3}{4} \tau(\CP^1) \Big) \chi''_d\big(X,\gamma\big) \;,
\end{split}
\end{align}
where $\chi_d'(X, \gamma)$ and $\chi_d''(X, \gamma)$ are as in Definition \ref{def-LocInv},
applied to the invariants $\chi'$ and $\chi''$ in Example \ref{ex-localizable}.
\end{defn}

For a Calabi--Yau manifold $X$ and a $d$-canonical form $\gamma$ on $X$
such that $\int_X \big|\gamma\overline{\gamma}\big|^{1/d} = (2\pi)^{\dim X}$,
we have
\begin{equation}
\label{eq4-proof-thm-birat}
\tau_d^{\mathrm{bir}}(X, \gamma) =
\tau(X) - \frac{1}{2} \tau(\CP^1) \chi'(X)
+ \Big( - \frac{1}{2}\tau(\CP^2) + \frac{3}{4} \tau(\CP^1) \Big) \chi''(X) \;.
\end{equation}

\begin{lemme}
\label{lem-proj-bd-b}
Let $(X,\gamma_X)$, $(Y,\gamma_Y)$ and $(Z,\gamma_Z)$
be as in Theorem \ref{thm-proj-bd} (see \eqref{eqn:X=CompletedProjBun}-\eqref{eqn:GammaX} in Section \ref{subsec:ProjBundle}).
Then
\begin{align}
\begin{split}
\taub\big(X,\gamma_X\big) = \chi_{d}\big(Y,\gamma_Y\big) \taub\big(Z,\gamma_Z\big) \;.
\end{split}
\end{align}
\end{lemme}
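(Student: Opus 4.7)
The plan is to reduce the statement to the $\tau_d$-analogue already established (Theorem \ref{thm-proj-bd}) together with the multiplicative formula for log-type localizable invariants (Proposition \ref{prop-ProjBun}). I would start by expanding $\taub(X,\gamma_X)$ using Definition \ref{def-tau-b}:
\begin{align*}
\taub(X,\gamma_X) &= \tau_d(X,\gamma_X) - \tfrac{1}{2}\tau(\CP^1)\,\chi'_d(X,\gamma_X) \\
&\quad + \Bigl(-\tfrac{1}{2}\tau(\CP^2)+\tfrac{3}{4}\tau(\CP^1)\Bigr) \chi''_d(X,\gamma_X),
\end{align*}
and the analogous expansion for $\taub(Z,\gamma_Z)$. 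The target identity will then follow term by term, provided we can show
\[
\tau_d(X,\gamma_X) = \chi_d(Y,\gamma_Y)\,\tau_d(Z,\gamma_Z), \quad \phi_d(X,\gamma_X) = \chi_d(Y,\gamma_Y)\,\phi_d(Z,\gamma_Z)
\]
for $\phi \in \{\chi',\chi''\}$.

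The first identity is exactly Theorem \ref{thm-proj-bd}. For the remaining two identities, I would invoke Proposition \ref{prop-ProjBun}, which applies precisely to log-type localizable invariants on $d$-canonical fibrations of the form $(X,D) \to (Y,D_Y)$ with fiber $(Z,D_Z)$. The main verification is that the setup of Theorem \ref{thm-proj-bd} fits the fibration framework of \S\ref{ch-log}: taking $V=N\oplus\mathds{1}$ (rank $r+1$) and the sub-bundles $N_j\oplus\mathds{1}$ and $N$ (each of rank $r$) inside $V$, the divisor $\div(\gamma_X) = \pi^*\div(\gamma_Y) - (m+rd+d)X_\infty + \sum_{j=1}^{s} m_j X_j$ is of the form required before Lemma \ref{lem-ProjBun}. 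Moreover, since $\gamma_X$ is a meromorphic section of $K_X^d$, the divisor $\div(\gamma_X)$ is automatically $d$-canonical, so Proposition \ref{prop-ProjBun} applies. Since $\chi'$ and $\chi''$ are log-type localizable by Example \ref{ex-localizable}, we get the multiplicative formulas above for them.

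Finally I would substitute the three identities back into the expansion of $\taub(X,\gamma_X)$; the common factor $\chi_d(Y,\gamma_Y)$ factors out of each term, and the remaining bracket collapses precisely to the definition of $\taub(Z,\gamma_Z)$, yielding the claimed identity. There is no real obstacle in this argument beyond checking that the hypothesis of the $d$-canonicity and the fibration structure are correctly translated from the projective bundle setting of Theorem \ref{thm-proj-bd} to the abstract setting of Proposition \ref{prop-ProjBun}; once that matching is verified, the proof is purely algebraic bookkeeping.
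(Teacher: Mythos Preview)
Your proposal is correct and follows essentially the same approach as the paper: apply Theorem \ref{thm-proj-bd} for the $\tau_d$ term, invoke Proposition \ref{prop-ProjBun} (using that $\chi'$ and $\chi''$ are log-type localizable by Example \ref{ex-localizable}) for the $\chi'_d$ and $\chi''_d$ terms, and then combine via Definition \ref{def-tau-b}. The paper's proof is simply a terser statement of this same argument.
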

\begin{proof}
Since $\chi'$ and $\chi''$ are log-type localizable invariants (Examples \ref{ex-localizable}),
Proposition \ref{prop-ProjBun} yields $\chi_d'(X,\gamma_X) = \chi_d(Y,\gamma_Y) \chi_d'(Z,\gamma_Z)$
and $\chi_d''(X,\gamma_X) = \chi_d(Y,\gamma_Y) \chi_d''(Z, \gamma_Z)$.
Combining them with Theorem \ref{thm-proj-bd} allows us to conclude.
\end{proof}

\begin{lemme}
\label{lem-bl-b}
Let $(X,\gamma_X)$, $(Y,D_Y)$, $f:X'\rightarrow X$
and $m_0,\cdots,m_s\in\Z$,
be as in Theorem \ref{thm-bl}.
We have
\begin{align}
\begin{split}
& \taub\big(X',f^*\gamma_X\big) - \taub\big(X,\gamma_X\big) \\
& = \chi_d\big(Y,D_Y\big)
\Big( \chi_d\big(\CP^{r-1},D_{m_1,\cdots,m_s}\big) \taub\big(\CP^1,\gamma_{m_0}\big)
- \taub\big(\CP^r,\gamma_{m_1,\cdots,m_s}\big) \Big) \;.
\end{split}
\end{align}
\end{lemme}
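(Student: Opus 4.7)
The plan is to reduce the statement to the known blow-up formulas for each of the three summands comprising $\taub$ in Definition~\ref{def-tau-b}, and then recombine. Concretely, I would write
\[
\taub(X,\gamma) \;=\; \tau_d(X,\gamma) \;+\; \alpha\,\chi'_d(X,\gamma) \;+\; \beta\,\chi''_d(X,\gamma),
\]
with $\alpha=-\tfrac12\tau(\CP^1)$ and $\beta=-\tfrac12\tau(\CP^2)+\tfrac34\tau(\CP^1)$, and handle each term separately. Since the assertion is linear in $\phi\mapsto\phi_d(\,\cdot\,)$, it suffices to show that for each of $\phi=\tau_d$, $\phi=\chi'_d$ and $\phi=\chi''_d$ the blow-up difference $\phi(X',f^*\gamma_X)-\phi(X,\gamma_X)$ equals $\chi_d(Y,D_Y)$ times $\bigl(\chi_d(\CP^{r-1},D_{m_1,\cdots,m_s})\phi(\CP^1,\gamma_{m_0}) - \phi(\CP^r,\gamma_{m_1,\cdots,m_s})\bigr)$.

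For the $\tau_d$ term I would invoke Theorem~\ref{thm-bl} together with the reformulation given in Remark~\ref{rem-bl}: the identity $\chi_d(E,D_E)=\chi_d(Y,D_Y)\chi_d(\CP^{r-1},D_{m_1,\cdots,m_s})$ allows one to factor $\chi_d(Y,D_Y)$ out and rewrite the right-hand side in the desired bracketed form. For the $\chi'_d$ and $\chi''_d$ terms I would apply Proposition~\ref{prop-Blowup}, which applies because $\chi'$ and $\chi''$ are log-type localizable invariants (Examples~\ref{ex-localizable}) and because $D=\div(\gamma_X)$ is a $d$-canonical divisor satisfying $(\star_d)$ by Definition~\ref{def-cy-pair}. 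Proposition~\ref{prop-Blowup} produces exactly the same algebraic shape, but with the $\CP^1$- and $\CP^r$-terms expressed via the reference divisors $D_{d;m_e}$ and $D_{d;m_1,\cdots,m_s}$ rather than via the forms $\gamma_{m_0}$ and $\gamma_{m_1,\cdots,m_s}$.

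To bridge this last discrepancy I would make the elementary observation that $\phi_d(X,\gamma)$ depends only on the divisor $\div(\gamma)$, and then verify the coincidences of divisors: on $\CP^1$, the form $\gamma_{m_0}=z^{m_0}(\d z)^d$ has divisor $m_0\{0\}-(m_0+2d)\{\infty\}$, while $D_{d;m_e}=m_e H_1-(m_e+2d)H_0$; since $m_0=m_1+\cdots+m_s+(r-1)d=m_e$ these agree after relabelling $\{0\}\leftrightarrow H_1$, $\{\infty\}\leftrightarrow H_0$. The corresponding check on $\CP^r$ is analogous using that $K_{\CP^r}^d$ has degree $-(r+1)d$, yielding $\div(\gamma_{r,m_1,\cdots,m_s})=D_{d;m_1,\cdots,m_s}$. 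Consequently $\phi_d(\CP^1,\gamma_{m_0})=\phi_d(\CP^1,D_{d;m_e})$ and $\phi_d(\CP^r,\gamma_{m_1,\cdots,m_s})=\phi_d(\CP^r,D_{d;m_1,\cdots,m_s})$ for each $\phi\in\{\chi',\chi''\}$.

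Having the three parallel identities in hand, summing them against the coefficients $1,\alpha,\beta$ reassembles $\taub$ on the left and on $\CP^1$, $\CP^r$ on the right, giving the claimed formula. The argument contains no genuine obstacle: the substantive analytic content is already packaged into Theorem~\ref{thm-bl} and Proposition~\ref{prop-Blowup}, and the only mild care required is the bookkeeping of conventions to match $D_{d;m_e}$ with $\div(\gamma_{m_0})$ (and the corresponding $\CP^r$ identification), together with verifying that the normal-crossing and non-exceptional-multiplicity hypotheses of Proposition~\ref{prop-Blowup} are inherited from those imposed in Section~4.4 on $(X,\gamma_X)$ and the blow-up center $Y$.
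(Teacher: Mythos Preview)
Your proposal is correct and follows essentially the same approach as the paper: the paper's proof is a one-liner referencing Theorem~\ref{thm-bl}, Remark~\ref{rem-bl}, and Proposition~\ref{prop-Blowup}, exactly as you outline. You have simply made explicit the bookkeeping (the identification $m_0=m_e$ and $\div(\gamma_{r,m_1,\cdots,m_s})=D_{d;m_1,\cdots,m_s}$) that the paper leaves implicit.
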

\begin{proof}
Similarly to the proof of Lemma \ref{lem-proj-bd-b},
we use Theorem \ref{thm-bl}, Remark \ref{rem-bl} and Proposition \ref{prop-Blowup}.
\end{proof}

\begin{thm}
\label{thm-tau-b}
For any $m_1,\cdots,m_n\in\N$,
we have
\begin{equation}
\label{eq-thm-blb}
\taub\big(\CP^n,\gamma_{m_1,\cdots,m_n}\big) = 0 \;.
\end{equation}
\end{thm}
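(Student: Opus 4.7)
The plan is to proceed by induction on $n$, with the base cases $n=1$ and $n=2$ handled by direct computation and the inductive step $n\geq 3$ reduced via the blow-up and projective bundle formulas.

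For $n=1$, Theorem \ref{thm-tau-dim1} gives $\tau_d(\CP^1,\gamma_m)=\tau(\CP^1)$. From Example \ref{ex-localizable}, $\chi'(\mathrm{pt})=0$ and $\chi''(\CP^1)=\chi''(\mathrm{pt})=0$, so Definition \ref{def-LocInv} yields $\chi'_d(\CP^1,\gamma_m)=\chi'(\CP^1)=2$ and $\chi''_d(\CP^1,\gamma_m)=0$. Substituting into \eqref{eq-def-tau-b} gives $\taub(\CP^1,\gamma_m)=\tau(\CP^1)-\tfrac{1}{2}\tau(\CP^1)\cdot 2=0$.

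For $n=2$, analogous computations give $\chi''_d(\CP^2,\gamma_{m_1,m_2})=\chi''(\CP^2)=2$ and $\chi'_d(\CP^2,\gamma_{m_1,m_2})=6-2(a+b+c)$, where $a=m_1/(m_1+d)$, $b=m_2/(m_2+d)$, $c=(m_1+m_2+3d)/(m_1+m_2+2d)$. Substituting these together with the explicit formula from Theorem \ref{thm-tau-dim2} into \eqref{eq-def-tau-b}, the coefficients of $\tau(\CP^2)$ and $\tau(\CP^1)$ both cancel, giving $\taub(\CP^2,\gamma_{m_1,m_2})=0$.

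For the inductive step $n\geq 3$, assume the result for all $k<n$, and choose a codimension-$2$ linear subspace $W\subset\CP^n$ adapted to the multiplicities: if at least two of the $m_j$'s are positive, take $W=H_{j_1}\cap H_{j_2}$ for two such indices; if exactly one $m_{j_1}$ is positive, take $W$ generic in $H_{j_1}$; if all $m_j=0$, take $W$ generic in $\CP^n$. Let $f\colon\Bl_W\CP^n\to\CP^n$ be the blow-up. Applying Lemma \ref{lem-bl-b} with $r=2$, the right-hand side is a multiple of $\chi_d(\CP^1,D_{\cdots})\taub(\CP^1,\gamma_{m_0})-\taub(\CP^2,\gamma_{\cdots})$, and both $\taub$-terms vanish by the base cases. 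Hence $\taub(\Bl_W\CP^n,f^*\gamma)=\taub(\CP^n,\gamma_{m_1,\cdots,m_n})$. On the other hand, the linear projection $\pi\colon\Bl_W\CP^n\to\CP^1$ away from $W$ exhibits the blow-up as a toric $\CP^{n-1}$-bundle $\mathbb{P}(N\oplus\mathds{1})$ over $\CP^1$. A local-coordinate computation (case by case) shows that $\div(f^*\gamma)$ has the decomposition required by Theorem \ref{thm-proj-bd}: the components $\tilde H_{j_k}$ with $j_k\in\{j_1,j_2\}$ are $\pi$-fibers (contributing to $\pi^*\div(\gamma_Y)$), while the exceptional divisor $E$, the remaining $\tilde H_j$'s, and $\tilde H_\infty$ are sections cut out by codimension-$1$ sub-bundles of $N\oplus\mathds{1}$. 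The restriction to a generic $\pi$-fiber is identified with $\gamma_{m'_1,\cdots,m'_{n-1}}$ on $\CP^{n-1}$, where in Case 1 the $m'_i\in\N$ are a rearrangement of $\{m_j : j\notin\{j_1,j_2\}\}$ together with $m_{j_1}+m_{j_2}+d$, and similarly $m_{j_1}+d$ (Case 2) or $d$ (Case 3) replaces that entry. By the inductive hypothesis $\taub(\CP^{n-1},\gamma_{m'_1,\cdots,m'_{n-1}})=0$, so Lemma \ref{lem-proj-bd-b} yields $\taub(\Bl_W\CP^n,f^*\gamma)=0$, and combined with the preceding equality we conclude $\taub(\CP^n,\gamma_{m_1,\cdots,m_n})=0$.

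The principal technical step is the verification, in each of the three cases, that the section-type components of $\div(f^*\gamma)$ correspond to codimension-$1$ sub-bundles of $N\oplus\mathds{1}$, as required to invoke Theorem \ref{thm-proj-bd}; this is a toric combinatorics check that relies crucially on the linearity of $W$ and must be carried out separately in each of the three subcases of the inductive step.
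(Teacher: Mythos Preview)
Your proof is correct and uses the same two key lemmas (Lemma~\ref{lem-bl-b} and Lemma~\ref{lem-proj-bd-b}) as the paper, but with a dual geometry in the inductive step. The paper blows up the line $\CP^1=\{\xi_2=\cdots=\xi_{r+1}=0\}\hookrightarrow\CP^{r+1}$ (codimension $r$) and realizes $\Bl_{\CP^1}\CP^{r+1}$ as a $\CP^2$-bundle over $\CP^{r-1}$; the blow-up formula then involves $\taub(\CP^1,\cdot)$ and $\taub(\CP^r,\cdot)$, while the projective-bundle formula has fiber $\CP^2$. You instead blow up a codimension-$2$ linear subspace and realize the result as a $\CP^{n-1}$-bundle over $\CP^1$; your blow-up formula involves $\taub(\CP^1,\cdot)$ and $\taub(\CP^2,\cdot)$, while your projective-bundle formula has fiber $\CP^{n-1}$. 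Both feed into the same induction.

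The paper's choice is cleaner in one respect: the line it blows up is automatically contained only in coordinate hyperplanes $H_j$ with $m_j\geqslant 0$, so the positivity hypothesis in Theorem~\ref{thm-bl} is satisfied uniformly and no case distinction is needed. Your three-case split is in fact unnecessary: you may always take $W=H_1\cap H_2$, since if $m_j=0$ then $H_j$ is simply not a component of $\div(\gamma)$ and the hypothesis ``$m_j>0$ whenever $Y\subseteq D_j$'' is vacuous for that index. This single choice of $W$ also makes the verification of the hypotheses of Theorem~\ref{thm-proj-bd} (your ``toric combinatorics check'') uniform and concrete, avoiding the vaguer ``generic $W$'' of your Cases~2 and~3, where one must still argue that the horizontal components of $\div(f^*\gamma)$ are projectivizations of sub-bundles of $N\oplus\mathds{1}$ in the precise sense that theorem requires.
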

\begin{proof}
We have (see Example \ref{ex-localizable})
\begin{align}
\label{eq1-pf-thm-tau-b}
\begin{split}
\chi(\CP^1) = 2 \;,\hspace{5mm}
\chi'(\CP^1) = 2 \;,\hspace{5mm}
\chi''(\CP^1) = 0 \;,\\
\chi(\CP^2) = 3 \;,\hspace{5mm}
\chi'(\CP^2) = 6 \;,\hspace{5mm}
\chi''(\CP^2) = 2 \;.
\end{split}
\end{align}
By Definition \ref{def-LocInv} and \eqref{eq1-pf-thm-tau-b},
for any $m_{1}, m_{2}\in \N$,
we have
\begin{align}
\label{eq2-pf-thm-tau-b}
\begin{split}
& \chi_d''\big(\CP^1,\gamma_{m_1}\big) = 0 \;,\hspace{5mm}
\chi_d''\big(\CP^2,\gamma_{m_1,m_2}\big) = 2 \;,\hspace{5mm}
\chi_d'\big(\CP^1,\gamma_{m_1}\big) = 2 \;,\\
& \chi_d'\big(\CP^2,\gamma_{m_1,m_2}\big) = 6 -  2 \Big( \frac{m_1}{m_1+d} + \frac{m_2}{m_2+d} + \frac{m_1+m_2+3d}{m_1+m_2+2d} \Big) \;.
\end{split}
\end{align}
By Theorem \ref{thm-tau-dim1}, Theorem \ref{thm-tau-dim2}, Definition \ref{def-tau-b} and \eqref{eq2-pf-thm-tau-b},
we have  $\taub\big(\CP^1,\gamma_{m_1}\big) = \taub\big(\CP^2,\gamma_{m_1,m_2}\big) = 0$.
Hence \eqref{eq-thm-blb} holds for $n \leqslant 2$.
We proceed by induction. Let $r\geqslant 2$ be an integer.
Assume that
\begin{equation}
\label{eq4-pf-thm-tau-b}
\taub\big(\CP^n,\gamma_{m_1,\cdots,m_n}\big) = 0 \hspace{2.5mm} \text{for }n \leqslant r \;.
\end{equation}
Let $i:\CP^1\hookrightarrow\CP^{r+1}$ be the extension of
$\C \ni z \mapsto (z,0,\cdots,0) \in \C^{r+1}$.
Let $f: \Bl_{\CP^1}\CP^{r+1}\rightarrow\CP^{r+1}$
%\begin{equation}
%\label{eq6-pf-thm-tau-b}
%f: \Bl_{\CP^1}\CP^{r+1}\rightarrow\CP^{r+1}
%\end{equation}
be the blow-up along $i(\CP^1)$.
Then $\Bl_{\CP^1}\CP^{r+1}$ is a $\CP^2$-bundle over $\CP^{r-1}$.
By Lemma \ref{lem-proj-bd-b} and \eqref{eq4-pf-thm-tau-b},
we have
\begin{equation}
\label{eq7-pf-thm-tau-b}
\taub\big(\Bl_{\CP^1}\CP^{r+1},f^*\gamma_{m_1,\cdots,m_{r+1}}\big) = 0 \;.
\end{equation}
By Lemma \ref{lem-bl-b} and \eqref{eq4-pf-thm-tau-b},
we have
\begin{equation}
\label{eq8-pf-thm-tau-b}
\taub\big(\Bl_{\CP^1}\CP^{r+1},f^*\gamma_{m_1,\cdots,m_{r+1}}\big) -
\taub\big(\CP^{r+1},\gamma_{m_1,\cdots,m_{r+1}}\big) = 0 \;.
\end{equation}
From \eqref{eq7-pf-thm-tau-b} and \eqref{eq8-pf-thm-tau-b},
we obtain
\begin{equation}
\taub\big(\CP^n,\gamma_{m_1,\cdots,m_n}\big) = 0 \hspace{2.5mm} \text{for }n \leqslant r+1 \;.
\end{equation}
This completes the proof by induction.
\end{proof}

\begin{thm}
\label{thm-bl-b}
Let $(X,\gamma_X)$ and $f:X'\rightarrow X$
be as in Theorem \ref{thm-bl}. Then
\begin{equation}
\taub\big(X',f^*\gamma_X\big) = \taub\big(X,\gamma_X\big) \;.
\end{equation}
\end{thm}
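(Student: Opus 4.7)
The plan is to deduce this blow-up invariance directly from the two ingredients established immediately above: the blow-up formula for $\taub$ (Lemma \ref{lem-bl-b}) and the vanishing of $\taub$ on projective spaces (Theorem \ref{thm-tau-b}). The entire design of $\taub$ in Definition \ref{def-tau-b} was calibrated so that this deduction becomes automatic.

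The first step is to invoke Lemma \ref{lem-bl-b} in the setting of Theorem \ref{thm-bl}. It already expresses the difference
\[
\taub\big(X',f^*\gamma_X\big) - \taub\big(X,\gamma_X\big)
\]
as $\chi_d(Y,D_Y)$ multiplied by an integer linear combination of $\taub(\CP^1,\gamma_{m_0})$ and $\taub(\CP^r,\gamma_{m_1,\cdots,m_s})$. The second step is to apply Theorem \ref{thm-tau-b} to both terms, identifying $\gamma_{m_1,\cdots,m_s}$ on $\CP^r$ with $\gamma_{m_1,\cdots,m_s,0,\cdots,0}$ (consistent with the convention used in \eqref{eq-gammaZ}, where the unspecified exponents are zero). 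Since Theorem \ref{thm-tau-b} gives $\taub(\CP^n,\gamma_{m_1,\cdots,m_n}) = 0$ for every $n$ and every choice of nonnegative integers $m_i$, both terms on the right-hand side vanish, and we conclude $\taub(X',f^*\gamma_X) = \taub(X,\gamma_X)$.

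I do not expect any genuine obstacle at this step; the real work has already been done. The content that mattered is the choice of the coefficients $-\tfrac{1}{2}\tau(\CP^1)$ and $-\tfrac{1}{2}\tau(\CP^2) + \tfrac{3}{4}\tau(\CP^1)$ in Definition \ref{def-tau-b}, which were precisely calibrated so that the explicit formulas for $\tau_d(\CP^1,\gamma_m)$ and $\tau_d(\CP^2,\gamma_{m_1,m_2})$ from Theorems \ref{thm-tau-dim1} and \ref{thm-tau-dim2} cancel against the log-type correction terms $\chi_d'$ and $\chi_d''$. Once this calibration is in place, Theorem \ref{thm-tau-b} propagates the vanishing to every $\CP^n$ by induction on dimension using Zhang's blow-up formula (Theorem \ref{thm-bl}) together with the projective bundle formula, and the present theorem is merely the final assembly: plug the vanishing into Lemma \ref{lem-bl-b}.
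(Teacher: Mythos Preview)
Your proposal is correct and matches the paper's own proof, which simply states that the result is a direct consequence of Lemma \ref{lem-bl-b} and Theorem \ref{thm-tau-b}. One tiny slip: the coefficient $\chi_d(\CP^{r-1},D_{m_1,\cdots,m_s})$ appearing in Lemma \ref{lem-bl-b} is rational rather than integral in general, but this is immaterial since both $\taub$ terms it multiplies vanish by Theorem \ref{thm-tau-b}.
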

\begin{proof}
This is a direct consequence of Lemma \ref{lem-bl-b} and Theorem \ref{thm-tau-b}.
\end{proof}

\section{Extension to the singular cases}
\label{ch-singular}
We extend the theory of BCOV invariants to Calabi--Yau varieties with mild singularities.
In this section, $X$ is a normal projective complex variety of dimension $n$.

\subsection{Definitions and basic properties}

Recall that a variety $X$ is called $\Q$-\emph{Gorenstein} if the canonical divisor $K_{X}$ is $\Q$-Cartier,
i.e., there exists a positive integer $d$ such that $dK_{X}$ is a Cartier divisor.
The minimal value of such $d$ is called the \emph{index} of $X$.

\begin{defn}[Canonical and KLT singularities {\cite[Definition 2.34]{KollarMori}}]
\label{def-KLT}
Let $X$ be a $\Q$-Gorenstein normal variety
and let $f\colon X'\to X$ be a log-resolution,
i.e., $f$ is a resolution of singularities and the support of the exceptional divisor $E = \bigcup_{j=1}^l E_j$ is simple normal crossing.
Write the equality of $\Q$-divisors
\begin{equation}
K_{X'/X} = \sum_{j=1}^l a_jE_j\;,
\end{equation}
where $K_{X'/X}$ is the relative canonical divisor $K_{X'}-\frac{1}{d}f^*{(dK_X)}$
for any positive integer $d$ divisible by the index of $X$,
where $K_{X'}$ is such that $f_*K_{X'}=K_X$.
We say that $X$ has \emph{canonical} singularities if $a_j\geqslant 0$ for all $j$. Similarly, $X$ is said to have \emph{Kawamata log terminal} (KLT) singularities if $a_j>-1$ for all $j$.
The coefficients $a_j\in\Q$ are called \emph{discrepancy numbers}.
The definitions are independent of the log-resolution $f$.
\end{defn}

\begin{defn}
\label{def-KLTCY}
A \emph{canonical (resp.~KLT) Calabi--Yau} variety is a $\Q$-Gorenstein normal projective complex variety $X$ with canonical (resp.~KLT) singularities such that $K_{X}\sim_{\Q}0$, where $\sim_{\Q}$ is the linear equivalence relation for $\Q$-divisors.
\end{defn}

Let us record the following basic result.
\begin{prop}[Integrability of volume form {\cite[Thm.~2.1]{Sakai77}, \cite[Prop.~1.17]{MR1983959}}]
\label{prop-Integrability}
Let $X$ be an $n$-dimensional variety with KLT singularities.
Let $d$ be a positive integer divisible by the index of $X$.
Then for any $\gamma\in H^0(X, \mathcal{O}_{X}(dK_{X}))$,
the integral
\begin{equation}
\int_{X_{\operatorname{reg}}} \big|\gamma\overline{\gamma}\big|^{1/d}
\end{equation}
converges,
where $X_{\operatorname{reg}}$ is the regular part of $X$
and $\big|\gamma\overline{\gamma}\big|^{1/d}$ is the unique positive volume form on $X$
whose $d$-th power equals to $i^{n^2}\gamma\wedge\overline{\gamma}$.
\end{prop}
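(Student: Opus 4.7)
The plan is to transport the integral to a smooth model via a log resolution and then check local integrability using the KLT hypothesis. Let $d$ be divisible by the index of $X$, so that $dK_X$ is Cartier. Choose a log resolution $f\colon X'\to X$ as in Definition \ref{def-KLT}, refined if necessary so that the union of the exceptional divisor $E=\bigcup_j E_j$ and the strict transform of $\div(\gamma)$ has simple normal crossing support. Since $f$ restricts to a biholomorphism away from the exceptional locus, which has real codimension $\geqslant 2$ in $X'$, one has
\begin{equation*}
\int_{X_{\operatorname{reg}}}\big|\gamma\overline{\gamma}\big|^{1/d}=\int_{X'\setminus E}\big|f^*\gamma\overline{f^*\gamma}\big|^{1/d},
\end{equation*}
and the task reduces to proving that the right-hand side is finite.

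Next I would analyze $|f^*\gamma\overline{f^*\gamma}|^{1/d}$ locally on $X'$. Since $dK_X$ is Cartier, we may write $\gamma=g\cdot\omega_X^{\otimes d}$ locally for some holomorphic function $g$ on $X$ and some local generator $\omega_X^{\otimes d}$ of $\mathcal{O}_X(dK_X)$. The pullback $f^*(\omega_X^{\otimes d})$ is a rational $d$-canonical form on $X'$ whose divisor equals $dK_{X'/X}=\sum_j d a_jE_j$ by the very definition of the discrepancies. Choosing local coordinates $(z_1,\ldots,z_n)$ on $X'$ adapted to the SNC configuration, we therefore obtain
\begin{equation*}
f^*\gamma=u\cdot\prod_{i=1}^{r}z_i^{d a_i+b_i}\,(\d z_1\wedge\cdots\wedge \d z_n)^d,
\end{equation*}
where $u$ is a holomorphic unit, $a_i$ is the discrepancy along $\{z_i=0\}$ when this component is exceptional (and $a_i=0$ otherwise), and $b_i\in\N$ is the multiplicity of $\div(f^*g)$ along $\{z_i=0\}$. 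Taking the $1/d$-th power of $i^{n^2}f^*\gamma\wedge\overline{f^*\gamma}$ then yields
\begin{equation*}
\big|f^*\gamma\overline{f^*\gamma}\big|^{1/d}=|u|^{2/d}\prod_{i=1}^{r}|z_i|^{2(a_i+b_i/d)}\cdot\beta
\end{equation*}
for a smooth positive volume form $\beta$.

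Convergence then follows from the classical criterion that $|z|^{2\alpha}$ is $L^1_{\operatorname{loc}}$ on $\C$ if and only if $\alpha>-1$. The KLT condition gives $a_i>-1$ whenever $\{z_i=0\}$ is exceptional and $a_i=0$ otherwise, while $b_i\geqslant 0$ since $g$ is holomorphic; thus $a_i+b_i/d>-1$ at every point of $X'$. The integrand is therefore locally integrable on the compact manifold $X'$, and hence the integral is finite.

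The step demanding the most care is the bookkeeping in the local description of $f^*\gamma$: one must carefully distinguish between $\gamma$ regarded as a section of $\mathcal{O}_X(dK_X)$ (for which $dK_X$ being Cartier is essential, hence the hypothesis that $d$ is divisible by the index) and $f^*\gamma$ viewed as a $d$-canonical form on $X'$, the two interpretations differing precisely by the divisor $dK_{X'/X}=\sum_j d a_jE_j$. Once this identification is in place, the remaining analytic estimate is elementary.
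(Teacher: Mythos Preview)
The paper does not supply its own proof of this proposition; it merely records the statement and cites Sakai and Koll\'ar for the argument. Your proof is correct and is exactly the standard argument behind those references: pull back to a log resolution, use that the divisor of $f^*(\omega_X^{\otimes d})$ is $\sum_j d a_j E_j$ (an integral divisor since $d$ is divisible by the index), and reduce to the elementary fact that $|z|^{2\alpha}$ is locally integrable on $\C$ precisely when $\alpha>-1$, which is guaranteed by the KLT condition $a_j>-1$ together with $b_i\geqslant 0$.
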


We extend the birational BCOV invariant studied in \textsection \ref{ch-birat} to varieties with KLT singularities,
equipped with a pluricanonical form or a pluricanonical \emph{effective} divisor.
\begin{defn}
\label{def-taub-KLT}
Let $X$ be an $n$-dimensional variety with KLT singularities.
Let $d\in \N_{>0}$ divisible by the index of $X$.
Let  $D\in |dK_{X}|$.
Let $f\colon X'\to X$ be a resolution of singularities such that
$\widetilde{D}\cup E$ is of simple normal crossing support,
where $\widetilde{D}$ is the strict transform of $D$ and $E$ is the exceptional divisor.
For any $\gamma\in H^0(X,\mathcal{O}_X(dK_X))$ such that $D=\operatorname{div}(\gamma)$,
we define
\begin{equation}
\label{eq1-def-taub-KLT}
\taub\big(X,\gamma\big) := \taub\big(X',f^*\gamma\big) \;.
\end{equation}
Here the right hand side is defined in Definition \ref{def-tau-b}.
Note that $X$ having KLT singularities implies that $(X', f^*\gamma)$ is indeed a $d$-Calabi--Yau pair (i.e. Condition ($\star_{d}$) is verified).
We also define
\begin{equation}
\label{eq2-def-taub-KLT}
\taub\big(X,D\big) := \taub\big(X,\gamma\big)
+ \frac{\chi_d(X,D)}{12} \log \left(\big(2\pi\big)^{-n}\int_{X_{\operatorname{reg}}}\big|\gamma\overline{\gamma}\big|^{1/d}\right) \;.
\end{equation}
Note that the integral on the right hand side converges by Proposition \ref{prop-Integrability}.
\end{defn}

In the next two lemmas,
we show that $\taub\big(X,\gamma\big)$ and $\taub\big(X,D\big)$ are well-defined.

\begin{lemme} The quantity
$\taub\big(X, \gamma\big)$ is independent of the resolution $f$.
\end{lemme}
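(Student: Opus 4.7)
The plan is to combine Hironaka's theorem with weak factorization and the blow-up invariance of $\taub$ proved in Theorem \ref{thm-bl-b}. Given two log resolutions $f_i\colon X_i\to X$ ($i=1,2$) as in Definition \ref{def-taub-KLT}, Hironaka's theorem produces a common refinement: a third log resolution $f_3\colon X_3\to X$ equipped with birational morphisms $g_i\colon X_3\to X_i$ such that $f_i\circ g_i=f_3$. By symmetry the task reduces to showing $\taub(X_1,f_1^*\gamma)=\taub(X_3,f_3^*\gamma)$, i.e., the invariance of $\taub$ along a birational morphism between smooth projective varieties carrying SNC divisors.

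Applying the SNC-compatible version of the Abramovich--Karu--Matsuki--W\l odarczyk weak factorization theorem to $g_1\colon X_3\to X_1$ decomposes it into a finite chain of blow-ups and blow-downs along smooth irreducible centers, each having normal crossings with the transformed SNC divisor at every intermediate stage. The KLT discrepancy bound $a_j>-1$ guarantees that no component acquires multiplicity $-d$ throughout the process, so Condition $(\star_d)$ is preserved and every intermediate pair is a genuine $d$-Calabi--Yau pair. At each blow-up step one invokes Theorem \ref{thm-bl-b} to conclude the invariance of $\taub$, and chaining these equalities along the factorization yields the required identity.

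The main obstacle I anticipate is the positivity hypothesis built into Theorem \ref{thm-bl}: whenever a blow-up center $Y$ is contained in a component $D'_j$ of the ambient SNC divisor, one needs $m'_j>0$. Since KLT exceptional divisors can carry negative multiplicity $da_j\in(-d,0)$, this positivity can fail for some centers produced by a generic factorization. Overcoming it requires either refining the factorization---inserting auxiliary blow-ups or using a torific/principalizing variant of AKMW---so that every center is either transverse to or contained only in components of strictly positive multiplicity, or extending Theorem \ref{thm-bl-b} to arbitrary multiplicities. For the latter route, the localizable contributions $\chi'_d,\chi''_d$ already satisfy the unconditional blow-up identity of Proposition \ref{prop-Blowup}, so the obstruction reduces to controlling the analytic $\tau_d$-term alone; this can be handled by treating the multiplicities as deformation parameters and invoking analytic continuation, consistent with the motivic interpretation developed in \textsection \ref{ch-motivic}.
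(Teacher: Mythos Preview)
Your core approach—dominate two resolutions by a common one and invoke the blow-up invariance Theorem \ref{thm-bl-b}—is exactly the paper's; the paper's proof is two lines and does not spell out the weak-factorization step you make explicit.

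Where you go beyond the paper is in flagging the positivity hypothesis built into Theorem \ref{thm-bl} (and hence Theorem \ref{thm-bl-b}): the blow-up center must lie only in components with $m_j>0$. The paper does not address this either. Note, however, that in the setting of Definition \ref{def-taub-KLT} the divisor $D\in|dK_X|$ is effective, so on any resolution the multiplicity along $E_j$ is $(\text{coeff of }E_j\text{ in }f^*D)+da_j\geqslant da_j$; for \emph{canonical} singularities ($a_j\geqslant 0$) every component therefore has non-negative multiplicity and the hypothesis is automatic throughout any factorization. Your concern is live only for strictly KLT varieties. Your proposed fixes in that case are not yet arguments: there is no off-the-shelf strengthening of AKMW guaranteeing that centers avoid negative-multiplicity components, and ``analytic continuation in the multiplicities'' is not well-posed, since the $m_j$ are integers entering $\tau_d$ through the discrete residue construction of the $\gamma_J$ (Definition \ref{def-res}) rather than as continuous parameters. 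So you have correctly located a subtlety that the paper's own proof also glosses over, without closing it in the strictly KLT case.
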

\begin{proof}
As any two resolutions are dominated by a third one,
it suffices to show that
for a further blow-up $g\colon X''\to X'$ satisfying the same properties,
we have
\begin{equation}
\taub\big(X', f^*\gamma\big) = \taub\big(X'', g^*f^*(\gamma)\big) \;.
\end{equation}
But this follows from Theorem \ref{thm-bl-b}.
\end{proof}

\begin{lemme}
For any $z\in \C^*$,
the following identity holds,
\begin{equation}
\taub\big(X,z\gamma\big) = \taub\big(X,\gamma\big) - \frac{\chi_d(X,D)}{12}\log|z|^{2/d} \;.
\end{equation}
Consequently, $\taub\big(X,D\big)$ is independent of $\gamma$.
\end{lemme}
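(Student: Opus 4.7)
The plan is to reduce the identity to a computation on a smooth $d$-Calabi--Yau pair. By Definition \ref{def-taub-KLT}, one has $\taub(X,\gamma)=\taub(X',f^*\gamma)$ for a log-resolution $f\colon X'\to X$ adapted to $\operatorname{div}(\gamma)$. Since $f^*(z\gamma)=z\,f^*\gamma$ and $\operatorname{div}(f^*(z\gamma))=\operatorname{div}(f^*\gamma)$, and interpreting $\chi_d(X,D)$ on the left-hand side as $\chi_d(X',\operatorname{div}(f^*\gamma))$ (the consistent singular extension, in agreement with the motivic-integration viewpoint of \textsection \ref{ch-motivic}), it suffices to establish
\[
\taub(X',z\,f^*\gamma)=\taub(X',f^*\gamma)-\tfrac{\chi_d(X',\operatorname{div}(f^*\gamma))}{12}\log|z|^{2/d}.
\]

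Assuming now that $X$ itself is smooth and $(X,\gamma)$ is a $d$-Calabi--Yau pair, I would next exploit the decomposition in Definition \ref{def-tau-b}: the invariants $\chi'_d(X,\gamma)$ and $\chi''_d(X,\gamma)$ depend only on the divisor $D=\operatorname{div}(\gamma)=\operatorname{div}(z\gamma)$, so they contribute nothing to $\taub(X,z\gamma)-\taub(X,\gamma)$. The entire difference therefore comes from $\tau_d(X,z\gamma)-\tau_d(X,\gamma)$, and within Definition \ref{def-BCOV-Pair} only the terms $a_J(\gamma,\omega)$ of \eqref{eq-def-aJ} have genuine dependence on $\gamma$ rather than on $\operatorname{div}(\gamma)$; the BCOV torsions $\tau_{\mathrm{BCOV}}(D_J,\omega)$ and the Bott--Chern integrals $b_{J,j}(\omega)$ are manifestly unaffected.

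Finally, I would track the scaling through the iterated residue construction of \eqref{eq-def-gammaJ}: each $\operatorname{Res}_{D_j}(\cdot)$ is $\C$-linear in its argument, hence $(z\gamma)_J=z\,\gamma_J$ and
\[
\log|(z\gamma)_J|^{2/d}_{K_J,\omega}=\log|z|^{2/d}+\log|\gamma_J|^{2/d}_{K_J,\omega}.
\]
Substituting into \eqref{eq-def-aJ} and applying the Chern--Gauss--Bonnet identity $\int_{D_J}c_{n-|J|}(TD_J,g^{TD_J}_\omega)=\chi(D_J)$ yields
\[
a_J(z\gamma,\omega)-a_J(\gamma,\omega)=\tfrac{\chi(D_J)}{12}\log|z|^{2/d}.
\]
Summing against $w_d^J$ with the explicit sign from Definition \ref{def-BCOV-Pair} gives the claimed $-\tfrac{1}{12}\log|z|^{2/d}\sum_J w_d^J\chi(D_J)=-\tfrac{\chi_d(X,D)}{12}\log|z|^{2/d}$, using Definition \ref{def-LocInv} applied to $\phi=\chi$.

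The calculation itself is routine once the reduction is set up; the only conceptual point to verify carefully is the first one, namely that $\chi_d(X,D)$ for the singular pair is to be read as $\chi_d(X',\operatorname{div}(f^*\gamma))$, so that the coefficient on the left matches the one produced on the right by the resolution. This is precisely the type of consistency that the birational invariance of the localizable/log-type functionals developed in \textsection \ref{ch-log} is designed to guarantee, and will presumably have been recorded just before this lemma.
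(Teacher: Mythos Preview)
Your argument is correct and follows the same route as the paper, which simply cites \cite[Proposition 3.4]{z2} for the scaling law of $\tau_d$; you have effectively reproduced the proof of that proposition and then added the (straightforward) reduction from the singular case to the resolution. One terminological quibble: for components with $m_j<0$, the map $\mathrm{Res}_{D_j}$ in Definition \ref{def-res} is not literally $\C$-linear (it passes through $\gamma^{-1}$ and a second inversion), but it is homogeneous of degree $1$, so your conclusion $(z\gamma)_J=z\,\gamma_J$ still holds.
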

\begin{proof}
This follows directly from \cite[Proposition 3.4]{z2}.
\end{proof}

In the sequel, we use the same notation as in \textsection \ref{subsect:MotInt}.

\begin{defn}
\label{def-GorensteinVol}
Let $X$ be a variety with KLT singularities.
In the situation of Definition \ref{def-KLT},
the \emph{Gorenstein volume} of $X$ is by definition
\begin{equation}
\mu^{\operatorname{Gor}}(X) :=
\sum_{J\subseteq\{1,\dots,l\}} \mathbb{L}^{|J|-n} \left(\prod_{j\in J} \frac{1-\mathbb{L}^{a_j}}{\mathbb{L}^{a_j+1}-1}\right)[E_{J}]
\in \widehat{\mathcal{M}} \;.
\end{equation}
In other words,
for any $d\in \N_{>0}$ divisible by the index of $X$,
$\mu^{\operatorname{Gor}}(X)$ is equal to $F_{d}(X', dK_{X'/X})$ defined in \eqref{eq-MotIntDef}.
By Theorem \ref{thm-MotInt},
the definition is independent of the choice of $d$ and the log-resolution $X'$.
%See for example \cite[Definition 3.4.3]{MotIntBook} for a resolution-free definition.
Note that when $X$ is smooth,
$\mu^{\operatorname{Gor}}(X) = \mathbb{L}^{-n}[X]$.
\end{defn}

Let $P_t: K_{0}(\Var_{\C}) \rightarrow \Z[t]$ be the ring homomorphism sending a smooth projective variety to its Poincar{\'e} polynomial,
which extends to a ring homomorphism $P_t: \widehat{\mathcal{M}}\to \Z[[t, t^{-1}]]$ (cf. \cite[\S 3.4.7]{MotIntBook}).

\begin{defn}
\label{def-stringy-inv}
Let $X$ be an $n$-dimensional variety with KLT singularities.
The \emph{stringy Poincar\'e polynomial} of $X$ is defined as
\begin{equation}
\label{eq-def-stringy}
P_{t}(X) := P_{t}(\mathbb{L}^{n}\mu^{\operatorname{Gor}}(X)) \;.
\end{equation}
Following Batyrev \cite{Batyrev98}, we define the \emph{stringy Betti numbers} of $X$ as the coefficients of $P_{t}(X)$.
The quantities $\chi(X)$, $\chi'(X)$ and $\chi''(X)$ are defined by the same formulas \eqref{eq-Def-Chi}--\eqref{eq-Def-Chi''}
with Betti numbers replaced by stringy Betti numbers.
If $X$ admits a crepant resolution $Y$,
the stringy invariants of $X$ equal to the corresponding invariants of $Y$.
\end{defn}

\begin{defn}
\label{def-BCOV-KLT}
Let $X$ be an $n$-dimensional KLT Calabi--Yau variety (Definition \ref{def-KLTCY}).
We define the (stringy) BCOV invariant of $X$ as
\begin{equation}
\label{eq-def-stringy-BCOV}
\tau(X) := \taub\big(X,\emptyset\big) + \frac{1}{2} \tau(\CP^1) \chi'(X)
+ \Big( \frac{1}{2}\tau(\CP^2) - \frac{3}{4}\tau(\CP^1) \Big) \chi''(X) \;,
\end{equation}
where $d\in \N_{>0}$ is divisible by the index of $X$ and such that $|dK_{X}|\neq \emptyset$,
$\taub\big(X,\emptyset\big)$ is defined in  \eqref{eq2-def-taub-KLT} (it is independent of $d$ by \cite[Proposition 3.3]{z2}),
$\chi'(X)$ and $\chi''(X)$ are the stringy invariants introduced in Definition \ref{def-stringy-inv}.
By \eqref{eq4-proof-thm-birat},
we recover the BCOV invariant when $X$ is smooth.
\end{defn}

\begin{rem}\label{rmk-OrbifoldBCOV}
Quotient singularities form one of the most important instances of KLT singularities,
see \cite[Proposition 5.20]{KollarMori}.
In particular,
(complex effective) orbifolds,
or equivalently, V-manifolds in the sense of Satake \cite{Satake56, Satake57},
are KLT.
A compact K\"ahler orbifold $X$ is called Calabi--Yau if $c_1(X)=0 \in H^2(X, \R)$.
On one hand, thanks to the orbifold version of the Beauville--Bogomolov decomposition theorem,
due to Campana \cite{Campana04} and Fujiki \cite{Fujiki83}, Calabi--Yau
orbifolds have torsion canonical divisor.
Therefore Calabi--Yau orbifolds are special cases of KLT Calabi--Yau varieties in the sense of Definition \ref{def-KLTCY},
hence their BCOV invariants can be defined as in Definition \ref{def-BCOV-KLT}.
On the other hand,
the Quillen metric can be extended to orbifolds (see Ma \cite[\textsection 2]{Ma-orbifold})
and enjoys similar properties as in the smooth case (see Ma \cite{Ma-orbifold, Ma-orbifold2}).
Hence the definition of BCOV invariant (see Definition \ref{def-BCOV-Pair}) can be directly extended to Calabi--Yau orbifolds. We plan to
compare the two definitions in the future.
\end{rem}

\subsection{Curvature formula}
We extend the curvature formula \cite[Theorem 0.4]{z2} to
locally trivial families (in the sense of Flenner--Kosarew \cite[Page 627]{FlennerKosarew87}) of KLT Calabi--Yau varieties.

\begin{defn}
\label{def-LocTrivial}
Let $S$ be a complex manifold and $\mathcal{X}$ a complex space.
Let $\pi\colon\mathcal{X}\to S$ be a flat and proper morphism,
viewed as a family of complex spaces $\big(X_t := \pi^{-1}(t)\big)_{t\in S}$.
The family $\pi$ is called \emph{locally trivial}
if for any $t\in S$ and any $x\in X_t$,
there are analytic open neighborhoods $\Delta\subset S$ of $t$ and $\mathcal{U}\subset\pi^{-1}(\Delta)\subset \mathcal{X}$ of $x$
such that we have a $\Delta$-isomorphism $(\mathcal{U}\cap X_t) \times \Delta \simeq \mathcal{U}$.
\end{defn}

Locally trivial families admit (strong) simultaneous resolution.

\begin{lemme}[Simultaneous resolution {\cite[Lemma 4.9 and Remark 4.10]{BakkerLehn}}]
\label{lem-SimultRes}
Let $\pi: \mathcal{X}\to S$ be a locally trivial family.
Then there is a proper bimeromorphic $S$-morphism $\mathcal{Y}\to\mathcal{X}$,
which is a composition of blow-ups along locally trivial centers
which are smooth over $S$ and disjoint from the smooth locus of $\pi$,
such that the composed map $\pi': \mathcal{Y}\to S$ is a submersion and for any $t\in S$,
the map $Y_t := (\pi')^{-1}(t) \to X_t$ is a log resolution.
\end{lemme}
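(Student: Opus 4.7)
The plan is to reduce the global statement to Hironaka-style resolution on a single fiber by exploiting local triviality, and then to glue using the functoriality of a canonical (equivariant) resolution algorithm. Since local triviality of $\pi\colon\mathcal{X}\to S$ means that around each $x\in X_t$ there is an open neighborhood $\mathcal{U}\subset\mathcal{X}$ and an open polydisk $\Delta\subset S$ with an $S$-biholomorphism $\mathcal{U}\simeq (\mathcal{U}\cap X_t)\times\Delta$, the first step is to fix an open cover $\{(\mathcal{U}_\alpha,\Delta_\alpha)\}$ of $\mathcal{X}$ of this form.

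First I would invoke a canonical (functorial) embedded resolution of singularities, in the sense of Bierstone--Milman or Włodarczyk; call it $\mathrm{Res}$. The essential property is that $\mathrm{Res}$ is functorial with respect to smooth morphisms: if $f\colon W\to V$ is smooth and $V$ is a complex space, then $\mathrm{Res}(W) = \mathrm{Res}(V)\times_V W$, with matching sequences of blow-up centers. Applied to a product $Y\times\Delta$ with $\Delta$ smooth, this says that $\mathrm{Res}(Y\times\Delta)$ is obtained by pulling back the resolution of $Y$, so the blow-up centers are of the form $Z_j\times\Delta$ with $Z_j\subset Y$ smooth. On each chart $\mathcal{U}_\alpha\simeq F_\alpha\times\Delta_\alpha$ (where $F_\alpha$ is the fiber slice), I would then apply $\mathrm{Res}$ to produce a finite sequence of blow-ups whose centers are of the form $Z_{\alpha,j}\times\Delta_\alpha$, hence smooth and submersive over $\Delta_\alpha$, and disjoint from the smooth locus of $\pi|_{\mathcal{U}_\alpha}$ by construction of a resolution algorithm (centers lie over the singular locus).

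Next I would glue. On an overlap $\mathcal{U}_\alpha\cap\mathcal{U}_\beta$, both local resolutions agree because the functoriality of $\mathrm{Res}$ under smooth morphisms (applied to the restriction to the overlap) forces the two sequences of blow-up centers to coincide as subspaces of $\mathcal{U}_\alpha\cap\mathcal{U}_\beta$; the identification of product decompositions does not matter precisely because $\mathrm{Res}$ is intrinsic. Thus the locally defined centers patch to globally defined complex subspaces $\mathcal{Z}_j\subset\mathcal{X}$ (and, inductively, in the successive blow-ups), each of which is locally of product form and hence locally trivial over $S$ and smooth over $S$. Blowing these up in order yields the required $\mathcal{Y}\to\mathcal{X}$, and for every $t\in S$ the fiber $Y_t\to X_t$ is, locally and hence globally, exactly $\mathrm{Res}(X_t)$, which is a log resolution. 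Smoothness of $\pi'\colon\mathcal{Y}\to S$ then follows because $\mathcal{Y}$ is locally a product of a smooth space with $\Delta_\alpha$.

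The main obstacle is ensuring that the chosen resolution algorithm is genuinely functorial under smooth base change in the analytic category, so that the locally defined towers of centers agree on overlaps without requiring any compatibility of the local trivializations themselves; without this, the patching step collapses. A secondary technical point is verifying that the centers, which are exceptional divisors in successive blow-ups, remain locally trivial over $S$ after each blow-up step — this is handled inductively by the same functoriality, since at each stage the new ambient space is itself locally a product over $\Delta_\alpha$.
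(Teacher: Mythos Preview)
The paper does not give its own proof of this lemma; it simply cites Bakker--Lehn. Your proposal is essentially the argument in that reference: invoke a canonical resolution algorithm in the analytic category that is functorial under smooth morphisms (Bierstone--Milman, W{\l}odarczyk, Villamayor), apply it on trivializing charts where it reduces to resolving the fiber slice, and glue the resulting towers of centers using functoriality on overlaps. Your identification of the main technical point---that functoriality in the analytic category is what makes the patching work regardless of the choice of local trivialization---is exactly right, and is the substance of the cited result.
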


In the sequel,
a Hodge structure (resp. variation of Hodge structures)
is a finite direct sum of pure polarizable $\Q$-Hodge structures (resp. variations of pure polarizable $\Q$-Hodge structures),
possibly of different weights.
Following \cite[Proposition 2.8]{fl}, \cite[(5.6)]{efm} and \cite[(0.6)]{z},
we make the following definition:

\begin{defn}
Let $S$ be a complex manifold.
Let $(\mathbb{H}, \mathcal{H}, F^{\bullet})$ be a variation of Hodge structures over $S$,
where $\mathbb{H}$ is a local system,
$\mathcal{H}=\mathbb{H}\otimes \mathcal{O}_{S}$ and $F^{\bullet}$ is a Hodge filtration on $\mathcal{H}$.
For any $k\in \Z$,
denote by $\mathbb{H}^k$ the weight-$k$ part of the variation.
For any $p, q\in \Z$,
denote $\mathcal{H}^{p,q}:=\Gr_{F}^{p}\mathcal{H}^{p+q}$,
which we view as a holomorphic vector bundle over $S$.
The \emph{Hodge form} of the variation is the following  $(1, 1)$-form on $S$:
\begin{equation}
\omega_{\mathbb{H}} := \frac{1}{2} \sum_{p, q} (-1)^{p+q}(p-q)c_{1}(\mathcal{H}^{p,q}, g^{\mathcal{H}^{p,q}})
\in A^{1,1}(S) \;.
\end{equation}
Here $g^{\mathcal{H}^{p,q}}$ is a Hermitian metric on $ \mathcal{H}^{p,q}$ such that $g^{\mathcal{H}^{p,q}}(u,u)=g^{\mathcal{H}^{q,p}}(\overline{u},\overline{u})$.
One can show (cf. \cite[Proposition 1.1]{z}) that
$\omega_{\mathbb{H}}$ is independent of the Hermitian metrics $g^{\mathcal{H}^{p,q}}$.
Clearly,
$\omega$ is additive with respect to short exact sequences of variations of Hodge structures,
hence it gives rise to a group homomorphism:
\begin{equation}
\label{eq-Hodge-form}
\omega: K_{0}(\VHS_S)\to A^{1,1}(S) \;,
\end{equation}
where $\VHS_S$ is the category of variations of Hodge structures over $S$.
\end{defn}

On the other hand,
the Hodge realization can be performed in the relative setting:
given a complex variety $S$, there is a group homomorphism
\begin{align}
\begin{split}
\chi_{\Hdg, S}\colon  K_{0}(\Var_{S})&\to K_{0}(\MHM_{S})\\
(\pi\colon \mathcal{X}\to S) & \mapsto \operatorname{R\pi_{!}}\Q_{\mathcal{X}} \;,
\end{split}
\end{align}
where $\MHM_{S}$ is the category of mixed Hodge modules over $S$.
However,
note that if we start with a smooth proper morphism  $\pi: \mathcal{X}\to S$,
then the image $\operatorname{R\pi}_{!}\Q_{\mathcal{X}}$ lies in $K_{0}(\VHS_{S})$,
the subgroup generated by variations of Hodge structures over $S$.

Now let $\pi: \mathcal{X}\to S$ be a locally trivial family of KLT Calabi--Yau varieties.
The \emph{Gorenstein volume} in Definition \ref{def-GorensteinVol} can be extended as follows.
Take a simultaneous resolution (see Lemma \ref{lem-SimultRes}) $f\colon \mathcal{X'} \to \mathcal{X}$
with simple normal crossing exceptional divisor $\mathcal{E}=\bigcup_{j=1}^{l} \mathcal{E}_j$,
which is locally trivial over $S$.
For any $1\leqslant j\leqslant l$,
let $a_j\in \Q_{>-1}$ be the discrepancy number of the resolution $f$ along $\mathcal{E}_j$.
Then define the \emph{relative Gorenstein volume}
\begin{equation}
\mu^{\Gor}(\mathcal{X}/S) := \sum_{J\subseteq \{1,\dots,l\}} \mathbb{L}^{|J|-n}
\left(\prod_{j\in J} \frac{1-\mathbb{L}^{a_j}}{\mathbb{L}^{a_j+1}-1}\right)[\mathcal{E}_{J}/S]
\in \widehat{\mathcal{M}_{S}}[\mathbb{L}^{1/d}] \;,
\end{equation}
where $\widehat{\mathcal{M}_{S}}$ is the completion of $K_{0}(\Var_{S})[\mathbb{L}^{-1}]$ with respect to the dimension filtration.
Similarly to Theorem \ref{thm-MotInt},
$\mu^{\Gor}(\mathcal{X}/S)$ is independent of the resolution $f$.

Taking the Hodge realization
and denoting $\mathbf{L}:=\Q_{S}(-1)$ the Lefschetz variation of Hodge structure over $S$,
we get
\begin{equation}
\label{eq-Stringy-HodgeStructure}
\chi_{\Hdg,S}\left(\mu^{\Gor}(\mathcal{X}/S)\right) = \sum_{J\subseteq\{1,\dots,l\}} \mathbf{L}^{|J|-n}
\left(\prod_{j\in J} \frac{1-\mathbf{L}^{a_j}}{\mathbf{L}^{a_j+1}-1}\right)H^\bullet(\mathcal{E}_J/S) \;,
\end{equation}
where $H^\bullet(\mathcal{E}_J/S)$ denotes the variation of Hodge structures
$\operatorname{R\pi}_{J,*}\Q_{\mathcal{E}_{J}} := \bigoplus_k \operatorname{R}^k \pi_{J, *}\Q_{\mathcal{E}_J}$,
where $\pi_J: \mathcal{E}_J\to S$ is the natural projection.

\begin{defn}
\label{def-stringy-Hodge-form}
Let $\pi: \mathcal{X}\to S$ be as above.
Its (stringy) \emph{Hodge form},
denoted by $\omega_{\Hdg, \mathcal{X}/S}$,
is the image of $\chi_{\Hdg, S}\left(\mu^{\Gor}(\mathcal{X}/S)\right)$ via \eqref{eq-Hodge-form}.
\end{defn}

\begin{lemme}
\label{lem-stringy-Hodge-form}
Notation is as before.
Taking a simultaneous resolution $f\colon \mathcal{X}'\to \mathcal{X}$ as above,
the Hodge form can be computed as
\begin{equation}
\omega_{\Hdg,\mathcal{X}/S} =
\sum_{J\subseteq\{1,\dots,l\}} \left(\prod_{j\in J} \frac{-a_j}{a_j+1}\right)\omega_{H^\bullet(\mathcal{E}_J/S)} \;.
\end{equation}
\end{lemme}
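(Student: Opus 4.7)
The plan is to apply the Hodge form functional $\omega\colon K_{0}(\VHS_{S}) \to A^{1,1}(S)$ term-by-term to the expression \eqref{eq-Stringy-HodgeStructure} for $\chi_{\Hdg, S}(\mu^{\Gor}(\mathcal{X}/S))$, and to show that the effect on the rational coefficients is given by ``evaluation at $\mathbf{L}=1$''.

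First I would establish the key property that $\omega$ is invariant under Tate twist: for any variation of Hodge structures $\mathbb{H}$ over $S$ and any $k\in\Z$, one has $\omega(\mathbf{L}^{k}\mathbb{H}) = \omega(\mathbb{H})$. This is a direct computation from the definition: since $(\mathbf{L}^{k}\mathbb{H})^{p,q} = \mathbb{H}^{p-k,q-k}$, the reindexing $(p,q)\mapsto(p+k,q+k)$ preserves the signs $(-1)^{p+q}$ and $(p-q)$, hence preserves the defining sum. Combined with the additivity of $\omega$, it follows that for any Laurent polynomial $P(x)\in\Z[x^{\pm 1}]$, one has $\omega(P(\mathbf{L})\mathbb{H}) = P(1)\omega(\mathbb{H})$.

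Next I would interpret the rational-function coefficients in \eqref{eq-Stringy-HodgeStructure}. Let $d\in\N_{>0}$ satisfy $da_{j}\in\Z$ for all $j$. Since $\chi_{\Hdg,S}(\mu^{\Gor}(\mathcal{X}/S))$ lives in a completion of $K_{0}(\VHS_{S})$ with formal Tate powers $\mathbf{L}^{1/d}$, the Tate-invariance of $\omega$ established above allows one to extend $\omega$ uniquely to this completion by the rule
\begin{equation}
\omega\bigl(f(\mathbf{L})\,\mathbb{H}\bigr) = f(1)\,\omega(\mathbb{H}),
\end{equation}
whenever $f(\mathbf{L})\in\Q(\mathbf{L}^{1/d})$ is regular at $\mathbf{L}=1$ after cancellation. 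Indeed, after clearing the common factor of $(\mathbf{L}-1)$, the coefficient
\begin{equation}
\frac{1-\mathbf{L}^{a_{j}}}{\mathbf{L}^{a_{j}+1}-1}
\end{equation}
becomes regular at $\mathbf{L}=1$, and L'Hôpital's rule gives
\begin{equation}
\lim_{x\to 1}\frac{1-x^{a_{j}}}{x^{a_{j}+1}-1} = \frac{-a_{j}}{a_{j}+1}.
\end{equation}

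Applying $\omega$ to the $J$-th summand of \eqref{eq-Stringy-HodgeStructure}, the Tate factor $\mathbf{L}^{|J|-n}$ disappears by Tate-invariance, and the remaining rational coefficient specializes to $\prod_{j\in J}\frac{-a_{j}}{a_{j}+1}$. Summing over $J$ yields the claimed formula.

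The main obstacle is setting up rigorously the extension of $\omega$ to the completion of $K_{0}(\VHS_{S})$ allowing fractional Tate twists and rational functions in $\mathbf{L}$. If a direct extension feels unsafe, one can reduce to the case of integer discrepancies by pulling back along a finite ramified cover $S'\to S$ whose ramification clears the denominators of the $a_{j}$, check the identity there where all coefficients are honest Laurent polynomials in $\mathbf{L}$, and then descend; Tate-invariance ensures compatibility of both sides with the pullback.
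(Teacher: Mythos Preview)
Your main argument is correct and is exactly the paper's approach: apply the Hodge-form homomorphism $\omega$ to \eqref{eq-Stringy-HodgeStructure} and use the Tate-invariance $\omega(\mathbf{L}H^\bullet)=\omega(H^\bullet)$, so that the coefficients specialize to $\prod_{j\in J}\frac{-a_j}{a_j+1}$ (the paper states this in one sentence and leaves the $\mathbf{L}\to 1$ computation implicit, having already done the analogous calculation in \S\ref{subsec-Heuristic}).

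One remark: your proposed fallback via a finite ramified cover $S'\to S$ does not work as stated. The discrepancies $a_j$ are invariants of the resolution $f\colon\mathcal{X}'\to\mathcal{X}$ along the fibers; base change on $S$ leaves the fibers $X_s$ and their log-resolutions unchanged, so it cannot turn the $a_j$ into integers. Fortunately this fallback is unnecessary: the Tate-invariance you established already shows that $\omega$ factors through the quotient by $(\mathbf{L}^{1/d}-1)$, on which the rational expressions $\frac{1-\mathbf{L}^{a_j}}{\mathbf{L}^{a_j+1}-1}$ become the scalars $\frac{-a_j}{a_j+1}$ without ambiguity.
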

\begin{proof}
It suffices to apply the homomorphism \eqref{eq-Hodge-form} to the right hand side of \eqref{eq-Stringy-HodgeStructure},
and use the fact that $\omega_{H^\bullet} = \omega_{\mathbf{L}H^\bullet}$.
\end{proof}

\begin{defn}
\label{def-WP-form}
Let $\pi: \mathcal{X}\to S$ be as above and $d$ an integer divisible by the index of fibers.
The \emph{Weil--Petersson form} of $\pi: \mathcal{X}\to S$,
denoted by $\omega_{\mathrm{WP}, \mathcal{X}/S}$,
is defined as follows:
for any open subset $U\subseteq S$
and any nowhere vanishing section
$\gamma\in H^0(U, \pi_*\mathcal{O}(dK_{\mathcal{X}/S}))$ viewed as
holomorphic family $\big(\gamma_s\big)_{s\in U}$,
%$\big(\gamma_s\in H^0(X_s,\mathcal{O}_{X_s}(dK_{X_s}))\backslash\{0\}\big)_{s\in U}$,
we define
\begin{equation}
\label{eqn:WPdefinition}
\omega_{\mathrm{WP},\mathcal{X}/S}\big|_U =
- \frac{\overline{\partial}\partial}{2\pi i} \log\int_{X_{\operatorname{reg}}} \big|\gamma\overline{\gamma}\big|^{1/d} \;,
\end{equation}
where $\int_{X_{\operatorname{reg}}} \big|\gamma\overline{\gamma}\big|^{1/d}$ is the function
$s \mapsto \int_{X_{s,\operatorname{reg}}} \big|\gamma_s\overline{\gamma_s}\big|^{1/d}$
whose convergence is guaranteed by Proposition \ref{prop-Integrability}. Thanks to the next lemma, the  partial derivatives make sense in \eqref{eqn:WPdefinition}.
\end{defn}

\begin{lemme}
\label{lem-int-smooth}
The function $S \ni s \mapsto \int_{X_{s,\operatorname{reg}}} \big|\gamma_s\overline{\gamma_s}\big|^{1/d}$ is smooth.
\end{lemme}
\begin{proof}
The question is local on $S$.
We may assume that $\gamma\in H^0(S, \pi_*\mathcal{O}(dK_{\mathcal{X}/S}))$, i.e. a holomorphic family of $d$-canonical forms on the fibers.
By Lemma \ref{lem-SimultRes},
there is a simultaneous log-resolution $f\colon\mathcal{X}'\to \mathcal{X}$ with simple normal crossing exceptional divisor $\mathcal{E}=\bigcup_{j=1}^{l} \mathcal{E}_j$,
which is also locally trivial over $S$.
Set $\gamma' = f^*\gamma$,
which is a family of meromorphic $d$-canonical forms on the fibers of $\mathcal{X}'/S$.
We obviously have
\begin{equation}
\int_{X_{s,\operatorname{reg}}} \big|\gamma_s\overline{\gamma_s}\big|^{1/d} = \int_{X'_{s}} \big|\gamma'_s\overline{\gamma'_s}\big|^{1/d} \;.
\end{equation}
Let
\begin{equation}
\Big(\alpha_k: \C^m \supseteq U_k \hookrightarrow  S \Big)_{k=1,\cdots,p} \;,\hspace{5mm}
\Big(\beta_k: \C^m \times \C^n \supseteq V_k \hookrightarrow  \mathcal{X}'\Big)_{k=1,\cdots,p}
\end{equation}
be holomorphic local charts such that
\begin{itemize}
\item[-] for each $k$,
we have $(\pi \circ f)(\beta_k(V_k)) = \alpha_k(U_k)$;
\item[-] for each $k$,
the map $\alpha_k^{-1} \circ \pi \circ f \circ \beta_k$ is given by the canonical projection $\C^m \times \C^n \rightarrow \C^m$;
\item[-] for each $k$ and each $\mathcal{E}_j$,
either $\beta_k^{-1}(\mathcal{E}_j) = \emptyset$ or $\beta_k^{-1}(\mathcal{E}_j) = \big\{ (z_1,\cdots,z_m,w_1,\cdots,w_n) \in V_k \subseteq \C^m \times \C^n \;:\; w_q = 0 \big\}$,
where $q\in\{1,\cdots,n\}$ depends on $j,k$.
\end{itemize}
Let
\begin{equation}
\Big(\eta_k: \beta_k(V_k) \rightarrow [0,1] \Big)_{k=1,\cdots,p}
\end{equation}
be a partition of unity, i.e., each $\eta_k$ is of compact support and $\eta_1+\cdots+\eta_p = 1$.
We denote $\eta_k\big|_{X_s'} = \eta_{k,s}$.
It is sufficient to show that for each $k$, the function
\begin{equation}
\label{eq1-pf-lem-int-smooth}
s \mapsto \int_{X_s'} \eta_{k,s} \big|\gamma'_s\overline{\gamma'_s}\big|^{1/d}
\end{equation}
is smooth.
On the other hand,
by the KLT condition,
the order of poles of $\gamma'$ along each $\mathcal{E}_j$ is at most $d-1$.
Hence we have
\begin{equation}
\beta_k^*\gamma' = \frac{g}{w_1^{d-1} \cdots w_n^{d-1}} \big(\d w_1\wedge\cdots \wedge \d w_n\big)^d \;,
\end{equation}
where $g$ is a holomorphic function of $z_1,\cdots,z_m,w_1,\cdots,w_n$.
Then a straightforward computation shows that the integration \eqref{eq1-pf-lem-int-smooth} results in a smooth function in $z_1, \cdots, z_m$.
\end{proof}

We are now ready to prove Theorem \ref{thm-main-curvature},
which we state again for convenience.

\begin{thm}
\label{thm-curvature-KLT}
Let $\pi\colon \mathcal{X}\to S$ be a flat family of KLT Calabi--Yau varieties.
Assume that $\pi$ is locally trivial.
Then the function  $\tau(\mathcal{X}/S): S \ni t  \mapsto \tau(X_t)$ is smooth,
where $\tau$ is the BCOV invariant in Definition \ref{def-BCOV-KLT}.
Moreover,
we have
\begin{equation}
\label{eq-thm-curvature-KLT}
\frac{\overline{\partial}\partial}{2\pi i}\tau(\mathcal{X}/S) =
\omega_{\mathrm{Hdg},\mathcal{X}/S} - \frac{\chi(X)}{12} \omega_{\mathrm{WP},\mathcal{X}/S} \;,
\end{equation}
where $\chi(X)$  is the stringy Euler characteristic of a fiber of $\pi$,
$\omega_{\mathcal{X}/S}$ is the Hodge form in Definition \ref{def-stringy-Hodge-form},
and $\omega_{\mathrm{WP},\mathcal{X}/S}$ is the Weil--Petersson form in Definition \ref{def-WP-form}.
\end{thm}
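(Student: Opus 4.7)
The plan is to reduce the theorem to the smooth-case curvature formula of Zhang for BCOV invariants of pairs (a pair-version of \cite[Theorem 0.4]{z2}) via a simultaneous log-resolution of the family, and then to identify the resulting Hodge form and Weil--Petersson form with their stringy analogues.

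Working locally on $S$, I would shrink $S$ to an open neighborhood $U$ of an arbitrary point $s_{0}$ and choose an integer $d\in\N$ divisible by the index of $X_{s_{0}}$ together with a nowhere vanishing section $\gamma\in H^{0}(U,\pi_{*}\mathcal{O}(dK_{\mathcal{X}/S}))$; the local triviality of $\pi$ and $K_{X_{s}}\sim_{\Q}0$ ensure the existence of such a $\gamma$ after further shrinking. Lemma \ref{lem-SimultRes} then furnishes a simultaneous log-resolution $f\colon \mathcal{Y}\to\mathcal{X}$ over $U$, with $\pi'=\pi\circ f$ smooth and proper and exceptional divisor $\mathcal{E}=\sum_{j}\mathcal{E}_{j}$ whose components are smooth over $U$ with constant discrepancies $a_{j}>-1$. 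The pullback $\tilde\gamma:=f^{*}\gamma$ satisfies $\div(\tilde\gamma)=D=\sum_{j}da_{j}\mathcal{E}_{j}$, and each fiber pair $(Y_{s},D_{s})$ is a $d$-Calabi--Yau pair in the sense of Definition \ref{def-cy-pair}. Composing Definitions \ref{def-BCOV-KLT}, \ref{def-taub-KLT} and \ref{def-tau-b}, and using the identities $\chi_{d}(Y_{s},D_{s})=\chi(X_{s})$, $\chi_{d}'(Y_{s},D_{s})=\chi'(X_{s})$, $\chi_{d}''(Y_{s},D_{s})=\chi''(X_{s})$ (matching the pair invariants of Definition \ref{def-LocInv} with Batyrev's stringy invariants of Definition \ref{def-stringy-inv}, obtained by specializing the Hodge realization of $\mu^{\Gor}(X_{s})$ at $\mathbf{L}=1$ exactly as in \eqref{eq0-intro-MotInt}), one arrives at the cancellation
\begin{equation*}
\tau(X_{s})=\tau_{d}(Y_{s},\tilde\gamma_{s})+\frac{\chi(X_{s})}{12}\log\Big((2\pi)^{-n}\!\int_{X_{s,\mathrm{reg}}}|\gamma_{s}\overline{\gamma_{s}}|^{1/d}\Big).
\end{equation*}

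Next, I would invoke the smooth-family curvature formula for pair BCOV invariants, applied to $(\mathcal{Y}/U,\tilde\gamma)$, to deduce smoothness of the map $s\mapsto\tau_{d}(Y_{s},\tilde\gamma_{s})$ together with
\begin{equation*}
\frac{\bar\partial\partial}{2\pi i}\tau_{d}(\mathcal{Y}/U,\tilde\gamma)=\sum_{J\subseteq\{1,\dots,l\}}w_{d}^{J}\,\omega_{H^{\bullet}(\mathcal{E}_{J}/U)},
\end{equation*}
which equals $\omega_{\mathrm{Hdg},\mathcal{X}/S}|_{U}$ by Lemma \ref{lem-stringy-Hodge-form}. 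Since $f$ is a biholomorphism away from a measure-zero set, $\int_{X_{s,\mathrm{reg}}}|\gamma_{s}\overline{\gamma_{s}}|^{1/d}=\int_{Y_{s}}|\tilde\gamma_{s}\overline{\tilde\gamma_{s}}|^{1/d}$, which is smooth in $s$, and its logarithmic $\bar\partial\partial/(2\pi i)$ equals $-\omega_{\mathrm{WP},\mathcal{X}/S}|_{U}$ by Definition \ref{def-WP-form}. Assembling these two contributions gives the smoothness of $\tau(\mathcal{X}/S)$ on $U$ and the claimed curvature identity on $U$; the global statement on $S$ follows because the independence from the choices of $d$, $\gamma$ and $f$ is already built into Definitions \ref{def-taub-KLT} and \ref{def-BCOV-KLT}, so the local computations patch together.

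The main obstacle is the smooth-family curvature formula for pair BCOV invariants invoked in the second step: whereas the smooth Calabi--Yau case is exactly \cite[Theorem 0.4]{z2}, the pair version needed here must be carefully set up in the relative setting. One expects that Zhang's proof, based on anomaly formulas for Quillen metrics on the strata $\mathcal{E}_{J}$ together with their Bott--Chern secondary forms, extends verbatim to this pair setting and produces precisely the weighted sum of Hodge forms $\sum_{J} w_{d}^{J}\omega_{H^{\bullet}(\mathcal{E}_{J}/U)}$, with no residual Weil--Petersson contribution (the Weil--Petersson term is entirely absorbed into the explicit volume-correction term in Definition \ref{def-taub-KLT}). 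Once this pair curvature formula is in hand, matching with the stringy Hodge form of Definition \ref{def-stringy-Hodge-form} is a direct consequence of Lemma \ref{lem-stringy-Hodge-form}, and the identification of pair Euler characteristics with stringy Euler characteristics is a clean computation with the Gorenstein volume.
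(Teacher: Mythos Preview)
Your approach is essentially the same as the paper's: reduce via simultaneous log-resolution (Lemma \ref{lem-SimultRes}) to the pair curvature formula of \cite[Theorem 0.4]{z2}, then identify the resulting Hodge and Weil--Petersson terms with their stringy counterparts via Lemma \ref{lem-stringy-Hodge-form} and Definition \ref{def-WP-form}. The paper packages the volume correction differently, but the substance is identical.

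Your ``main obstacle'' is not an obstacle at all: \cite[Theorem 0.4]{z2} \emph{is} already the curvature formula for the pair invariant $\tau_d(X,\gamma)$---that is the subject of \cite{z2}---so there is nothing to extend. The paper invokes it directly to obtain
\[
\frac{\overline{\partial}\partial}{2\pi i}\tau_d\big(\mathcal{X}'/S,\textstyle\sum_jda_j\mathcal{E}_j/S\big)
=\sum_{J}w_d^J\,\omega_{H^\bullet(\mathcal{E}_J/S)}
-\frac{1}{12}\chi_d\big(X',\textstyle\sum_jda_jE_j\big)\,\omega_{\mathrm{WP},\mathcal{X}'/S},
\]
which \emph{does} carry a Weil--Petersson term. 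The discrepancy with your expected formula (Hodge sum only) is purely a normalization convention. In the paper's chain \eqref{eq1-pf-thm-curvature-KLT} the section is implicitly normalized so that the volume-correction in Definition \ref{def-taub-KLT} vanishes, and the WP term then sits entirely inside the pair curvature formula. You instead fix a \emph{holomorphic} section $\gamma$, retain the explicit volume term, and let its $\overline{\partial}\partial$ supply the WP contribution; in that convention the curvature of $\tau_d(Y_s,\tilde\gamma_s)$ is indeed the pure Hodge sum (one checks this already in the smooth Calabi--Yau case using \eqref{eq4-proof-thm-birat} together with the rescaling behavior of $\tau_d$). Both bookkeepings give the correct final answer, but when you actually cite \cite[Theorem 0.4]{z2} you must use it in the form matching your holomorphic convention---otherwise you will double-count the Weil--Petersson term and end up with coefficient $-\chi(X)/6$ instead of $-\chi(X)/12$.
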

\begin{proof}
The smoothness of the function $\tau_{\mathcal{X}/S}$ comes from the existence of simultaneous resolutions.

Let $d\in \N_{>0}$ be such that $|dK_{X_{t}}|\neq \emptyset$.
Notation being as before,
we have
\begin{align}
\label{eq1-pf-thm-curvature-KLT}
\begin{split}
\frac{\overline{\partial}\partial}{2\pi i} \tau(\mathcal{X}/S)
& = \frac{\overline{\partial}\partial}{2\pi i} \taub\big(\mathcal{X}/S, \emptyset\big) \\
& = \frac{\overline{\partial}\partial}{2\pi i} \taub\big(\mathcal{X}'/S, \sum_j da_j\mathcal{E}_j/S\big)
= \frac{\overline{\partial}\partial}{2\pi i} \tau_d\big(\mathcal{X}'/S, \sum_jda_j\mathcal{E}_j/S\big) \;,
\end{split}
\end{align}
Here the first and the third equalities come from the fact that
$\taub-\tau_d$ consists of topological invariants,
which are constant in locally trivial families.
The second equality comes from \eqref{eq1-def-taub-KLT}.
By Definition \ref{def-taub-KLT}, Definition \ref{def-BCOV-KLT} and \cite[Theorem 0.4]{z2},
we have
\begin{align}
\label{eq2-pf-thm-curvature-KLT}
\begin{split}
& \frac{\overline{\partial}\partial}{2\pi i} \tau_d\big(\mathcal{X}'/S, \sum_jda_j\mathcal{E}_j/S\big) \\
& = \sum_{J\subseteq\{1,\dots,l\}} \left(\prod_{j\in J} \frac{-a_j}{a_j+1}\right) \omega_{H^\bullet(\mathcal{E}_J/S)}
+ \frac{1}{12} \chi_d\big(X',\sum_jda_jE_j\big) \frac{\overline{\partial}\partial}{2\pi i} \int_{X'}\big|\gamma'\overline{\gamma'}\big|^{1/d} \;.
\end{split}
\end{align}
By Lemma \ref{lem-stringy-Hodge-form},
we have
\begin{equation}
\label{eq3-pf-thm-curvature-KLT}
\sum_{J\subseteq\{1,\dots,l\}} \left(\prod_{j\in J} \frac{-a_j}{a_j+1}\right) \omega_{H^\bullet(\mathcal{E}_J/S)}
= \omega_{\Hdg,\mathcal{X}/S} \;.
\end{equation}
Using Definition \ref{def-WP-form},
we can show that
\begin{equation}
\label{eq4-pf-thm-curvature-KLT}
\frac{\overline{\partial}\partial}{2\pi i} \int_{X'}\big|\gamma'\overline{\gamma'}\big|^{1/d} =
\frac{\overline{\partial}\partial}{2\pi i} \int_{X_\mathrm{reg}}\big|\gamma\overline{\gamma}\big|^{1/d} =
- \omega_{\mathrm{WP},\mathcal{X}/S} \;.
\end{equation}
By the definition of the stringy Euler characteristic,
we have
\begin{equation}
\label{eq5-pf-thm-curvature-KLT}
\chi_d\big(X',\sum_jda_jE_j\big) = \chi(X) \;.
\end{equation}
Combining \eqref{eq1-pf-thm-curvature-KLT}--\eqref{eq5-pf-thm-curvature-KLT},
we obtain \eqref{eq-thm-curvature-KLT}.
\end{proof}

\section{Birational invariance}
\label{ch-proof}

In this section, we prove our main result Theorem \ref{thm-main-singular}.
Although it clearly contains Theorem \ref{thm-main-BirInv} as the smooth case, we nevertheless choose to give first the proof in this special case to highlight the main idea:
\begin{proof}[Proof of Theorem \ref{thm-main-BirInv}]
	Let $X$ and $X'$ be $n$-dimensional birationally isomorphic Calabi--Yau manifolds.
	By the weak factorization theorem of Abramovich, Karu, Matsuki, and W{\l}odarczyk \cite[Theorem 0.3.1]{akmw} (see also \cite{MR2013783}),
	there is a sequence of blow-ups and blow-downs along smooth centers:
	\begin{equation}
	\label{eq-Seqence-proof-thm-birat}
	X=X_{0}\dasharrow X_{1}\dasharrow \dots \dasharrow X_{r-1} \dasharrow X_{r}=X' \;,
	\end{equation}
	such that for each $0\leqslant i\leqslant r$,
	the unique canonical divisor $D_i \in \big|K_{X_i}\big|$ is of simple normal crossing support.
	For each $i$,
	let $\gamma_i\in H^0\big(X_i, \mathcal{O}_{X_i}(D_i)\big) = H^0\big(X_i, \mathcal{O}_{X_{i}}(K_{X_i})\big)$
	be such that $\int_{X_i} \big|\gamma_i\overline{\gamma_i}\big| = (2\pi)^n$.
	By Theorem \ref{thm-bl-b} and \cite[Proposition 3.4]{z2},
	we have
	\begin{equation}
	\label{eq2-proof-thm-birat}
	\tau_1^{\mathrm{bir}}\big(X_i,\gamma_i\big) = \tau_1^{\mathrm{bir}}\big(X_{i+1},\gamma_{i+1}\big)
	\end{equation}
	for all $i$. Hence
	\[\tau_1^{\mathrm{bir}}\big(X , \gamma_0\big) = \tau_1^{\mathrm{bir}}\big(X' ,\gamma_r\big)\]
 	Combining this with \eqref{eq4-proof-thm-birat}, we see that in order to prove $\tau(X)=\tau(X')$, it is enough to show that $\chi'(X)=\chi'(X')$ and $\chi''(X)=\chi''(X')$. However, $\chi'(\cdot)$ and $\chi''(\cdot)$, defined in \eqref{eq-Def-Chi'} and \eqref{eq-Def-Chi''} respectively, are certain linear combinations of Betti numbers, hence are birational invariant for Calabi--Yau manifolds by Batyrev \cite{Batyrev99}.
\end{proof}

Now let us proceed to the proof in the general case.
%For the convenience of the reader, we restate Theorem \ref{thm-main-singular} here.

%\begin{thm}
%\label{thm-birat-KLT}
%Let $X$ and $X'$ be canonical Calabi--Yau varieties (Definition \ref{def-KLTCY}).
%If they are birational,
%then $\tau(X)=\tau(X')$.
%\end{thm}
\begin{proof}[Proof of Theorem \ref{thm-main-singular}]
Recall that $\mu^{\operatorname{Gor}}(\cdot)$ was defined in Definition \ref{def-GorensteinVol}.
By a result of Kontsevich \cite{Kontsevich} and Yasuda \cite[Proposition 1.2]{Yasuda},
which extends a result of Batyrev \cite{Batyrev99},
we have $\mu^{\operatorname{Gor}}(X)=\mu^{\operatorname{Gor}}(X')$.
Then,
by Definition \ref{def-stringy-inv},
we have
\begin{equation}
\label{eq1-proof-thm-birat-KLT}
\chi'(X)=\chi'(X') \;,\hspace{5mm}
\chi''(X)=\chi''(X') \;.
\end{equation}

Let $f\colon \widetilde{X}\to X$ and $f'\colon \widetilde{X'}\to X'$ be log-resolutions. Let $d\in \N_{>0}$ be such that $dK_{X}\sim 0$ and $dK_{X'}\sim 0$ as Cartier divisors.
Note that the hypothesis that $X$ and $X'$ have canonical singularities implies that any smooth birational model of $X$ and $X'$ admits a unique $d$-canonical divisor.

By Abramovich--Temkin \cite[Theorem 1.2.1 and \S 1.6]{AbramovichTemkin19},
there is a sequence of blow-ups and blow-downs along smooth centers:
\begin{equation}
\label{eq-Seqence-proof-thm-birat-KLT}
\widetilde{X}=X_{0}\dasharrow X_{1}\dasharrow \dots \dasharrow X_{r-1} \dasharrow X_{r}=\widetilde{X'} \;,
\end{equation}
such that for each $0\leqslant i\leqslant r$,
the (unique) $d$-canonical divisor $D_i \in \big|dK_{X_i}\big|$ is of simple normal crossing support.
For each $i$,
let $\gamma_i\in H^0\big(X_i, \mathcal{O}_{X_i}(D_i)\big) = H^0\big(X_i, \mathcal{O}_{X_{i}}(dK_{X_i})\big)$
be such that $\int_{X_i} \big|\gamma_i\overline{\gamma_i}\big|^{1/d} = (2\pi)^n$.
By Theorem \ref{thm-bl-b},
we have
\begin{equation}
\label{eq2-proof-thm-birat-KLT}
\taub\big(X_i,\gamma_i\big) = \taub\big(X_{i+1},\gamma_{i+1}\big)
\end{equation}
for all $i$.
Let $\gamma \in H^0\big(X, \mathcal{O}_{X}(dK_X)\big)$ and $\gamma' \in H^0\big(X', \mathcal{O}_{X'}(dK_{X'})\big)$
be such that $\int_X \big|\gamma\overline{\gamma}\big|^{1/d} = \int_{X'} \big|\gamma'\overline{\gamma'}\big|^{1/d} = (2\pi)^n$.
By Definition \ref{def-taub-KLT} and \cite[Proposition 3.4]{z2},
we have
\begin{align}
\label{eq3-proof-thm-birat-KLT}
\begin{split}
\taub\big(X,\emptyset\big) & = \taub\big(X,\gamma\big) = \taub\big(X_0,\gamma_0\big) \;,\\
\taub\big(X',\emptyset\big) & = \taub\big(X',\gamma'\big) = \taub\big(X_r,\gamma_r\big) \;.
\end{split}
\end{align}
From Definition \ref{def-BCOV-KLT},
\eqref{eq1-proof-thm-birat-KLT}, \eqref{eq2-proof-thm-birat-KLT} and \eqref{eq3-proof-thm-birat-KLT},
we obtain $\tau(X)=\tau(X')$.
\end{proof}

\bigskip

\bibliographystyle{abbrv}
\bibliography{bcovfuzhang}

\begin{thebibliography}{10}

\bibitem{akmw}
D.~Abramovich, K.~Karu, K.~Matsuki, and J.~W{\l}odarczyk.
\newblock Torification and factorization of birational maps.
\newblock {\em J. Amer. Math. Soc.}, 15(3):531--572, 2002.

\bibitem{AbramovichTemkin19}
D.~Abramovich and M.~Temkin.
\newblock Functorial factorization of birational maps for qe schemes in
  characteristic 0.
\newblock {\em Algebra Number Theory}, 13(2):379--424, 2019.

\bibitem{BakkerLehn}
B.~Bakker and C.~Lehn.
\newblock The global moduli theory of symplectic varieties.
\newblock {\em Journal f\"{u}r die Reine und Angewandte Mathematik. [Crelle's
  Journal]}, 2022.

\bibitem{Batyrev98}
V.~V. Batyrev.
\newblock Stringy {H}odge numbers of varieties with {G}orenstein canonical
  singularities.
\newblock In {\em Integrable systems and algebraic geometry ({K}obe/{K}yoto,
  1997)}, pages 1--32. World Sci. Publ., River Edge, NJ, 1998.

\bibitem{Batyrev99}
V.~V. Batyrev.
\newblock Birational {C}alabi-{Y}au {$n$}-folds have equal {B}etti numbers.
\newblock In {\em New trends in algebraic geometry ({W}arwick, 1996)}, volume
  264 of {\em London Math. Soc. Lecture Note Ser.}, pages 1--11. Cambridge
  Univ. Press, Cambridge, 1999.

\bibitem{bfm}
P.~Baum, W.~Fulton, and R.~MacPherson.
\newblock Riemann-{R}och for singular varieties.
\newblock {\em Inst. Hautes \'{E}tudes Sci. Publ. Math.}, (45):101--145, 1975.

\bibitem{BeauvilleJDG}
A.~Beauville.
\newblock Vari\'{e}t\'{e}s {K}\"{a}hleriennes dont la premi\`ere classe de
  {C}hern est nulle.
\newblock {\em J. Differential Geom.}, 18(4):755--782 (1984), 1983.

\bibitem{bcov}
M.~Bershadsky, S.~Cecotti, H.~Ooguri, and C.~Vafa.
\newblock Holomorphic anomalies in topological field theories.
\newblock {\em Nuclear Phys. B}, 405(2-3):279--304, 1993.

\bibitem{bcov2}
M.~Bershadsky, S.~Cecotti, H.~Ooguri, and C.~Vafa.
\newblock Kodaira-{S}pencer theory of gravity and exact results for quantum
  string amplitudes.
\newblock {\em Comm. Math. Phys.}, 165(2):311--427, 1994.

\bibitem{bb}
A.~Berthomieu and J.-M. Bismut.
\newblock Quillen metrics and higher analytic torsion forms.
\newblock {\em J. Reine Angew. Math.}, 457:85--184, 1994.

\bibitem{b97}
J.-M. Bismut.
\newblock Quillen metrics and singular fibres in arbitrary relative dimension.
\newblock {\em J. Algebraic Geom.}, 6(1):19--149, 1997.

\bibitem{b04}
J.-M. Bismut.
\newblock Holomorphic and de {R}ham torsion.
\newblock {\em Compos. Math.}, 140(5):1302--1356, 2004.

\bibitem{ble}
J.-M. Bismut and G.~Lebeau.
\newblock Complex immersions and {Q}uillen metrics.
\newblock {\em Inst. Hautes \'Etudes Sci. Publ. Math.}, (74):ii+298 pp. (1992),
  1991.

\bibitem{BorCal09}
L.~Borisov and A.~C\u{a}ld\u{a}raru.
\newblock The {P}faffian-{G}rassmannian derived equivalence.
\newblock {\em J. Algebraic Geom.}, 18(2):201--222, 2009.

\bibitem{brid}
T.~Bridgeland.
\newblock Flops and derived categories.
\newblock {\em Invent. Math.}, 147(3):613--632, 2002.

\bibitem{BKR}
T.~Bridgeland, A.~King, and M.~Reid.
\newblock The {M}c{K}ay correspondence as an equivalence of derived categories.
\newblock {\em J. Amer. Math. Soc.}, 14(3):535--554, 2001.

\bibitem{Campana04}
F.~Campana.
\newblock Orbifoldes \`a premi\`ere classe de {C}hern nulle.
\newblock In {\em The {F}ano {C}onference}, pages 339--351. Univ. Torino,
  Turin, 2004.

\bibitem{cdgp}
P.~Candelas, X.~de~la Ossa, P.~Green, and L.~Parkes.
\newblock A pair of {C}alabi-{Y}au manifolds as an exactly soluble
  superconformal theory.
\newblock {\em Nuclear Phys. B}, 359(1):21--74, 1991.

\bibitem{MotIntBook}
A.~Chambert-Loir, J.~Nicaise, and J.~Sebag.
\newblock {\em Motivic integration}, volume 325 of {\em Progress in
  Mathematics}.
\newblock Birkh\"{a}user/Springer, New York, 2018.

\bibitem{NMSP2}
H.-L. Chang, S.~Guo, and J.~Li.
\newblock Polynomial structure of {G}romov-{W}itten potential of quintic
  {$3$}-folds.
\newblock {\em Ann. of Math. (2)}, 194(3):585--645, 2021.

\bibitem{NMSP3}
H.-L. Chang, S.~Guo, J.~Li, and W.-P. Li.
\newblock {BCOV}’s {F}eynman rule of quintic 3-folds.
\newblock {\em preprint, arXiv:1810.00394}, 2018.

\bibitem{NMSP1}
H.-L. Chang, S.~Guo, J.~Li, and W.-P. Li.
\newblock The theory of {N}-{M}ixed-{S}pin-{P} fields.
\newblock {\em Geom. Topol. 25(2): 775-811}, 2021.

\bibitem{CLLL15}
H.-L. Chang, J.~Li, W.-P. Li, and C.-C.~M. Liu.
\newblock Mixed-{S}pin-{P} fields of {F}ermat quintic polynomials.
\newblock {\em Cambridge Journal of Mathematics, Volume 7, Number 3}, 2019.

\bibitem{CLLL16}
H.-L. Chang, J.~Li, W.-P. Li, and C.-C.~M. Liu.
\newblock An effective theory of {GW} and {FJRW} invariants of quintic
  {C}alabi-{Y}au manifolds.
\newblock {\em J. Differential Geom.}, 120(2):251--306, 2022.

\bibitem{ChenJandaRuan19}
Q.~Chen, F.~Janda, and Y.~Ruan.
\newblock The logarithmic gauged linear sigma model.
\newblock {\em Invent. Math. 225, 1077--1154}, 2021.

\bibitem{Craw04}
A.~Craw.
\newblock An introduction to motivic integration.
\newblock In {\em Strings and geometry}, volume~3 of {\em Clay Math. Proc.},
  pages 203--225. Amer. Math. Soc., Providence, RI, 2004.

\bibitem{Caldararu07}
A.~C\u{a}ld\u{a}raru.
\newblock Non-birational {C}alabi-{Y}au threefolds that are derived equivalent.
\newblock {\em Internat. J. Math.}, 18(5):491--504, 2007.

\bibitem{DaiYoshikawa20}
X.~Dai and K.-I. Yoshikawa.
\newblock Analytic torsion for log-{E}nriques surfaces and {B}orcherds product.
\newblock {\em Forum Math. Sigma}, 10:Paper No. e77, 54, 2022.

\bibitem{DenefLoeser99Inv}
J.~Denef and F.~Loeser.
\newblock Germs of arcs on singular algebraic varieties and motivic
  integration.
\newblock {\em Invent. Math.}, 135(1):201--232, 1999.

\bibitem{DenefLoeserCompositio02}
J.~Denef and F.~Loeser.
\newblock Motivic integration, quotient singularities and the {M}c{K}ay
  correspondence.
\newblock {\em Compositio Math.}, 131(3):267--290, 2002.

\bibitem{efm}
D.~Eriksson, G.~Freixas~i Montplet, and C.~Mourougane.
\newblock {BCOV} invariants of {C}alabi-{Y}au manifolds and degenerations of
  {H}odge structures.
\newblock {\em Duke Math. J. 170(3): 379-454}, 2021.

\bibitem{efm2}
D.~Eriksson, G.~Freixas~i Montplet, and C.~Mourougane.
\newblock On genus one mirror symmetry in higher dimensions and the {BCOV}
  conjectures.
\newblock {\em Forum Math. Pi}, 10:Paper No. e19, 53, 2022.

\bibitem{fl}
H.~Fang and Z.~Lu.
\newblock Generalized {H}odge metrics and {BCOV} torsion on {C}alabi-{Y}au
  moduli.
\newblock {\em J. Reine Angew. Math.}, 588:49--69, 2005.

\bibitem{fly}
H.~Fang, Z.~Lu, and K.-I. Yoshikawa.
\newblock Analytic torsion for {C}alabi-{Y}au threefolds.
\newblock {\em J. Differential Geom.}, 80(2):175--259, 2008.

\bibitem{FlennerKosarew87}
H.~Flenner and S.~Kosarew.
\newblock On locally trivial deformations.
\newblock {\em Publ. Res. Inst. Math. Sci.}, 23(4):627--665, 1987.

\bibitem{MR1983959}
H.~Flenner and M.~Zaidenberg.
\newblock Log-canonical forms and log canonical singularities.
\newblock {\em Math. Nachr.}, 254/255:107--125, 2003.

\bibitem{Fujiki83}
A.~Fujiki.
\newblock On primitively symplectic compact {K}\"{a}hler {$V$}-manifolds of
  dimension four.
\newblock In {\em Classification of algebraic and analytic manifolds ({K}atata,
  1982)}, volume~39 of {\em Progr. Math.}, pages 71--250. Birkh\"{a}user
  Boston, Boston, MA, 1983.

\bibitem{GiventalIMRN}
A.~B. Givental.
\newblock Equivariant {G}romov-{W}itten invariants.
\newblock {\em Internat. Math. Res. Notices}, (13):613--663, 1996.

\bibitem{Givental98}
A.~B. Givental.
\newblock A mirror theorem for toric complete intersections.
\newblock In {\em Topological field theory, primitive forms and related topics
  ({K}yoto, 1996)}, volume 160 of {\em Progr. Math.}, pages 141--175.
  Birkh\"{a}user Boston, Boston, MA, 1998.

\bibitem{GuoJandaRuan17}
S.~Guo, F.~Janda, and Y.~Ruan.
\newblock A mirror theorem for genus two gromov-witten invariant of quintic
  3-fold.
\newblock {\em preprint, arXiv: 1709.07392}, 2017.

\bibitem{GuoJandaRuan18}
S.~Guo, F.~Janda, and Y.~Ruan.
\newblock Structure of higher genus {G}romov-{W}itten invariants of quintic
  3-folds.
\newblock {\em preprint, arXiv: 1812.11908}, 2018.

\bibitem{Kato14}
K.~Kato.
\newblock Heights of motives.
\newblock {\em Proc. Japan Acad. Ser. A Math. Sci.}, 90(3):49--53, 2014.

\bibitem{KnudsenMumford}
F.~F. Knudsen and D.~Mumford.
\newblock The projectivity of the moduli space of stable curves. {I}.
  {P}reliminaries on ``det'' and ``{D}iv''.
\newblock {\em Math. Scand.}, 39(1):19--55, 1976.

\bibitem{KollarMori}
J.~Koll\'{a}r and S.~Mori.
\newblock {\em Birational geometry of algebraic varieties}, volume 134 of {\em
  Cambridge Tracts in Mathematics}.
\newblock Cambridge University Press, Cambridge, 1998.
\newblock With the collaboration of C. H. Clemens and A. Corti, Translated from
  the 1998 Japanese original.

\bibitem{Kontsevich}
M.~Kontsevich.
\newblock Motivic integration.
\newblock {\em Lecture at Orsay}, 1995.

\bibitem{LianLiuYau}
B.~H. Lian, K.~Liu, and S.-T. Yau.
\newblock Mirror principle. {I}.
\newblock {\em Asian J. Math.}, 1(4):729--763, 1997.

\bibitem{Ma-orbifold}
X.~Ma.
\newblock Orbifolds and analytic torsions.
\newblock {\em Trans. Amer. Math. Soc.}, 357(6):2205--2233, 2005.

\bibitem{Ma-orbifold2}
X.~Ma.
\newblock Orbifold submersion and analytic torsions.
\newblock In {\em Arithmetic {L}-functions and differential geometric methods},
  volume 338 of {\em Progr. Math.}, pages 141--177. Birkh\"{a}user/Springer,
  Cham, 2021.

\bibitem{ma-ro}
V.~Maillot and D.~R\"{o}ssler.
\newblock On the birational invariance of the {BCOV} torsion of {C}alabi-{Y}au
  threefolds.
\newblock {\em Comm. Math. Phys.}, 311(2):301--316, 2012.

\bibitem{MorrisonJAMS93}
D.~R. Morrison.
\newblock Mirror symmetry and rational curves on quintic threefolds: a guide
  for mathematicians.
\newblock {\em J. Amer. Math. Soc.}, 6(1):223--247, 1993.

\bibitem{q}
D.~Quillen.
\newblock Determinants of {C}auchy-{R}iemann operators on {R}iemann surfaces.
\newblock {\em Funct. Anal. Appl.}, 19:31--34, 1985.

\bibitem{rs2}
D.~B. Ray and I.~M. Singer.
\newblock Analytic torsion for complex manifolds.
\newblock {\em Ann. of Math. (2)}, 98:154--177, 1973.

\bibitem{Sakai77}
F.~Sakai.
\newblock Kodaira dimensions of complements of divisors.
\newblock In {\em Complex analysis and algebraic geometry}, pages 239--257.
  1977.

\bibitem{Satake56}
I.~Satake.
\newblock On a generalization of the notion of manifold.
\newblock {\em Proc. Nat. Acad. Sci. U.S.A.}, 42:359--363, 1956.

\bibitem{Satake57}
I.~Satake.
\newblock The {G}auss-{B}onnet theorem for {$V$}-manifolds.
\newblock {\em J. Math. Soc. Japan}, 9:464--492, 1957.

\bibitem{MR2846680}
H.~Uehara.
\newblock A counterexample of the birational {T}orelli problem via
  {F}ourier-{M}ukai transforms.
\newblock {\em J. Algebraic Geom.}, 21(1):77--96, 2012.

\bibitem{MR2013783}
J.~W{\l}odarczyk.
\newblock Toroidal varieties and the weak factorization theorem.
\newblock {\em Invent. Math.}, 154(2):223--331, 2003.

\bibitem{Yasuda}
T.~Yasuda.
\newblock Twisted jets, motivic measures and orbifold cohomology.
\newblock {\em Compos. Math.}, 140(2):396--422, 2004.

\bibitem{Yau78}
S.~T. Yau.
\newblock On the {R}icci curvature of a compact {K}\"{a}hler manifold and the
  complex {M}onge-{A}mp\`ere equation. {I}.
\newblock {\em Comm. Pure Appl. Math.}, 31(3):339--411, 1978.

\bibitem{yo06}
K.-I. Yoshikawa.
\newblock Analytic torsion and an invariant of {C}alabi-{Y}au threefold.
\newblock In {\em Differential geometry and physics}, volume~10 of {\em Nankai
  Tracts Math.}, pages 480--489. World Sci. Publ., Hackensack, NJ, 2006.

\bibitem{z2}
Y.~Zhang.
\newblock {BCOV} invariant and blow-up.
\newblock {\em To appear in Compositio Mathematica}.
\newblock arxiv:2003.03805.

\bibitem{z}
Y.~Zhang.
\newblock An extension of {BCOV} invariant.
\newblock {\em International Mathematics Research Notices, rnaa265,}, 2020.

\bibitem{Zinger08}
A.~Zinger.
\newblock Standard versus reduced genus-one {G}romov-{W}itten invariants.
\newblock {\em Geom. Topol.}, 12(2):1203--1241, 2008.

\bibitem{ZingerJAMS}
A.~Zinger.
\newblock The reduced genus 1 {G}romov-{W}itten invariants of {C}alabi-{Y}au
  hypersurfaces.
\newblock {\em J. Amer. Math. Soc.}, 22(3):691--737, 2009.

\end{thebibliography}

\emph{Lie Fu}, \texttt{lie.fu@math.unistra.fr} \\
\textsc{Institut de recherche math\'ematique avanc\'ee (IRMA), Universit\'e de Strasbourg, 7 rue Ren\'e--Descartes, 67084 Strasbourg Cedex, France}\\

\emph{Yeping Zhang}, \texttt{yepingzhang1987@hotmail.com} \\
\textsc{Shanghai, China}

\end{document}